\newcommand{\res}{\!\!\mathop{\hbox{
                                \vrule height 7pt width .5pt depth 0pt
                                \vrule height .5pt width 6pt depth 0pt}}
                                \nolimits}
\newtheorem{theorem}{Theorem}[section]
\newtheorem{lemma}[theorem]{Lemma}
\newtheorem{definition}[theorem]{Definition}
\newtheorem{proposition}[theorem]{Proposition}
\newtheorem{corollary}[theorem]{Corollary}
\newtheorem{remark}[theorem]{Remark}
\newtheorem{example}[theorem]{Example}
\newtheorem*{theorem*}{\it Theorem}
\def\vint_#1{\mathchoice%
          {\mathop{\kern 0.2em\vrule width 0.6em height 0.69678ex depth -0.58065ex
                  \kern -0.8em \intop}\nolimits_{\kern -0.4em#1}}%
          {\mathop{\kern 0.1em\vrule width 0.5em height 0.69678ex depth -0.60387ex
                  \kern -0.6em \intop}\nolimits_{#1}}%
          {\mathop{\kern 0.1em\vrule width 0.5em height 0.69678ex
              depth -0.60387ex
                  \kern -0.6em \intop}\nolimits_{#1}}%
          {\mathop{\kern 0.1em\vrule width 0.5em height 0.69678ex depth -0.60387ex
                  \kern -0.6em \intop}\nolimits_{#1}}}
\def\vintslides_#1{\mathchoice%
          {\mathop{\kern 0.1em\vrule width 0.5em height 0.697ex depth -0.581ex
                  \kern -0.6em \intop}\nolimits_{\kern -0.4em#1}}%
          {\mathop{\kern 0.1em\vrule width 0.3em height 0.697ex depth -0.604ex
                  \kern -0.4em \intop}\nolimits_{#1}}%
          {\mathop{\kern 0.1em\vrule width 0.3em height 0.697ex depth -0.604ex
                  \kern -0.4em \intop}\nolimits_{#1}}%
          {\mathop{\kern 0.1em\vrule width 0.3em height 0.697ex depth -0.604ex
                  \kern -0.4em \intop}\nolimits_{#1}}}
\def\R{\mathbb R}
\def\N{\mathbb N}
\numberwithin{equation}{section}
\def\1{\raisebox{2pt}{\rm{$\chi$}}}
\def\z{{\bf z}}
\def\w{{\bf w}}
\def\e{{\bf e}}
\def\v{{\bf v}}
\renewcommand{\v}{\mathrm{v}}
\newcommand{\vf}{\mathrm{f}}
\newcommand{\vi}{\mathrm{i}}
\newcommand{\V}{\mathrm{V}}
\newcommand{\E}{\mathrm{E}}
\numberwithin{equation}{section}
\def\1{\raisebox{2pt}{\rm{$\chi$}}}
\definecolor{violet(ryb)}{rgb}{0.53, 0.0, 0.69}
\definecolor{internationalorange}{rgb}{1.0, 0.31, 0.0}
\begin{document}

\title[The Total Variation Flow in Metric Graphs ]{\bf The Total Variation Flow in Metric Graphs }

\author[J. M. Maz\'on]{Jos\'e M. Maz\'on}

 \address{ J. M. Maz\'{o}n: Departamento de An\'{a}lisis Matem\'{a}tico,
Univ. Valencia, Dr. Moliner 50, 46100 Burjassot, Spain.
 {\tt mazon@uv.es}}

\keywords{Metric graphs, functions of total variation, total variation flow, the $1$-Laplacian, asymptotic
behaviour.\\
\indent 2010 {\it Mathematics Subject Classification: 05C21, 47J35, 35K67.}
}

\setcounter{tocdepth}{1}

\date{\today}

\begin{abstract}
Our aim is to study the Total Variation Flow in Metric Graphs. First, we define the functions of bounded
variation in Metric Graphs and  their total variation, we also give an integration by parts formula. We
prove existence and uniqueness of solutions and that the solutions
  reach the  mean of the initial
data in finite time.
 Moreover, we obtain explicit solutions.
\end{abstract}

\maketitle

{\it Dedicated to the memory  of Ireneo Peral.}

{ \renewcommand\contentsname{Contents}
\setcounter{tocdepth}{3}
\addtolength{\parskip}{-0.2cm}
{\small \tableofcontents}
\addtolength{\parskip}{0.2cm} }

\section{Introduction and Preliminaries}

Metric graphs  are widely used to model a wide range of problems in chemistry, physics, or engineering,
describing quasi-one-dimensional systems such as carbon nano-structures,
quantum wires, transport networks, or thin waveguides. Concerning  the applications in biology,
we can cite, for instance, the recent works \cite{DLPZ} and \cite{JPS}. They are also widely studied
in mathematics; see \cite{BK} and \cite{Mugnolo}  for an overview.

 One of the earliest accounts of a partial differential equation set on a metric graph can be found in the
 $1980$ work of Lumer (\cite{Lumer}) on ramification spaces. Since then, the theory
has seen considerable developments, due, in particular, to the natural appearance of graphs in the
modeling
of various physical situations. Among the partial differential equation problems set on metric graphs, one
has become increasingly popular: the ones set on quantum graphs.
By quantum graphs one usually refers to a metric graph $\Gamma  = (V, E)$ equipped with a
differential operator $H$ often referred to as the Hamiltonian. The most popular example of a Hamiltonian
is $- \Delta$ on the edges with Kirchhoff conditions. The book of Berkolaiko and Kuchment \cite{BK}
provides an
excellent introduction to the theory of quantum graphs. In the last years, we have had a great development
of other important topics like: the  wave equation in metric graphs related with control problems (see
survey book \cite{DZ}) and  nonlinear quantum graphs associated with  the nonlinear evolution equation of
Schr\"{o}dinger type (see  the survey paper \cite{Noja}). Now, to our knowledge, there is very little
literature on nonlinear evolution problems in metric graphs, such as for the $p$-Laplacian operator.

The aim of this paper is to analyse the initial-boundary value problem associated with the total variation
flow in  metric graphs.
In this regard, we introduce the $1$-Laplacian operator associated with a metric graph.  We then proceed
to prove existence and uniqueness of solutions of the total variation flow in metric graphs for data in
$L^2(\Gamma)$ and to study their asymptotic behaviour, showing that the solutions reach the stationary
state in finite time. Furthermore, we obtain explicit solutions.

From the mathematical point of view, the study of the total variation flow in Euclidean spaces  was
carried out using, as its main tools, the classical theory of maximal monotone operators due to Brezis
(\cite{Brezis}) and the Crandall-Liggett Theorem (\cite{CrandallLiggett}, \cite{Barbu}), being the energy
space the space of function of bounded variation.  In order to  characterize the solutions,  the Green's
formula shown by Anzelotti in \cite{Anzellotti}  proved to be crucial (see \cite{ABCM0}, \cite{ABCM} and
\cite{ACMBook} for a survey).  The study of  a similar problem in the general framework  of metric random
walk spaces, which have as important particular cases the weighted graphs and nonlocal problems with
non--singular kernels, was done in \cite{MST1}.

Here, we use similar tools, so we introduce the space of bounded variation  functions in metric graphs and
we establish a Green's formula in order to characterize the $1$-Laplacian operator in metric graphs. Let
me point out  the importance of giving an adequate definition of the total variation of a bounded
variation function in the context of metric graphs that takes into account its structure and measures the
jumps in the vertices.

\subsection{Metric  Graphs} We  recall here some basic knowledge about metric graphs, see for instance
\cite{BK} and the references therein.

A graph $\Gamma$ consists of a finite or countable infinite set
of vertices $\V(\Gamma)=\{\v_i\}$ and a set of edges $\E(\Gamma)=\{\e_j\}$ connecting the
vertices. A graph $\Gamma$ is said to be a finite graph if the number of edges
and the number of vertices are finite.
 An edge and a vertex on
that edge are called incident. We will denote $\v\in \e$ when the edge $\e$ and the vertex $\v$ are
incident.
We define $\E_{\v}(\Gamma)$ as the set of all edges incident to $\v$, and the  {\it degree} of $\v$ as
$d_\v:=  \sharp \E_{\v}(\Gamma)$. We define the {\it boundary} of $V(\Gamma)$ as
$$\partial V(\Gamma):= \{ \v \in V(\Gamma) \ : \ d_\v =1 \},$$
and its {\it interior} as
$${\rm int}( V(\Gamma)) := \{ \v \in V(\Gamma) \ : \ d_\v > 1 \}.$$

We will assume the absence of loops,
since if these are present, one can break them into pieces by introducing
new intermediate vertices. We also assume the absence of multiple edges.

A {\it walk} is a sequence of edges $\{\e_1,\e_2,\e_3,\dots\}$ in which, for each $i$ (except the last),
the end of  $\e_i$  is
the beginning of $\e_{i+1}$. A {\it trail} is a walk in which no edge is repeated.
A {\it path} is a trail in which no vertex is repeated.

From now on we will deal with a connected, compact and metric graph~$\Gamma$:

\noindent $\bullet$ A  graph $\Gamma$ is a metric graph if
\begin{enumerate}
	\item each edge $\e$ is assigned  with a positive length $\ell_{\e}\in]0,+\infty];$

\item   for each edge $\e$, a coordinate is assigned to each point of it, including its vertices. For
    that purpose, each   edge $\e$ is identified with an ordered pair
$(\vi_{\e},\vf_{\e})$ of vertices, being $\vi_{\e}$ and $\vf_{\e}$ the
initial and terminal
vertex of $\e$ respectively, which has no  sense of meaning when travelling along the path  but allows
us to define  coordinates by means of an increasing function
	$$
	\begin{array}{rlcc}
 c_\e:&\e&\to& [0,\ell_\e]\\
 &x&\rightsquigarrow& x_{\e}
 \end{array}
$$
such that, letting $c_\e(\vi_\e):=0$ and $c_\e(\vf_\e):=\ell_{\e}$, it is exhaustive;  $x_{\e}$ is
called the coordinate of the point $x\in \e$.
\end{enumerate}

\noindent $\bullet$ A graph is said to be
connected if a path exists between every pair of vertices, that is, a graph which is
connected in the usual topological sense.

\noindent  $\bullet$ A compact metric graph is a finite metric graph whose edges  all have finite
length.

\medskip

If a sequence of edges $\{\e_j\}_{j=1}^n$  forms a path, its length is
defined as $\sum_{j=1}^n\ell_{\e_j}.$  The length of a metric graph, denoted $\ell(\Gamma)$, is the sum of
the length of all its edges. Sometime we identify $\Gamma$ with $$\Gamma \equiv\bigcup_{\e \in E(\Gamma)}
\e.$$

Given a set $A \subset \Gamma$, we define its {\it length} as
$$\ell (A):= \sum_{ \e \in E(\Gamma), A \cap \e \not=\emptyset} \mathcal{L}^1( c_{\e}(A \cap \e)).$$

For two vertices $\v$ and $\hat\v,$ the distance between $\v$ and $\hat \v$, $d_\Gamma(\v,\hat \v)$, is
defined as the
minimal length of the paths connecting them.  Let us be more precise and consider $x$, $y$ two points in
the graph $\Gamma$.

-if $x,y\in\e$ (they belong to the same edge, note that they can be vertices), we define {\it the
distance-in-the-path-$\e$}   between $x$ and $y$  as $$\hbox{dist}_{\e}(x,y):=|y_\e-x_\e|;$$

-if $x\in \e_a$, $y\in \e_b$, let $P=\{\e_a,\e_1,\dots,\e_{n},\e_b\}$ be a path ($n\ge 0$) connecting
them.
 Let us call $\e_{0} = \e_a$ and $\e_{n+1}= \e_b$. Following the definition given above for a path, set
 $\v_{0}$ the vertex that is the end of  $\e_0$  and
the beginning of  $\e_{1}$ (note that these vertices need not be the  terminal and the initial vertices of
the edges that are taken into account), and   $\v_{n}$   the vertex that is the end of  $\e_n$ and the
beginning of $\e_{n+1}$.
We will say that   the {\it distance-in-the-path-$P$} between $x$ and $y$ is equal to
$$\hbox{dist}_{\e_0}(x,\v_0)+ \sum_{1\le j\le n}\ell_{\e_j}+ \hbox{dist}_{\e_{n+1}}(\v_n,y) .$$
We define the distance between $x$ and $y$, that we will denote by $d_\Gamma(x,y)$, as the infimum  of all
the {\it distances-in-paths} between $x$ and $y$, that is,
$$
\begin{array}{lr}d_\Gamma(x,y)&= \inf\Big\{ \hbox{dist}_{\e_0}(x,\v_0)+ \sum_{1\le j\le n}\ell_{\e_j}+
\hbox{dist}_{\e_{n+1}}(\v_n,y):\qquad\qquad \\[10pt] & \qquad\qquad \hbox{
$\{\e_0,\e_1,\dots,\e_{n},\e_{n+1}\}$   path connecting $x$ and $y$} \Big\}.
\end{array}
$$

 We remark that the distance between two points $x$ and $y$ belonging to the same edge $\e$ can be
 strictly smaller than $|y_\e-x_\e|$. This happens when there is a path connecting them (using  more edges
 than $\e$) with length smaller than $|y_\e-x_\e|$.

A function $u$ on a metric graph $\Gamma$ is a collection of functions $[u]_{\e}$
defined on
$]0,\ell_{\e}[$ for all $\e\in \E(\Gamma),$
not just at the vertices as in discrete models.

Throughout this work, $ \int_{\Gamma} u(x)  dx$ or  $ \int_{\Gamma} u$ denotes
$ \sum_{\e\in \E(\Gamma)} \int_{0}^{\ell_{\e}} [u]_{\e}(x_\e)\, dx_\e$. Note that given $\Omega \subset
\Gamma$, we have
$$\ell(\Omega) = \int_\Gamma \1_\Omega dx.$$

Let $1\le p\le +\infty.$ We say that $u$ belongs to  $L^p(\Gamma)$ if
$[u]_{\e}$ belongs to $L^p(]0,\ell_{\e}[)$ for all $\e\in \E(\Gamma)$ and
\[
	\|u\|_{L^{p} (\Gamma)}^p\coloneqq\sum_{\e\in \E(\Gamma)}
	\|[u]_{\e}\|_{L^{p}(0,\ell_{\e})}^p<+\infty.
\]
 The Sobolev space $W^{1,p}(\Gamma)$ is defined as
the space of  functions $u$ on $\Gamma$ such  that
$[u]_{\e}\in W^{1,p}(0,\ell_{\e})$
for all $\e\in \E(\Gamma)$ and
\[
	\|u\|_{W^{1,p}(\Gamma)}^p\coloneqq \sum_{\e\in \E(\Gamma)}
	\|[u]_{\e}\|_{L ^p(0,\ell_{\e})}^p+\|[u]_{\e}{}^\prime\|_{L ^p(0,\ell_{\e})}^p<+\infty.
\]
The space $W^{1,p}(\Gamma)$ is a Banach space for $1 \le p \le\infty$.
It is reflexive for $1 < p < \infty$ and separable for $1 \le p < \infty.$
Observe that in the definition of $W^{1,p}(\Gamma)$ we does not assume the continuity at the vertices.

A quantum graph is a metric graph $\Gamma$ equipped with a
differential operator
acting on the edges together with vertex conditions.
In this work, we will consider the $1-$Laplacian differential operator given by
\[
	\Delta_1 u(x):=
	\left(\frac{ u^{\prime}(x)}{|u^{\prime}(x)|}\right)^{\prime},
\]
on each edge.

\section{The total variation flow in metric graphs}

In this section we will assume that $\Gamma$ is a finite, compact and connected metric graph. To introduce
the total variation flow in the metric graph $\Gamma$, we first need to study the bounded variation
functions in $\Gamma$ and  to get a Green's formula in $\Gamma$ analogue to the classical Anzellotti
Green's formula.
\subsection{BV functions and integration by parts}

For bounded variation functions of one variable we follow \cite{AFP}.
Let $I \subset \R$  be an interval, we say that a
function $u \in L^1(I)$ is of bounded variation if its
distributional derivative $Du$ is a Radon measure on $I$ with
bounded total variation $\vert Du \vert (I) < + \infty$.
 We denote
by $BV(I)$ the space of all functions of bounded variation in $I$.
It is well known (see \cite{AFP}) that given $u \in BV(I)$ there
exists $\overline{u}$ in the equivalence class of $u$, called a
good representative of $u$, with the following properties. If $J_u$
is the set of atoms of $Du$, i.e., $x \in J_u$ if and only if $Du(\{
x \}) \not= 0$, then $\overline{u}$ is continuous in $I \setminus
J_u$ and has a jump discontinuity at any point of $J_u$:
$$\overline{u}(x_{-})  := \lim_{y \uparrow x}\overline{u}(y) = Du(]a,x[), \ \ \ \
\ \overline{u}(x_{+}) := \lim_{y \downarrow x}\overline{u}(y) =
Du(]a,x]) \ \ \ \forall \, x \in J_u,$$ where by simplicity we are
assuming that $I = ]a,b[$. Consequently,
$$\overline{u}(x_{+}) - \overline{u}(x_{-})
 = Du(\{ x \}) \ \ \ \forall \, x \in J_u.$$
Moreover, $\overline{u}$ is differentiable at ${\mathcal L}^1$
     a.e. point of $I$, and the derivative $\overline{u}'$ is the
density of $Du$ with respect to ${\mathcal L}^1$.
For $u \in BV(I)$, the  measure $Du$ decomposes into its absolutely continuous and
singular parts $Du = D^a u + D^s u$. Then $D^a u = \overline{u}' \ {\mathcal L}^1$. We
also split $D^su$ in two parts: the {\it jump} part $D^j u$ and the {\it Cantor} part
$D^c u$.

 It is well known (see for instance \cite{AFP}) that
$$D^j u = Du \res J_u = \sum_{x \in J_u} \overline{u}(x_{+}) - \overline{u}(x_{-}),$$
and also,
$$\vert Du \vert (I) = \vert D^au \vert (I) + \vert D^j u \vert (I) + \vert D^c u \vert
(I) $$ $$= \int_a^b \vert \overline{u}'(x) \vert \, dx + \sum_{x
\in J_u} \vert \overline{u}(x_{+}) - \overline{u}(x_{-}) \vert +
\vert D^c u \vert (I).$$
Obviously, if $u \in BV(I)$ then $u \in W^{1,1}(I)$ if and only if
$D^su \equiv 0$, and in this case we have $Du = \overline{u}' \
{\mathcal L}^1$.

A measurable subset $E \subset I$ is a set of {\it finite perimeter} in $I$ if $\1_E \in BV(I)$, and its
perimeter is defined as
$${\rm Per}(E, I):= \vert D \1_E \vert (I).$$

From now on, when we deal with point-wise valued  $BV$-functions we  shall always use the
good representative. Hence, in the case $u \in W^{1,1}(I)$, we shall assume that  $u \in
C(\overline{I})$.

Given $\z \in W^{1,2}(]a,b[)$ and $u \in BV(]a,b[)$, by $\z Du$ we mean the Radon measure
in $]a,b[$ defined as
$$\langle \varphi, \z Du \rangle := \int_a^b \varphi \z \, Du \ \
\ \ \ \ \forall \, \varphi \in C_c(]a,b[).$$

Note that if $\varphi \in \mathcal{D}(]a,b[):= C^\infty_c(]a,b[)$, then
$$\langle \varphi, \z Du \rangle = - \int_a^b u \z^{\prime} \varphi dx - \int_a^b u \z \varphi^{\prime}
dx,$$
which is  the definition given by Anzellotti in \cite{Anzellotti}.

Working as in \cite[Corollary 1.6]{Anzellotti}, it is easy to see that
\begin{equation}\label{boound}
\vert \z Du \vert (B) \leq \Vert \z \Vert_{L^{\infty}(]a,b[)} \vert Du \vert (B) \quad \hbox{for all
Borelian} \ B \subset ]a,b[.
\end{equation}

Then, $\z Du$ is absolutely continuous  with  respect to the measure $\vert Du \vert$, and we will denote
by $\theta(\z, Du, x)$ the Radom-Nikodym derivative of $\z Du$  with  respect to $\vert Du \vert$, that
is
 $$\int_a^b \z Du = \int_a^b \theta(\z, Du, x) d \vert Du \vert (x).$$ Working as in \cite[Proposition
 2.8]{Anzellotti}, we have that if $f \in C^1(\R)$ is an increasing function, then
\begin{equation}\label{compp}
\theta(\z, D(f(u)), x) = \theta(\z, Du, x) \quad \vert Du \vert-\hbox{a.e. in} \ ]a,b[.
\end{equation}

The next result was proved in \cite{Anzellotti} in $\R^N$, with $N \geq 2$. We can adapt the proof for
$N=1$. For convenience, we give here the details.

\begin{proposition}\label{megusta1} Let $\z_n \in W^{1,2}(]a,b[)$. If
$$\lim_{n \to \infty}\z_n = \z \quad \hbox{weakly$^*$ in} \ L^\infty (]a,b[),$$
and
$$\lim_{n \to \infty}\z^{\prime}_n = \z^{\prime} \quad \hbox{weakly in} \ L^1 (]a,b[),$$
then for every $u \in BV(]a,b[)$, we have
\begin{equation}\label{EE1}
\z_n Du \to \z Du \quad \hbox{as measures},
\end{equation}
and
\begin{equation}\label{EE2}
\lim_{n \to \infty} \int_a^b \z_n Du = \int_a^b\z Du.
\end{equation}
\end{proposition}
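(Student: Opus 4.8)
The plan is to prove \eqref{EE1} first and then derive \eqref{EE2} as the special case obtained by testing against $\varphi\equiv 1$ (after a routine truncation, since $]a,b[$ is bounded but $1$ is not compactly supported). For \eqref{EE1}, fix $\varphi\in C_c(]a,b[)$; we must show $\int_a^b\varphi\,\z_n\,Du\to\int_a^b\varphi\,\z\,Du$. The natural idea is to decompose $Du=D^au+D^cu+D^ju$ and handle the absolutely continuous part by the weak$^*$ convergence $\z_n\rightharpoonup\z$ in $L^\infty$ directly, since $\varphi\,\overline u'\in L^1(]a,b[)$. The difficulty is the singular part $D^su$: weak$^*$ convergence in $L^\infty$ tested against an $L^1$ function does not see a measure singular with respect to $\mathcal L^1$, so the convergence against $D^su$ must come from somewhere else — and that is exactly where the hypothesis $\z_n'\rightharpoonup\z'$ in $L^1$ enters.

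First I would establish, via the density of $\mathcal D(]a,b[)$ in $C_c(]a,b[)$ together with the uniform bound \eqref{boound} (note $\sup_n\|\z_n\|_{L^\infty}<\infty$ by the uniform boundedness principle, since a weak$^*$ convergent sequence is bounded), that it suffices to test against $\varphi\in\mathcal D(]a,b[)$. For such $\varphi$, use the Anzellotti-type identity recorded in the excerpt,
\[
\langle\varphi,\z_n Du\rangle=-\int_a^b u\,\z_n'\,\varphi\,dx-\int_a^b u\,\z_n\,\varphi'\,dx.
\]
Now each term converges: in the first term $u\varphi\in L^\infty(]a,b[)\subset (L^1)^\ast$ pairs against $\z_n'\rightharpoonup\z'$ in $L^1$; in the second term $u\varphi'\in L^1(]a,b[)$ pairs against $\z_n\rightharpoonup\z$ weakly$^*$ in $L^\infty$. (Here $u\in BV\subset L^\infty$ on the bounded interval, and $\varphi,\varphi'$ are bounded with compact support, so the needed integrability is immediate.) Passing to the limit gives $\langle\varphi,\z_n Du\rangle\to -\int_a^b u\,\z'\varphi-\int_a^b u\,\z\varphi' = \langle\varphi,\z Du\rangle$, which is \eqref{EE1}.

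For \eqref{EE2}, I would approximate $\1_{]a,b[}$ from below by a sequence $\varphi_k\in C_c(]a,b[)$ with $0\le\varphi_k\le 1$ and $\varphi_k\uparrow 1$ on $]a,b[$. Using \eqref{boound} for $\z_n$ (uniformly in $n$) and for $\z$, the tails $\int_a^b(1-\varphi_k)\,\z_n\,Du$ and $\int_a^b(1-\varphi_k)\,\z\,Du$ are bounded by $(\sup_n\|\z_n\|_\infty\vee\|\z\|_\infty)\,|Du|(\{1-\varphi_k>0\})$, which tends to $0$ as $k\to\infty$ uniformly in $n$ since $|Du|$ is a finite measure. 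Combining this uniform smallness of the tails with the convergence $\int_a^b\varphi_k\,\z_n\,Du\to\int_a^b\varphi_k\,\z\,Du$ for each fixed $k$ (which is \eqref{EE1}), a standard $\varepsilon/3$ argument yields \eqref{EE2}. The main obstacle, as noted, is conceptual rather than technical: recognizing that the singular part of $Du$ is controlled only through the $L^1$-weak convergence of the derivatives via the integration-by-parts identity, not through the $L^\infty$-weak$^*$ convergence of the $\z_n$ themselves; once the identity is invoked the rest is routine.
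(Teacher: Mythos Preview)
Your proposal is correct and follows essentially the same approach as the paper: reduce \eqref{EE1} to smooth test functions via density in $C_c$ and the uniform bound $\sup_n\|\z_n\|_\infty<\infty$, then pass to the limit in the integration-by-parts identity using the two weak convergence hypotheses; for \eqref{EE2}, use a cutoff $\varphi$ close to $\1_{]a,b[}$ and control the remainder by the uniform bound and the finiteness of $|Du|$. The paper's argument for \eqref{EE2} phrases the cutoff step slightly differently (choosing a single $\varphi$ once $\epsilon$ is fixed rather than a sequence $\varphi_k\uparrow 1$), but the content is identical.
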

\begin{proof} We have
$$M:=\sup_{n \in \N} \Vert \z_n \Vert_\infty < \infty, \quad \hbox{and then} \quad \Vert \z \Vert_\infty
\leq M.$$
Then,
$$\left\vert \int_a^b \z_n Du \right\vert \leq M \int_a^b \vert Du \vert.$$
Thus, to verify that \eqref{EE1} holds; that is,
\begin{equation}\label{EE3}
\lim_{n \to \infty} \int_a^b  \varphi \z_n Du = \int_a^b  \varphi \z Du
\end{equation}
for every $\varphi \in C_c(]a,b[)$, it is sufficient to check this limit for  test functions $\varphi \in
\mathcal{D}(]a,b[)$. Now, for $\varphi \in \mathcal{D}(]a,b[)$,
$$ \int_a^b  \varphi \z_n Du  = - \int_a^b u \z_n^{\prime} \varphi dx - \int_a^b u \z_n \varphi^{\prime}
dx \to - \int_a^b u \z^{\prime} \varphi dx - \int_a^b u \z \varphi^{\prime} dx =  \int_a^b  \varphi \z
Du,$$
which proves \eqref{EE1}. Let us prove now \eqref{EE2}.  Given $\epsilon >0$, since $\vert Du \vert$ is a
bounded Radon measure, there exists an open subset $U \subset ]a,b[$ such that
\begin{equation}\label{EE4}
\int_{]a,b[ \setminus U} \vert Du \vert \leq \frac{\epsilon}{4M}
\end{equation}
and for every $\varphi \in \mathcal{D}(]a,b[)$, there exists $N \in \N$ such that
\begin{equation}\label{EE5}
\left\vert \int_a^b  \varphi \z_n Du - \int_a^b  \varphi \z Du \right\vert < \frac{\epsilon}{2}, \quad
\forall \ n \geq N.
\end{equation}
Now, we choose  $\varphi \in \mathcal{D}(]a,b[)$ such that $0 \leq \varphi \leq1$, $\varphi \equiv 1$ on
$\overline{U}$. Then, by \eqref{EE4} and \eqref{EE5}, for all  $n \geq N$, we have
$$\left\vert \int_a^b  \z_n Du - \int_a^b \z Du \right\vert  \leq \left\vert \int_a^b  \varphi \z_n Du -
\int_a^b  \varphi \z Du \right\vert + \int_a^b (1 - \varphi) \vert \z_n Du \vert  + \int_a^b (1 - \varphi)
\vert \z Du \vert  $$ $$ \leq \frac{\epsilon}{2} + \int_{]a,b[ \setminus U} \vert \z_n Du \vert +
\int_{]a,b[ \setminus U} \vert \z_n Du \vert \leq \frac{\epsilon}{2} +2 M \int_{]a,b[ \setminus U} \vert
Du \vert \leq \epsilon$$
proving  \eqref{EE2}.

\end{proof}

 We need the following
integration by parts formula, which can be proved using a suitable
regularization of $u \in BV(I)$ as in the proof of \cite[Theorem 1.9]{Anzellotti} (see also  Theorem C.9.
of
\cite{ACMBook}).

\begin{lemma}\label{IntBP} If $\z \in W^{1,2}(]a,b[)$ and $u \in
BV(]a,b[)$, then
\begin{equation}\label{EIntBP}
\int_a^b \z Du + \int_a^b u(x) \z^{\prime}(x) \, dx = \z(b) u(b_{-})- \z(a) u(a_{+}).
\end{equation}
\end{lemma}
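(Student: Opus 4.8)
The plan is to prove the integration by parts formula \eqref{EIntBP} by a density/regularization argument, reducing to the smooth case where it is just the ordinary product rule. First I would treat the model case $u \in W^{1,1}(]a,b[)$ (so $Du = u' \, \mathcal{L}^1$ and $\z Du = \z u' \, \mathcal{L}^1$): here $\z u \in W^{1,1}(]a,b[)$ since both factors are in $W^{1,1}$ with one of them bounded (recall $\z \in W^{1,2}(]a,b[) \subset W^{1,\infty}(]a,b[) \subset C([a,b])$ in dimension one), and $(\z u)' = \z' u + \z u'$ in the distributional sense. Integrating this identity over $]a,b[$ and using the fundamental theorem of calculus for $W^{1,1}$ functions — whose good representative is absolutely continuous, so $\int_a^b (\z u)' = (\z u)(b) - (\z u)(a) = \z(b) u(b_-) - \z(a) u(a_+)$ — gives \eqref{EIntBP} in this case.

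For general $u \in BV(]a,b[)$, the strategy is to regularize $u$ so as to land back in $W^{1,1}$ while controlling the relevant quantities. I would fix a good mollifying procedure (e.g. mollification after an even reflection, or the construction used in \cite[Theorem 1.9]{Anzellotti} / Theorem C.9 of \cite{ACMBook}) producing $u_n \in C^\infty \cap W^{1,1}(]a,b[)$ with $u_n \to u$ in $L^1(]a,b[)$, $Du_n \rightharpoonup Du$ weakly$^*$ as measures, $|Du_n|(]a,b[) \to |Du|(]a,b[)$, and — crucially — the boundary values converging, $u_n(a) \to u(a_+)$ and $u_n(b) \to u(b_-)$ (this is where the choice of regularization matters: one must arrange that the smoothing does not destroy the one-sided traces; taking the good representative and mollifying with a kernel supported to the appropriate side near each endpoint, or the standard interior-translate-plus-mollify trick, achieves this). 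Writing \eqref{EIntBP} for each $u_n$ — legitimate by the $W^{1,1}$ case — I would then pass to the limit in all three terms: the right-hand side converges by the boundary-value convergence, the term $\int_a^b u_n \z' \, dx \to \int_a^b u \z' \, dx$ by $L^1$ convergence of $u_n$ together with $\z' \in L^\infty$ (indeed $\z' \in L^2 \subset L^1$ and $u_n$ bounded in... — more simply $u_n \to u$ in $L^1$ and $\z' \in L^\infty$ suffices), and the term $\int_a^b \z Du_n \to \int_a^b \z Du$ either directly from $Du_n \rightharpoonup Du$ and the strict convergence $|Du_n|(]a,b[)\to |Du|(]a,b[)$ (Reshetnyak-type continuity, or an $\varepsilon$-argument exactly as in the proof of Proposition \ref{megusta1}), using $\z$ continuous and bounded.

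The main obstacle is the third limit, $\int_a^b \z Du_n \to \int_a^b \z Du$: weak$^*$ convergence of the measures $Du_n$ alone does not suffice because $\z Du$ is only defined as the Radon–Nikodym framework of Proposition \ref{megusta1} and $]a,b[$ is not compact, so mass can escape to the endpoints. This is precisely the difficulty handled by the $\varepsilon$-truncation argument in Proposition \ref{megusta1}, which I would mimic: fix an open $U \Subset ]a,b[$ carrying almost all the mass of $|Du|$, use weak$^*$ convergence against a cutoff $\varphi$ equal to $1$ on $\overline{U}$, and control the remainder by $\Vert \z \Vert_\infty |Du_n|(]a,b[ \setminus U)$, which is small provided the regularization also gives $|Du_n|(]a,b[ \setminus U) \to |Du|(]a,b[\setminus U)$ — again a property of the chosen mollification. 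A cleaner alternative, if one is willing to cite it, is to invoke the already-available Proposition \ref{megusta1} in a dual form, or to first prove \eqref{EIntBP} on every compact subinterval $[a+\delta, b-\delta]$ (where everything is classical after regularization) and then let $\delta \downarrow 0$, using that $\z Du$ is a finite measure on $]a,b[$ and that $u(a+\delta) \to u(a_+)$, $u(b-\delta) \to u(b_-)$ for the good representative. Either route reduces the whole statement to the one-dimensional product rule plus measure-theoretic bookkeeping; no genuinely new idea beyond the tools already set up is needed.
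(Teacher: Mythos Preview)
Your approach --- regularize $u$ by mollification, apply the $W^{1,1}$ product rule, and pass to the limit --- is precisely what the paper indicates (it does not write out a proof, but points to the regularization in \cite[Theorem~1.9]{Anzellotti} and \cite[Theorem~C.9]{ACMBook}). Two small slips to fix: the embedding $W^{1,2}(]a,b[)\subset W^{1,\infty}(]a,b[)$ is false (only $W^{1,2}\hookrightarrow C([a,b])$ holds), and consequently $\z'\in L^2$ but not $L^\infty$ in general; for the limit $\int_a^b u_n\z'\to\int_a^b u\z'$ use instead that $u\in BV(]a,b[)\subset L^\infty(]a,b[)$ and mollification preserves the $L^\infty$ bound, so $u_n\to u$ in $L^2$ by interpolation, which pairs with $\z'\in L^2$.
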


\begin{definition}{\rm
We define the set of {\it bounded variation functions} in $\Gamma$ as
$$BV(\Gamma):= \{ u \in L^1(\Gamma) \ : \ [u]_{\e}\in BV(]0,\ell_{\e}[) \ \hbox{for all} \ \e\in \E(\Gamma)
\}.$$

Given $u \in BV(\Gamma)$, for $\e \in E_\v$, we define
$$[u]_\e(\v) := \left\{ \begin{array}{ll} [u]_\e(0+), \quad &\hbox{if} \ \ \v = \vi_{\e} \\[10pt]
[u]_\e(\ell_\e-), \quad &\hbox{if} \ \ \v = \vf_{\e}. \end{array}  \right.$$

For $u \in BV(\Gamma)$, we define
$$\vert D u \vert (\Gamma):=  \sum_{\e\in \E(\Gamma)} \vert D [u]_{\e}  \vert(]0,\ell_{\e}[).$$
We also write $$\vert D u \vert (\Gamma) =\int_{\Gamma} |Du|.$$

}
\end{definition}

Obviously, for $u \in BV(\Gamma)$, we have
\begin{equation}\label{const}
 \vert D u \vert (\Gamma)= 0 \ \iff \ [u]_\e \ \hbox{is constant in} \ ]0, \ell_\e[, \ \ \forall \, \e \in
 E(\Gamma).
\end{equation}

$BV(\Gamma)$ is a Banach space  with  respect to the norm

	$$\|u\|_{BV(\Gamma)}\coloneqq \Vert u \Vert_{L^1(\Gamma)} +  \vert D u \vert (\Gamma).$$

\begin{remark}{\rm
Note that we do not include a continuity condition at the vertices as in the definition of the spaces
$BV(\Gamma)$.  This is due to the fact that, if we include the continuity in the vertices, then typical
functions of bounded variation such as the functions of the form $\1_D$ with $D \subset \Gamma$ such that
$\v \in D$, being $\v$ a common vertex to two edges, would not be elements of $BV(\Gamma)$.
}
$\blacksquare$
\end{remark}

By the Embedding Theorem for $BV$-functions (cf. \cite[Corollary 3.49, Remark 3.30]{AFP}), we have the
following result.
\begin{theorem}\label{embedding} The embedding $BV(\Gamma) \hookrightarrow L^p(\Gamma)$ is continuous for
$1\leq p \leq \infty$, being compact for $1 \leq p < \infty$.
\end{theorem}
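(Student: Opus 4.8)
The plan is to reduce the statement on the metric graph $\Gamma$ to the classical one-dimensional embedding theorem on each edge, being careful about how the norms assemble across the finitely many edges. Since $\Gamma$ is a finite compact connected metric graph, $E(\Gamma)$ is a finite set $\{\e_1,\dots,\e_m\}$ and each edge is identified via its coordinate with a bounded interval $]0,\ell_{\e}[$. For a function $u\in BV(\Gamma)$ we have by definition $[u]_{\e}\in BV(]0,\ell_{\e}[)$ for every $\e$, with $\|u\|_{BV(\Gamma)}=\|u\|_{L^1(\Gamma)}+\sum_{\e}|D[u]_{\e}|(]0,\ell_{\e}[)$, and $\|u\|_{L^p(\Gamma)}^p=\sum_{\e}\|[u]_{\e}\|_{L^p(0,\ell_{\e})}^p$.

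First I would invoke the classical $BV$ embedding on a bounded interval: for each fixed edge $\e$, since $]0,\ell_{\e}[$ is a bounded interval, the inclusion $BV(]0,\ell_{\e}[)\hookrightarrow L^p(]0,\ell_{\e}[)$ is continuous for $1\le p\le\infty$ and compact for $1\le p<\infty$ (this is exactly \cite[Corollary 3.49, Remark 3.30]{AFP}, or the one-dimensional case can be seen directly since a good representative of a $BV$ function on a bounded interval is bounded, with $\|\overline{u}\|_{\infty}\le C_{\e}\|u\|_{BV(]0,\ell_{\e}[)}$). Thus there is a constant $C_{\e}$ with $\|[u]_{\e}\|_{L^p(0,\ell_{\e})}\le C_{\e}\|[u]_{\e}\|_{BV(]0,\ell_{\e}[)}$. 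For continuity of the embedding on $\Gamma$, I would then sum: setting $C:=\max_{1\le j\le m} C_{\e_j}$ (finite since $m<\infty$), for $1\le p<\infty$ one gets $\|u\|_{L^p(\Gamma)}^p=\sum_{\e}\|[u]_{\e}\|_{L^p(0,\ell_{\e})}^p\le C^p\sum_{\e}\|[u]_{\e}\|_{BV(]0,\ell_{\e}[)}^p\le C^p\big(\sum_{\e}\|[u]_{\e}\|_{BV(]0,\ell_{\e}[)}\big)^p=C^p\|u\|_{BV(\Gamma)}^p$, so $\|u\|_{L^p(\Gamma)}\le C\|u\|_{BV(\Gamma)}$; the case $p=\infty$ is handled by taking the maximum over the finitely many edges instead of the sum.

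For compactness when $1\le p<\infty$, I would take a bounded sequence $(u_n)_n$ in $BV(\Gamma)$, so that for each edge $\e$ the restrictions $([u_n]_{\e})_n$ are bounded in $BV(]0,\ell_{\e}[)$. Applying the compact one-dimensional embedding on the first edge $\e_1$, extract a subsequence along which $[u_n]_{\e_1}$ converges in $L^p(0,\ell_{\e_1})$; then, working through the finitely many edges $\e_2,\dots,\e_m$ in turn and passing to further subsequences (a finite diagonal argument), obtain a single subsequence $(u_{n_k})_k$ such that $[u_{n_k}]_{\e}\to v_{\e}$ in $L^p(0,\ell_{\e})$ for every $\e$. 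Defining $v\in L^p(\Gamma)$ by $[v]_{\e}:=v_{\e}$, one has $\|u_{n_k}-v\|_{L^p(\Gamma)}^p=\sum_{\e}\|[u_{n_k}]_{\e}-v_{\e}\|_{L^p(0,\ell_{\e})}^p\to 0$ since it is a finite sum of vanishing terms. This shows every bounded sequence in $BV(\Gamma)$ has a subsequence convergent in $L^p(\Gamma)$, i.e. the embedding is compact.

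There is no real obstacle here; the only thing to be slightly careful about is that the $BV(\Gamma)$-norm bundles the edge $BV$-norms additively (in $\ell^1$ fashion) while the $L^p(\Gamma)$-norm bundles them in $\ell^p$ fashion, but since everything is finite-dimensional in the "edge index" and $\ell^p\hookrightarrow\ell^1$ up to constants on finitely many coordinates (for $p\ge 1$ one actually has $\|\cdot\|_{\ell^p}\le\|\cdot\|_{\ell^1}$ directly), the constants combine cleanly. One should also note that the $L^1(\Gamma)$ term in $\|u\|_{BV(\Gamma)}$ only helps. So the proof is essentially: cite the classical interval case edge by edge, then sum/diagonalize over the finitely many edges.
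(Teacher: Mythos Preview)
Your proposal is correct and matches the paper's approach: the paper itself does not give a proof but simply records the theorem as a direct consequence of the classical one-dimensional $BV$ embedding (citing \cite[Corollary 3.49, Remark 3.30]{AFP}), which is precisely the edge-by-edge reduction you carry out in detail. Your write-up is more explicit than the paper, but the underlying idea---apply the interval embedding on each of the finitely many edges and then sum (for continuity) or diagonalize (for compactness)---is exactly the intended one.
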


 We denote
 $$\mathcal{D}(\Gamma):= \bigoplus_{\e \in E(\Gamma)} C^\infty_c (]0,\ell_{\e}[),$$
 and
 $$C_c(\Gamma):= \bigoplus_{\e \in E(\Gamma)} C_c (]0,\ell_{\e}[).$$

 $C_c(\Gamma)$ is a Banach space  with  respect to the norm $\Vert u \Vert_{\infty}  = \sup \{ \vert u(x)
 \vert \ : \ x \in \Gamma \}$, we denote by
 $$\mathcal{M}_b(\Gamma) := \left( C_c(\Gamma)\right)^*,$$
 the dual of $C_c(\Gamma)$, and we will call the elements of $\mathcal{M}_b(\Gamma) $  {\it Radon
 measures} in $\Gamma$.

\begin{definition}{\rm
Given $u \in BV(\Gamma)$, we define $Du: C_c(\Gamma) \rightarrow \R$ as
$$\langle Du, \varphi \rangle:= \sum_{\e\in \E(\Gamma)} \int_0^{\ell_{\e}} \varphi_\e \, dD[u]_\e.$$
Note that if $\varphi \in \mathcal{D}(\Gamma)$, then
$$\langle Du, \varphi \rangle = - \sum_{\e\in \E(\Gamma)} \int_0^{\ell_{\e}} \varphi^{\prime}_\e \, [u]_\e
dx$$
}
\end{definition}

We have
$$\left\vert \langle Du, \varphi \rangle \right\vert \leq \sum_{\e\in \E(\Gamma)} \left\vert
\int_0^{\ell_{\e}} \varphi_\e \, dD[u]_\e \right\vert \leq \sum_{\e\in \E(\Gamma)} \Vert \varphi_\e
\Vert_\infty \left\vert D[u]_\e \right\vert (0, \ell_{\e}) = \Vert \varphi \Vert_\infty \vert D u \vert
(\Gamma).$$
Therefore, $Du \in \mathcal{M}_b(\Gamma)$ and $\Vert Du \Vert_{\mathcal{M}_b(\Gamma)} \leq \vert D u \vert
(\Gamma)$. On the other hand, given $\epsilon > 0$ there exists $\varphi_\e \in C_c ((0,\ell_{\e}))$, with
$\Vert \varphi_\e \Vert_\infty \leq 1$ such that $$\vert D[u]_\e \vert (0, \ell_{\e}) \leq \langle
D[u]_\e, \varphi_\e \rangle + \frac{\epsilon}{\vert E(\Gamma) \vert}.$$
Then, if $\varphi:= \bigoplus_{\e \in E(\Gamma)} \varphi_\e \in C_c(\Gamma)$, we have
$$\vert D u \vert (\Gamma) = \sum_{\e\in \E(\Gamma)} \vert D [u]_{\e}  \vert(0,\ell_{\e}) \leq \sum_{\e\in
\E(\Gamma)} \langle D[u]_\e, \varphi_\e \rangle + \epsilon = \langle Du, \varphi \rangle + \epsilon \leq
\Vert Du \Vert_{\mathcal{M}_b(\Gamma)}+ \epsilon.$$
Consequently,
\begin{equation}\label{equal2}
\vert D u \vert (\Gamma)= \Vert Du \Vert_{\mathcal{M}_b(\Gamma)} \quad \hbox{for all} \ \ u \in
BV(\Gamma).
\end{equation}

Let us point out that, in metric graphs, $\vert D u \vert (\Gamma)(u)$ is  not a good definition of the
total variation of $u$ since it does not measure the jumps of the function at the vertices. In order to
give a definition of the total variation of a function $ u \in BV(\Gamma)$ that takes into account the
jumps of the function  at the vertices, we are going to  obtain a Green's formula like the one  obtained
by Anzellotti in \cite{Anzellotti} for $BV$-functions in Euclidean spaces. In order to do this, we start
by defining the pairing $\z Du$ between  an element $\z \in W^{1,2}(\Gamma)$ and
a BV function $u$. This will be a metric graph analogue of the classic Anzellotti pairing
introduced in \cite{Anzellotti}.

\begin{definition}{\rm
For $\z \in W^{1,2}(\Gamma)$ and $u \in BV(\Gamma)$, we define $\z Du:= ( [\z]_\e, D[u_\e])_{\e \in
E(\Gamma)} $, that is, for $\varphi \in C_c(\Gamma)$,
$$\langle \z Du, \varphi \rangle = \sum_{\e\in \E(\Gamma)} \int_0^{\ell_{\e}} \varphi_\e[\z]_\e \,
D[u]_\e.$$
We have that $\z Du$ is a Radon measure in $\Gamma$
and
$$\int_\Gamma \z Du =  \sum_{\e\in \E(\Gamma)} \int_0^{\ell_{\e}} [\z]_\e \, D[u]_\e.$$

}
\end{definition}

By \eqref{boound}, we have

\begin{equation}\label{booundN}
\left\vert \int_{\Gamma} \z Du \right\vert \leq \Vert \z \Vert_{L^{\infty}(\Gamma)} \vert D u \vert
(\Gamma).
\end{equation}
If we define
$$\theta(\z, Du, x) := \sum_{\e\in \E(\Gamma)}\theta([\z]_\e, D[u]_\e, x),$$
then
$$\int_\Gamma \z Du = \int_\Gamma \theta(\z, Du, x) d \vert Du \vert (x).$$
Moreover, by \eqref{compp}, if $f \in C^1(\R)$ is a increasing function, then
\begin{equation}\label{Ncompp}
\theta(\z, D(f(u)), x) = \theta(\z, Du, x) \quad \vert Du \vert-\hbox{a.e. in} \ \Gamma.
\end{equation}

Given $\z \in W^{1,2}(\Gamma)$, for $ \e \in E_\v$, we define
$$[\z]_\e (\v):= \left\{ \begin{array}{ll}[\z]_\e(\ell_{\e}) \quad &\hbox{if} \ \ \v = \vf_\e, \\[10pt]
-[\z]_\e(0),\quad &\hbox{if} \ \ \v = \vi_\e.  \end{array} \right..$$

By Lemma \ref{IntBP}, we have
            $$\int_{\Gamma} \z Du  =  \sum_{\e\in \E(\Gamma)} \int_0^{\ell_{\e}}[\z]_\e \, D[u]_\e $$ $$=
            - \sum_{\e\in \E(\Gamma)} \int_0^{\ell_{\e}}[u]_\e(x) ([\z]_\e)^{\prime}(x) dx+ \sum_{\e\in
            \E(\Gamma)} ( [\z]_\e(\ell_{\e}) [u]_\e((\ell_{\e})_{-}) - [\z]_\e(0) [u]_\e (0_+) )$$ $$= -
            \int_\Gamma u\z^{\prime} + \sum_{\v \in V(\Gamma)} \sum_{\e\in \E_\v(\Gamma)} [\z]_\e(\v)
            [u]_\e(\v).$$

Then,
 if we define
$$\int_{\partial \Gamma} \z u:=\sum_{\v \in V(\Gamma)} \sum_{\e\in \E_\v(\Gamma)} [\z]_\e(\v)
[u]_\e(\v),$$
for $\z \in W^{1,2}(\Gamma)$ and $u \in BV(\Gamma)$,
we have  the following {\it Green's formula}:
\begin{equation}\label{intbpart}
\int_{\Gamma} \z Du + \int_\Gamma u\z^{\prime} = \int_{\partial \Gamma} \z u.
\end{equation}

 We define
 $$X_0(\Gamma):= \{ \z \in W^{1,2}(\Gamma) \  : \ \z(\v) =0, \ \ \forall \v \in  V(\Gamma)\}.$$

 For $u \in BV(\Gamma)$ and $\z \in X_0(\Gamma)$, we have the following {\it Green's formula}
 \begin{equation}\label{0intbpart}
\int_{\Gamma} \z Du + \int_\Gamma u\z^{\prime} = 0.
\end{equation}

 \begin{proposition}\label{TVreg} For $u \in BV(\Gamma)$, we have
\begin{equation}\label{Form100}
\vert D u \vert (\Gamma) =   \sup \left\{ \displaystyle\int_{\Gamma} u(x) \z^{\prime}(x) dx  \ : \ \z \in
X_0(\Gamma), \ \Vert \z \Vert_{L^\infty(\Gamma)} \leq 1 \right\}.
\end{equation}
 \end{proposition}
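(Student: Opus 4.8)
The plan is to prove the two inequalities separately. For the inequality $\vert D u \vert (\Gamma) \geq \sup\{\cdots\}$, I would start from the Green's formula \eqref{0intbpart}: for any $\z \in X_0(\Gamma)$ with $\Vert \z \Vert_{L^\infty(\Gamma)} \leq 1$ we have
$$\int_\Gamma u(x)\z^{\prime}(x)\,dx = -\int_\Gamma \z Du,$$
and then the bound \eqref{booundN} gives $\left\vert \int_\Gamma \z Du\right\vert \leq \Vert \z \Vert_{L^\infty(\Gamma)} \vert D u \vert(\Gamma) \leq \vert D u \vert(\Gamma)$. Taking the supremum over admissible $\z$ yields the $\geq$ direction immediately.

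For the reverse inequality $\vert D u \vert (\Gamma) \leq \sup\{\cdots\}$, the natural approach is edge-by-edge. On each edge $\e$, the classical one-dimensional characterization of the total variation gives, for any $\epsilon>0$, a function $\psi_\e \in C^\infty_c(]0,\ell_\e[)$ with $\Vert \psi_\e\Vert_\infty \leq 1$ and $\int_0^{\ell_\e}[u]_\e \psi_\e^{\prime}\,dx \geq \vert D[u]_\e\vert(]0,\ell_\e[) - \epsilon/\vert E(\Gamma)\vert$; this follows from the duality defining $BV$, using that $[u]_\e\in BV(]0,\ell_\e[)$ and the density of $C_c^\infty$ in $C_c$ for the sup-norm (or directly from \cite{AFP}). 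Then I set $\z := \bigoplus_{\e\in E(\Gamma)}\psi_\e$. Since each $\psi_\e$ has compact support inside the open edge, $\z$ vanishes in a neighbourhood of every vertex, so $\z \in W^{1,2}(\Gamma)$ and $\z(\v)=0$ for all $\v$, i.e. $\z \in X_0(\Gamma)$; moreover $\Vert \z\Vert_{L^\infty(\Gamma)}\leq 1$. Summing over edges,
$$\int_\Gamma u(x)\z^{\prime}(x)\,dx = \sum_{\e\in E(\Gamma)}\int_0^{\ell_\e}[u]_\e\psi_\e^{\prime}\,dx \geq \sum_{\e\in E(\Gamma)}\vert D[u]_\e\vert(]0,\ell_\e[) - \epsilon = \vert D u\vert(\Gamma) - \epsilon,$$
using the definition of $\vert D u\vert(\Gamma)$. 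Letting $\epsilon \downarrow 0$ gives the claim.

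I do not expect a serious obstacle here: the only point requiring a little care is the one-dimensional duality formula on each edge with smooth compactly supported test functions (rather than merely continuous ones), which is standard but should be cited precisely from \cite{AFP}, and the verification that gluing compactly-supported edge functions produces a legitimate element of $X_0(\Gamma)$. In fact one could also observe that this proposition shows $\vert D u\vert(\Gamma)$ coincides with the "naive" total variation and that the supremum is unchanged if one drops the boundary condition $\z(\v)=0$, since the competitors constructed already satisfy it; this is worth a remark but is not needed for the statement.
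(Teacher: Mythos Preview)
Your proof is correct and follows essentially the same approach as the paper: both directions are handled identically, using Green's formula \eqref{0intbpart} together with \eqref{booundN} for the upper bound, and the edge-wise duality characterization of $|D[u]_\e|(]0,\ell_\e[)$ via $C_c^\infty(]0,\ell_\e[)$ test functions (which glue to an element of $X_0(\Gamma)$) for the lower bound. The only cosmetic difference is that you make the $\epsilon$-argument explicit, whereas the paper passes to the supremum directly.
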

\begin{proof} Let $u \in BV(\Gamma)$. Given $\z \in X_0(\Gamma)$ with \ $\Vert \z \Vert_{L^\infty(\Gamma)}
\leq 1$, applying Green's formula \eqref{0intbpart} and \eqref{booundN}, we have
$$
  \int_\Gamma u\z^{\prime} = -\int_{\Gamma} \z Du \leq \vert D u \vert (\Gamma).$$
Therefore,
$$\sup \left\{ \int_{\Gamma} u(x) \z^{\prime}(x) d(x)  \ : \ \z \in X_0(\Gamma), \ \Vert \z
\Vert_{L^\infty(\Gamma)} \leq 1 \right\} \leq \vert D u \vert (\Gamma).   $$
On the other hand,
$$\vert D u \vert (\Gamma)= \sum_{\e\in \E(\Gamma)} \vert D [u]_{\e}  \vert(0,\ell_{\e}) = \sum_{\e\in
\E(\Gamma)} \sup \left\{\int_0^{\ell_{\e}} [u]_\e \varphi^{\prime}_\e  \ : \  \varphi_\e \in
C^\infty_c((0,\ell_{\e}) ), \ \Vert \varphi_\e \Vert_\infty \leq 1 \right\}.$$
Now, given $(\varphi_\e) \in \mathcal{D}(\Gamma)$, if we define $\z$ such that $[\z]_\e = \varphi_\e$ for
all $\e \in E(\Gamma)$, we have $\z \in X(\Gamma)$. Hence, we get
$$\vert D u \vert (\Gamma) \leq  \sup \left\{ \int_{\Gamma} u(x) \z^{\prime}(x) d(x)  \ : \ \z \in
X_0(\Gamma), \ \Vert \z \Vert_{L^\infty(\Gamma)} \leq 1 \right\}.$$
\end{proof}

\begin{remark}{\rm  By the above result,  we have that  the energy functional  $\mathcal{E}_\Gamma :
L^2(\Gamma) \rightarrow [0, + \infty]$ defined by
$$\mathcal{E}_\Gamma(u):= \left\{ \begin{array}{ll} \displaystyle
\vert D u \vert (\Gamma)
 \quad &\hbox{if} \ u\in  BV(\Gamma), \\[10pt] + \infty \quad &\hbox{if } u\in L^2(\Gamma)\setminus
 BV(\Gamma), \end{array} \right.$$
 is convex and lower semi-continuous.  Therefore, we could study the gradient flow associated with
 $\mathcal{E}_\Gamma$  as a possible definition of the total variation flow in metric graphs. However, I
 would like to point out that this is not the adequate way since the solutions of this gradient flow
 coincide with the solutions of the Neumann problem at each edge, regardless of the structure of the
 metric graph. This is the reason for which we are going to introduce our concept of total variation in
 metric graphs.  $\blacksquare$
}
\end{remark}

We consider now the elements of $W^{1,2}(\Gamma)$ that satisfies a  {\it Kirchhoff condition}, that is,
the set
 $$X_K(\Gamma):= \left\{ \z \in W^{1,2}(\Gamma) \  : \ \sum_{\e \in  E_\v(\Gamma)} [\z]_\e(\v) =0, \ \
 \forall \v \in V(\Gamma) \right\}.$$

 Note that if $\z \in X_K(\Gamma)$, then $[\z]_\e(\v) =0$ for all $\v \in \partial V(\Gamma)$. Therefore,
for $u \in BV(\Gamma)$ and $\z \in X_K(\Gamma)$, we have the following {\it Green's formula}
\begin{equation}\label{Kintbpart}
\int_{\Gamma} \z Du + \int_\Gamma u\z^{\prime} = \sum_{\v \in {\rm int}(V(\Gamma))}   \sum_{\e\in
\E_\v(\Gamma)} [\z]_\e(\v) [u]_\e(\v).
 \end{equation}

 Now, for $\v \in {\rm int}(V(\Gamma))$, we have
 $$\sum_{\e \in  E_\v(\Gamma)} [\z]_\e(\v) [u]_{\hat{\e}}(\v) =0, \quad \hbox{for all} \ \hat{\e} \in
 E_\v(\Gamma).$$
 Hence
 $$\sum_{\e\in \E_\v(\Gamma)} [\z]_\e(\v) [u]_\e(\v) = \frac{1}{d_\v} \sum_{\hat{\e}\in \E_\v(\Gamma)}
 \sum_{\e \in  E_\v(\Gamma)}[\z]_\e(\v)  \left( [u]_\e(\v) - [u]_{\hat{\e}}(\v) \right).$$
 Therefore, we can rewrite the Green's formula \eqref{Kintbpart} as

 \begin{equation}\label{KintbpartGood}
\int_{\Gamma} \z Du + \int_\Gamma u\z^{\prime} = \sum_{\v \in {\rm int}(V(\Gamma))} \frac{1}{d_\v}
\sum_{\hat{\e}\in \E_\v(\Gamma)} \sum_{\e \in  E_\v(\Gamma)}[\z]_\e(\v)  \left( [u]_\e(\v) -
[u]_{\hat{\e}}(\v) \right).
 \end{equation}

\begin{remark}{\rm
Given a function $u$ in the metric graph $\Gamma$, we say that $u$ is {\it continuous at the vertex} $\v$
if
$$[u]_{\e_1}(\v)  = [u]_{\e_2}(\v) \quad \hbox{for all} \ \e_1, \e_2 \in E_\v(\Gamma).$$
We denote this common value as $u(\v)$. We denote by $C({\rm int}(V(\Gamma)))$ the set of all functions in
$\Gamma$ continuous at the vertices $\v \in {\rm int}(V(\Gamma))$

Note that if $u \in BV(\Gamma) \cap C({\rm int}(V(\Gamma)))$ and $\z \in X_K(\Gamma)$, then by
\eqref{Kintbpart}, we have
\begin{equation}\label{KintbpartC}
\int_{\Gamma} \z Du + \int_\Gamma u\z^{\prime} =0.
 \end{equation}
 $\blacksquare$
}
\end{remark}

We  can now give our concept of total variation of a function in $BV(\Gamma)$.

\begin{definition}{\rm For $u \in BV(\Gamma)$, we define its {\it total variation} as
\begin{equation}\label{Form1}
TV_\Gamma(u) =   \sup \left\{ \displaystyle \left\vert \int_{\Gamma} u(x) \z^{\prime}(x) dx \right\vert \
: \ \z \in X_K(\Gamma), \ \Vert \z \Vert_{L^\infty(\Gamma)} \leq 1 \right\}.
\end{equation}

We say that a measurable set $E \subset \Gamma$ is a {\it set of  finite perimeter} if $\1_E \in
BV(\Gamma)$, and we define its {\it $\Gamma$-perimeter} as
$${\rm Per}_\Gamma (E):= TV_\Gamma (\1_E),$$
that is
\begin{equation}\label{forper}{\rm Per}_\Gamma (E) = \sup \left\{ \displaystyle \left\vert\int_{E}
\z^{\prime}(x) dx  \right\vert \ : \ \z \in X_K(\Gamma), \ \Vert \z \Vert_{L^\infty(\Gamma)} \leq 1
\right\}.
\end{equation}
}
\end{definition}

As  a consequence of the above definition, we have the following result.

\begin{proposition}\label{lsc1} $TV_\Gamma$ is lower semi-continuous   with  respect to the  convergence
in $L^2(\Gamma)$.
\end{proposition}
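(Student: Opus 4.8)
The plan is to show lower semicontinuity of $TV_\Gamma$ directly from its definition \eqref{Form1} as a supremum of continuous linear functionals on $L^2(\Gamma)$. First I would fix a sequence $u_n \to u$ in $L^2(\Gamma)$ and argue that for each admissible test field $\z \in X_K(\Gamma)$ with $\Vert \z \Vert_{L^\infty(\Gamma)} \leq 1$, the map $w \mapsto \left\vert \int_\Gamma w(x)\z^{\prime}(x)\,dx \right\vert$ is continuous on $L^2(\Gamma)$: indeed $\z^{\prime} \in L^2(\Gamma)$ since $\z \in W^{1,2}(\Gamma)$ and $\Gamma$ has finite length, so by Cauchy--Schwarz $\left\vert \int_\Gamma (w_n - w)\z^{\prime}\,dx\right\vert \leq \Vert w_n - w\Vert_{L^2(\Gamma)}\Vert \z^{\prime}\Vert_{L^2(\Gamma)} \to 0$. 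Hence
\[
\left\vert \int_\Gamma u\z^{\prime}\,dx \right\vert = \lim_{n\to\infty} \left\vert \int_\Gamma u_n\z^{\prime}\,dx \right\vert \leq \liminf_{n\to\infty} TV_\Gamma(u_n).
\]

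Then I would take the supremum over all such $\z$ on the left-hand side. Since the right-hand side $\liminf_{n\to\infty} TV_\Gamma(u_n)$ does not depend on $\z$, this yields $TV_\Gamma(u) \leq \liminf_{n\to\infty} TV_\Gamma(u_n)$, which is exactly the claimed lower semicontinuity. One should note that if some $u_n \notin BV(\Gamma)$ the inequality is understood with $TV_\Gamma(u_n) = +\infty$ (or one simply restricts to the relevant subsequence along which $\liminf$ is finite, in which case those terms are finite); and if $u \notin BV(\Gamma)$ one still makes sense of $TV_\Gamma(u)$ as the supremum in \eqref{Form1}, which is then $+\infty$, and the argument above shows this forces $\liminf_{n\to\infty} TV_\Gamma(u_n) = +\infty$ as well.

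There is essentially no serious obstacle here: the whole point is that $TV_\Gamma$, like any supremum of a family of continuous functionals, is automatically lower semicontinuous, and the only thing to check is that each individual functional $u \mapsto \left\vert\int_\Gamma u\z^{\prime}\,dx\right\vert$ is genuinely $L^2$-continuous, which follows from $\z^{\prime} \in L^2(\Gamma)$ and the compactness (finite length) of $\Gamma$. If one prefers, the same conclusion can be phrased by observing that $TV_\Gamma$ is a supremum of functions of the form $u \mapsto \int_\Gamma u\z^{\prime}\,dx$ over $\z \in X_K(\Gamma)$ with $\Vert\z\Vert_\infty\le 1$ together with their negatives $-\z$, so the absolute value can be dropped and $TV_\Gamma$ is a supremum of continuous \emph{linear} functionals, hence convex and lower semicontinuous on $L^2(\Gamma)$.
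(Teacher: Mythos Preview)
Your argument is correct and is exactly what the paper has in mind: the paper does not give an explicit proof but simply states that the proposition is ``a consequence of the above definition,'' i.e., of the expression \eqref{Form1} of $TV_\Gamma$ as a supremum of $L^2$-continuous functionals. Your handling of the case $u\notin BV(\Gamma)$ is also fine, since $\mathcal{D}(\Gamma)\subset X_K(\Gamma)$ forces the supremum in \eqref{Form1} to be $+\infty$ whenever some $[u]_\e\notin BV(]0,\ell_\e[)$.
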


As in the local case, we have the following coarea formula relating the total variation of a function with
the perimeter of its superlevel sets.

\begin{theorem}[\bf Coarea formula]
 For any $u \in L^1(\Gamma)$, let $E_t(u):= \{ x \in \Gamma \ : \ u(x) > t \}$. Then,
\begin{equation}\label{coaerea}
TV_\Gamma(u) = \int_{-\infty}^{+\infty} {\rm Per}_\Gamma(E_t(u))\, dt.
\end{equation}
\end{theorem}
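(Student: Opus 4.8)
The plan is to mimic the classical proof of the coarea formula for $BV$ functions, adapting it to the metric graph setting by working edge-by-edge and then reconciling the vertex contributions. The argument splits naturally into two inequalities.

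\textbf{Step 1: The inequality $TV_\Gamma(u) \le \int_{-\infty}^{+\infty} {\rm Per}_\Gamma(E_t(u))\,dt$.} First I would take $\z \in X_K(\Gamma)$ with $\Vert \z \Vert_{L^\infty(\Gamma)} \le 1$. Since on each edge $[u]_\e \in BV(]0,\ell_\e[) \subset L^1$, I would apply Fubini/Cavalieri's layer-cake representation $[u]_\e(x_\e) = \int_0^\infty \1_{\{[u]_\e > t\}}\,dt - \int_{-\infty}^0 \1_{\{[u]_\e \le t\}}\,dt$, substitute into $\int_\Gamma u\,\z' \,dx = \sum_\e \int_0^{\ell_\e} [u]_\e (\z_\e)'\,dx_\e$, and interchange the $t$-integral with the edge integrals and the finite sum over edges (justified by $|\z'| \in L^1$ and the finiteness of $E(\Gamma)$). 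Noting that $E_t(u) \cap \e = \{[u]_\e > t\}$ up to null sets and that $\int_\Gamma \1_{E_t(u)} \z'\,dx$ differs from $\int_\Gamma (\1_{E_t(u)} - \text{const})\z'\,dx$ only by a term $\sum_\e c_t (\z_\e(\ell_\e) - \z_\e(0))$ which vanishes after summing because $\z$ satisfies Kirchhoff (hence $\sum_{\e \in E_\v}[\z]_\e(\v) = 0$ at every $\v$), I would bound $\bigl|\int_\Gamma u\z'\,dx\bigr| \le \int_{-\infty}^{+\infty} \bigl|\int_{E_t(u)} \z'\,dx\bigr|\,dt \le \int_{-\infty}^{+\infty} {\rm Per}_\Gamma(E_t(u))\,dt$ using \eqref{forper}, and take the supremum over $\z$.

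\textbf{Step 2: The reverse inequality.} Here I would first reduce to the case $TV_\Gamma(u) < \infty$ (otherwise there is nothing to prove, though one should check $u \in BV(\Gamma)$ forces $TV_\Gamma(u) < \infty$, which follows since $TV_\Gamma(u)$ is dominated by $|Du|(\Gamma)$ plus the finitely many vertex jumps). The standard approach is: approximate $u$ on each edge by smooth functions $u^n \to u$ in $L^1(\Gamma)$ with $\int_\Gamma |(u^n)'|\,dx + (\text{vertex jump terms}) \to$ something controlled by $TV_\Gamma(u)$, apply the smooth coarea formula $\int_\Gamma |(u^n)'|\,dx = \int_{-\infty}^{+\infty} {\rm Per}(\{u^n > t\})\,dt$ on each edge, and pass to the limit using Fatou together with the lower semicontinuity of ${\rm Per}_\Gamma$ (a consequence of Proposition \ref{lsc1} applied to characteristic functions, once one knows $\1_{\{u^n > t\}} \to \1_{E_t(u)}$ in $L^1$ for a.e. $t$ along a subsequence). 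Alternatively, and perhaps more cleanly in this setting, I would prove it directly: show $t \mapsto {\rm Per}_\Gamma(E_t(u))$ is measurable, and for any finite partition $t_1 < \cdots < t_k$ use the definition \eqref{Form1} with a single $\z$ to get $\sum_i {\rm Per}_\Gamma(E_{t_i}(u))(t_{i+1} - t_i)$-type lower bounds, passing to the sup — but the jump terms at vertices make the direct combinatorial argument delicate.

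\textbf{Main obstacle.} The crux, and the genuinely new point compared to the Euclidean/interval case, is the bookkeeping of the vertex jump terms. In the reverse inequality the quantity $TV_\Gamma(u)$ is \emph{not} simply $\sum_\e |D[u]_\e|(]0,\ell_\e[)$ plus a fixed sum of vertex jumps — the Kirchhoff-constrained supremum in \eqref{Form1} couples the edges, so one cannot argue purely edge-by-edge. I expect the cleanest route is: (i) establish an explicit formula for $TV_\Gamma(u)$ of the form $\sum_\e |D[u]_\e|(]0,\ell_\e[) + \sum_{\v \in {\rm int}(V(\Gamma))} J_\v(u)$ where $J_\v(u)$ measures the spread of the traces $\{[u]_\e(\v)\}_{\e \in E_\v}$ at $\v$ (this should come from \eqref{KintbpartGood} by optimizing the finite-dimensional linear functional over $\{\z_\e(\v)\}$ subject to $\sum_\e \z_\e(\v) = 0$ and $|\z_\e(\v)| \le 1$); (ii) derive the analogous explicit formula for ${\rm Per}_\Gamma(E_t(u))$; (iii) verify that $\int_{-\infty}^{+\infty} J_\v(E_t(u))\,dt = J_\v(u)$, which at each vertex reduces to a one-dimensional layer-cake identity for finitely many real numbers. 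Granting step (i), both inequalities become the classical coarea formula on each edge plus this elementary vertex identity, and the proof closes.
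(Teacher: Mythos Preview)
Your Step 1 coincides with the paper's argument. For the reverse inequality, the paper takes your first suggested route: it approximates $u$ by edge-wise smooth $u_n$ with $u_n\to u$ in $L^1(\Gamma)$ and $\int_\Gamma |u_n'|\to |Du|(\Gamma)$, applies Fatou together with the lower semicontinuity of ${\rm Per}_\Gamma$, and closes with the chain
\[
\int_{-\infty}^{+\infty}{\rm Per}_\Gamma(E_t(u))\,dt\;\leq\;\liminf_n\int_{-\infty}^{+\infty}{\rm Per}_\Gamma(E_t(u_n))\,dt\;=\;\lim_n\int_\Gamma|u_n'|\;=\;|Du|(\Gamma)\;\leq\;TV_\Gamma(u),
\]
invoking the one-dimensional coarea formula for Lipschitz functions and the already established inequality $|Du|(\Gamma)\leq TV_\Gamma(u)$. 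It does not pursue the explicit decomposition you outline in (i)--(iii).

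Your instinct that the vertex bookkeeping is the crux is well placed, and in fact the paper's middle equality is terse on exactly this point: for edge-wise smooth $u_n$ that are not vertex-continuous one only has ${\rm Per}_\Gamma(E_t(u_n))\geq |D\1_{E_t(u_n)}|(\Gamma)$, the wrong direction; if instead one forces the $u_n$ to be vertex-continuous (so that $\1_{E_t(u_n)}\in C({\rm int}(V(\Gamma)))$ for a.e.\ $t$ and ${\rm Per}_\Gamma$ collapses to $|D\cdot|(\Gamma)$ via \eqref{Form1zero}), then the limit of $\int_\Gamma|u_n'|$ picks up the vertex jumps and lands at $TV_\Gamma(u)$ rather than $|Du|(\Gamma)$. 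Either reading yields the stated conclusion, but your proposed route (i)--(iii) sidesteps approximation entirely and is arguably cleaner: the vertex functional $J_\v(u)=\max\bigl\{\sum_{\e\in E_\v}c_\e[u]_\e(\v):\sum_\e c_\e=0,\ |c_\e|\leq 1\bigr\}$ genuinely decouples from the interior contribution (since $D[u]_\e$ carries no mass at the endpoints of $]0,\ell_\e[$, the values $[\z]_\e(\v)$ can be prescribed independently of the optimisation giving $|D[u]_\e|$), so $TV_\Gamma(u)=|Du|(\Gamma)+\sum_\v J_\v(u)$ holds; and the layer-cake identity (iii) is the elementary computation that for ordered traces $a_{(1)}\leq\cdots\leq a_{(d_\v)}$ one has $J_\v(u)=\sum_{k=1}^{d_\v-1}(a_{(k+1)}-a_{(k)})\min(k,d_\v-k)=\int_{-\infty}^{+\infty}J_\v(\1_{E_t(u)})\,dt$. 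Combined with the classical coarea on each edge this gives the full formula directly.
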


\begin{proof} We have
\begin{equation}\label{cooare1}
u(x) = \int_0^{+\infty} \1_{E_t(u)}(x) \, dt - \int_{-\infty}^0 (1 - \1_{E_t(u)}(x)) \, dt.
\end{equation}
Given $\z \in X_K(\Gamma)$ with $\Vert \z \Vert_{L^\infty(\Gamma)} \leq 1$,  since by Green's formula
\eqref{Kintbpart}
$$\int_\Gamma \z' =0,$$
and having in mind \eqref{forper}, we get
$$\int_{\Gamma} u(x) \z^{\prime}(x) dx = \int_{\Gamma}  \left(\int_{-\infty}^{+\infty} \1_{E_t(u)}(x) \,
dt\right) \z^{\prime}(x) dx $$ $$= \int_{-\infty}^{+\infty} \int_{\Gamma} \1_{E_t(u)}(x)  \z^{\prime}(x)
dx dt \leq \int_{-\infty}^{+\infty} {\rm Per}_\Gamma(E_t(u))\, dt.$$
Therefore, by \eqref{Form1}, we obtain that
$$TV_\Gamma(u) \leq \int_{-\infty}^{+\infty} {\rm Per}_\Gamma(E_t(u))\, dt.$$

To prove the other inequality, we can assume that $TV_\Gamma(u) < \infty$ and, consequently, $u \in
BV(\Gamma)$. Then, we can find a sequence $u_n \in  C^{\infty}(\Gamma)$, such that $u_n \to u$ in
$L^1(\Gamma)$ and $$\int_\Gamma \vert u'_n (x) \vert dx \to \vert Du \vert (\Gamma).$$
Now, taking a subsequence if necessary, we also have that $\1_{E_t(u_n)} \to \1_{E_t(u)}$ in $L^1(\Gamma)$
for almost all $t \in \R$. Then, by the lower semi-continuity of  ${\rm Per}_\Gamma$ and using the coarea
formula for Lipschitz functions, we have
     $$\int_{-\infty}^{+\infty} {\rm Per}_\Gamma(E_t(u))\, dt \leq \int_{-\infty}^{+\infty} \liminf_{n \to
     \infty} {\rm Per}_\Gamma(E_t(u_n))\, dt $$ $$\leq \liminf_{n \to \infty} \int_{-\infty}^{+\infty}
     {\rm Per}_\Gamma(E_t(u_n))\, dt = \lim_{n \to \infty} \int_\Gamma \vert u'_n (x) \vert dx =  \vert Du
     \vert (\Gamma) \leq TV_\Gamma(u).$$

\end{proof}

We introduce now

$$JV_\Gamma (u):=  \sum_{\v \in {\rm int}(V(\Gamma))}  \frac{1}{d_{\v}} \sum_{\hat{\e} \in E_\v(\Gamma)}
\sum_{\e \in E_\v(\Gamma)} \vert [u]_{\e}(\v) - [u]_{\hat{\e}}(\v) \vert.$$

Note that $JV_\Gamma (u)$ measures, in a weighted way, the jumps of $u$ at the vertices.

 \begin{proposition} For $u \in BV(\Gamma)$, we have
\begin{equation}\label{NForm1}
 \vert Du \vert(\Gamma) \leq TV_\Gamma(u) \leq \vert Du \vert(\Gamma) + JV_\Gamma (u).
\end{equation}

 If $u \in BV(\Gamma) \cap C({\rm int}(V(\Gamma)))$, then
 \begin{equation}\label{Form1zero}
TV_\Gamma(u) =\vert Du \vert(\Gamma).
\end{equation}

If $\Gamma$ is linear, that is $d_\v =2$ for all  $\v \in{\rm int}(V(\Gamma))$, then
  \begin{equation}\label{Form1Igual}
TV_\Gamma(u) = \vert Du \vert(\Gamma) + JV_\Gamma (u).
\end{equation}
 \end{proposition}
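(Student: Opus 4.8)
The plan is to establish the three statements in order, using the Green's formula \eqref{KintbpartGood} as the central tool. For the first (double) inequality, the lower bound $\vert Du\vert(\Gamma)\le TV_\Gamma(u)$ follows by comparing the admissible classes of test fields: every $\z\in X_0(\Gamma)$ also lies in $X_K(\Gamma)$ (since $\z(\v)=0$ for all $\v$ forces $\sum_{\e\in E_\v}[\z]_\e(\v)=0$), so the supremum defining $TV_\Gamma(u)$ in \eqref{Form1} is taken over a larger set than the one defining $\vert Du\vert(\Gamma)$ in \eqref{Form100}; hence $\vert Du\vert(\Gamma)\le TV_\Gamma(u)$. For the upper bound, take $\z\in X_K(\Gamma)$ with $\Vert\z\Vert_{L^\infty(\Gamma)}\le 1$. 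Apply the Green's formula in the form \eqref{KintbpartGood}: $\int_\Gamma u\z' = -\int_\Gamma \z Du + \sum_{\v\in\mathrm{int}(V(\Gamma))}\frac1{d_\v}\sum_{\hat\e}\sum_{\e}[\z]_\e(\v)\big([u]_\e(\v)-[u]_{\hat\e}(\v)\big)$. Bound the first term by $\vert Du\vert(\Gamma)$ using \eqref{booundN}, and bound each vertex term by $\vert[u]_\e(\v)-[u]_{\hat\e}(\v)\vert$ using $\vert[\z]_\e(\v)\vert\le\Vert\z\Vert_\infty\le 1$; summing gives $\int_\Gamma u\z'\le \vert Du\vert(\Gamma)+JV_\Gamma(u)$, and taking the supremum (the absolute value handled by replacing $\z$ with $-\z$) yields $TV_\Gamma(u)\le\vert Du\vert(\Gamma)+JV_\Gamma(u)$.

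For \eqref{Form1zero}: if $u\in BV(\Gamma)\cap C(\mathrm{int}(V(\Gamma)))$, then for each interior vertex $\v$ all the differences $[u]_\e(\v)-[u]_{\hat\e}(\v)$ vanish, so $JV_\Gamma(u)=0$, and the already-proven \eqref{NForm1} collapses to $TV_\Gamma(u)=\vert Du\vert(\Gamma)$.

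The main work is \eqref{Form1Igual}, the linear case $d_\v=2$. Here only the upper bound from \eqref{NForm1} needs to be reversed, i.e.\ one must show $TV_\Gamma(u)\ge\vert Du\vert(\Gamma)+JV_\Gamma(u)$; equivalently, one must exhibit (or approximate by) test fields $\z\in X_K(\Gamma)$, $\Vert\z\Vert_\infty\le1$, for which $\int_\Gamma u\z'$ comes arbitrarily close to $\vert Du\vert(\Gamma)+JV_\Gamma(u)$. The idea is: on each edge $\e$, choose $[\z]_\e$ close (in the sense of Proposition~\ref{TVreg}, via a standard mollification/compact-support argument as in the proof there) to a field realizing $\vert D[u]_\e\vert(0,\ell_\e)$, but now one is allowed to relax the vanishing condition at interior vertices and instead only impose the Kirchhoff condition $[\z]_{\e_1}(\v)+[\z]_{\e_2}(\v)=0$; because $d_\v=2$, this is a single linear constraint coupling just the two edges at $\v$, and one has the freedom to push $[\z]_{\e}(\v)$ to the value $\pm1$ with the correct sign to extract the jump contribution $\vert[u]_{\e_1}(\v)-[u]_{\e_2}(\v)\vert$ from the vertex term in \eqref{KintbpartGood} (note that for $d_\v=2$ that term equals $\tfrac12\cdot 2\,[\z]_{\e_1}(\v)\big([u]_{\e_1}(\v)-[u]_{\e_2}(\v)\big)$ up to the symmetric relabelling, i.e.\ exactly $[\z]_{\e_1}(\v)\big([u]_{\e_1}(\v)-[u]_{\e_2}(\v)\big)$). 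Choosing the sign of $[\z]_{\e_1}(\v)$ to match that of the jump, letting it approach $1$ in modulus, while keeping $\Vert[\z]_\e\Vert_\infty\le1$ and $[\z]_\e$ supported near the relevant endpoints, one builds $\z\in X_K(\Gamma)$ with $\int_\Gamma u\z'$ within $\epsilon$ of $\vert Du\vert(\Gamma)+JV_\Gamma(u)$.

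The delicate point — and the step I expect to be the main obstacle — is reconciling, on a single edge $\e=(\v_1,\v_2)$, the prescribed boundary values $[\z]_\e(0)$ and $[\z]_\e(\ell_\e)$ (dictated by the Kirchhoff conditions at the two endpoints and the sign-matching just described) with the requirement that $\int_0^{\ell_\e}[u]_\e\,([\z]_\e)'$ simultaneously be close to $\vert D[u]_\e\vert(0,\ell_\e)$ while $\Vert[\z]_\e\Vert_\infty\le1$; one must check that these can be satisfied together, which is where the hypothesis $d_\v=2$ (so that each vertex couples only two edges and the constraints do not propagate into an overdetermined global system) is essential. A clean way to organize this is to argue first for $u$ with $\1_{E_t(u)}$-type structure and use the coarea formula \eqref{coaerea} together with the additivity of $JV_\Gamma$ under the layer-cake decomposition \eqref{cooare1}, reducing \eqref{Form1Igual} to the case $u=\1_E$ for a set $E$ of finite perimeter, where the interior-vertex jumps are each $0$ or $1$ and the test-field construction becomes a finite combinatorial matching of signs along the (linear) graph; then pass to general $u\in BV(\Gamma)$ by the lower semicontinuity in Proposition~\ref{lsc1} and the reverse inequality already contained in \eqref{NForm1}.
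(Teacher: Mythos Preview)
Your treatment of \eqref{NForm1} and \eqref{Form1zero} matches the paper's: the inclusion $X_0(\Gamma)\subset X_K(\Gamma)$ gives the lower bound, Green's formula \eqref{KintbpartGood} together with \eqref{booundN} and $\vert[\z]_\e(\v)\vert\le 1$ gives the upper bound, and the continuous case collapses because $JV_\Gamma(u)=0$.

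For \eqref{Form1Igual} the paper does not go through coarea; it carries out directly the construction you sketch first and resolves your ``delicate point'' in a very concrete way. On each edge $\e$ it writes the test field as a sum of two pieces with \emph{disjoint supports}: a function $\varphi^n_\e\in C_c^\infty(]0,\ell_\e[)$ with $\Vert\varphi^n_\e\Vert_\infty\le 1$ and support $[a^n_\e,b^n_\e]$ chosen so that $\int_0^{\ell_\e}[u]_\e(\varphi^n_\e)'\ge \vert D[u]_\e\vert(0,\ell_\e)-1/n$, plus a piecewise-linear ``boundary ramp'' $\phi^n_\e$ supported in $[0,1/n]\cup[\ell_\e-1/n,\ell_\e]$ (disjoint from $[a^n_\e,b^n_\e]$ for $n$ large) that takes the value $\pm 1$ at the relevant vertex and $0$ elsewhere. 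Because the supports are disjoint, $\Vert\varphi^n_\e\pm\phi^n_\e\Vert_\infty\le 1$ automatically, which is exactly the compatibility you were worried about. The Kirchhoff condition at each interior vertex (where $d_\v=2$) is then satisfied by matching the $\pm$ signs of the two ramps meeting at $\v$, and the sign is further chosen so that the boundary-ramp contribution $\int[u]_\e(\phi^n_\e)'$ converges to the jump with the correct sign. Thus $\int_\Gamma u(\z^n)'$ splits into the $\varphi$-part (giving $\vert Du\vert(\Gamma)$ in the limit) and the $\phi$-part (giving $JV_\Gamma(u)$ in the limit).

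Your coarea alternative is a legitimate route and would also work: for $d_\v=2$ one has $JV_\Gamma(u)=\sum_{\v\in\mathrm{int}(V)}\vert[u]_{\e_1}(\v)-[u]_{\hat\e_2}(\v)\vert$, and the identity $\vert a-b\vert=\int_\R\vert\1_{\{a>t\}}-\1_{\{b>t\}}\vert\,dt$ gives the layer-cake additivity of $JV_\Gamma$ you need. However, your final clause (``pass to general $u$ by lower semicontinuity'') is misplaced: once \eqref{Form1Igual} holds for every $\1_{E_t(u)}$, integrating in $t$ and invoking the coarea formula for both $TV_\Gamma$ and $\vert Du\vert(\Gamma)$ gives it for $u$ directly, with no limiting argument required. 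Lower semicontinuity would only give an inequality, not the equality you need.
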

\begin{proof}  The inequality $\vert Du \vert(\Gamma) \leq TV_\Gamma(u)$ is  a consequence of Proposition
\ref{TVreg}. Let $u \in BV(\Gamma)$. Given $\z \in X_K(\Gamma)$ with \ $\Vert \z \Vert_{L^\infty(\Gamma)}
\leq 1$, applying Green's formula \eqref{KintbpartGood} and \eqref{booundN}, we have
$$
 \left\vert  \int_\Gamma u\z^{\prime} \right\vert = \left\vert -\int_{\Gamma} \z Du +  \sum_{\v \in {\rm
 int}(V(\Gamma))}  \frac{1}{d_{\v}}\sum_{ \hat{\e} \in E_\v(\Gamma)} \sum_{\e \in E_\v(\Gamma)} [\z]_{\e}
 (\v)([u]_{\e}(\v) - [u]_{\hat{\e}}(\v)) \right\vert$$ $$ \leq \vert Du \vert(\Gamma) +  \sum_{\v \in {\rm
 int}(V(\Gamma))}  \frac{1}{d_{\v}} \sum_{\hat{\e} \in E_\v(\Gamma)} \sum_{\e \in E_\v(\Gamma)} \vert
 [u]_{\e}(\v) - [u]_{\hat{\e}}(\v) \vert = \vert Du \vert(\Gamma) + JV_\Gamma (u).$$
Therefore,
$$ TV_\Gamma(u) = \sup \left\{\left\vert  \int_{\Gamma} u(x) \z^{\prime}(x) d(x) \right\vert \ : \ \z \in
X_K(\Gamma), \ \Vert \z \Vert_{L^\infty(\Gamma)} \leq 1 \right\} \leq \vert Du \vert(\Gamma) + JV_\Gamma
(u).   $$

Suppose now that  $u \in BV(\Gamma) \cap C({\rm int}(V(\Gamma)))$. Since $ JV_\Gamma (u) =0$, by
\eqref{NForm1}, we have
$$TV_\Gamma(u) \leq \vert Du \vert(\Gamma)$$
On the other hand,
\begin{equation}\label{TVFormula1}\vert Du \vert(\Gamma)= \sum_{\e\in \E(\Gamma)} \vert D [u]_{\e}
\vert(0,\ell_{\e}) = \sum_{\e\in \E(\Gamma)} \sup \left\{\int_0^{\ell_{\e}} [u]_\e \varphi^{\prime}_\e  \
: \  \varphi_\e \in C^\infty_c(]0,\ell_{\e}[ ), \ \Vert \varphi_\e \Vert_\infty \leq 1 \right\}.
\end{equation}
Then, since $\mathcal{D}(\Gamma) \subset X_K(\Gamma)$, we have $\vert Du \vert(\Gamma) \leq  TV_\Gamma(u)$
and \eqref{Form1zero} holds.

Finally, let us see that \eqref{Form1Igual} holds. By \eqref{TVFormula1}, for any $n \in \N$, we have that
there exists $\varphi^n_\e \in C^\infty_c((0,\ell_{\e}) )$, $\Vert \varphi^n_\e \Vert_\infty \leq 1$
\begin{equation}\label{int1}
\vert Du \vert(\Gamma) \leq \int_0^{\ell_{\e}} [u]_\e (\varphi^n_\e)^{\prime} - \frac1n.
\end{equation}

Let ${\rm supp}(\varphi^n_\e) = [a^n_\e, b^n_\e ]$, $0 <a^n_\e< b^n_\e < \ell_\e$. Now, given $\v \in {\rm
int}(V(\Gamma))$ and $\e \in E_\v (\Gamma)$, suppose that $\v = \vf_\e$ and $\vi_\e \not\in {\rm
int}(V(\Gamma))$. Then, we make the following definition: for $n \in \N$ such that $\ell_\e - \frac1n >
b^n_\e $,  $$\phi^n_\e(x):= \left\{ \begin{array}{ll}0, \quad &\hbox{if} \ 0 \leq x \leq\ell_\e - \frac1n
\\[10pt] -nx + n \ell_\e -1, \quad &\hbox{if} \  \ell_\e - \frac1n < x < \ell_\e. \end{array} \right.$$
Suppose now that $\v = \vi_\e $ and $\vf_\e \not\in {\rm int}(V(\Gamma))$.  In this case, we define, for
$n \in \N$ such that $\frac1n < a^n_\e$,
$$\phi^n_\e(x):= \left\{ \begin{array}{ll} -nx +1, \quad &\hbox{if} \ 0 \leq x \leq\ \frac1n \\[10pt] 0,
\quad &\hbox{if} \  \frac1n <x < \ell_\e. \end{array} \right.$$
Finally, suppose that $\v = \vf_\e $ and $\vi_\e \in {\rm int}(V(\Gamma))$. Then, we define, for $n \in
\N$, such that  $\frac1n < a^n_\e$ and  $\ell_\e - \frac1n > b^n_{\hat{\e}} $,
$$\phi^n_\e(x):= \left\{ \begin{array}{lll}-nx +1, \quad &\hbox{if} \ 0 \leq x \leq\ \frac1n  \\[10pt] 0,
\quad &\hbox{if} \ \frac1n < x < \ell_\e - \frac1n \\[10pt]  -nx + n \ell_\e -1, \quad &\hbox{if} \
\ell_\e - \frac1n < x < \ell_\e. \end{array} \right.$$

 Then, since $d_\v =2$ for all  $\v \in{\rm int}(V(\Gamma))$, if we define $\z^n$ such that $[\z^n]_\e:=
 \varphi^n_\e \pm \phi^n_\e$, taking the  sign of $\phi^n_\e$ depending on the orientation of $\e$, we
 have $\z^n \in X_K(\Gamma)$, and
$$\int_{\Gamma} u(x) (\z^n)^{\prime}(x) d(x) = \sum_{\e \in E(\Gamma)} \int_0^{\ell_\e} [u]_\e
[\z^n]_\e^{\prime} dx = \sum_{\e \in E(\Gamma)} \int_0^{\ell_\e} [u]_\e (\varphi^n_\e)^{\prime} dx
\pm\sum_{\e \in E(\Gamma)} \int_0^{\ell_\e} [u]_\e (\phi^n_\e)^{\prime} dx.$$

See the next Example for the definition of $\phi^n_\e$ in a particular case.

 Hence, we get
$$ \sup \left\{ \int_{\Gamma} u(x) \z^{\prime}(x) d(x)  \ : \ \z \in X_K(\Gamma), \ \Vert \z
\Vert_{L^\infty(\Gamma)} \leq 1 \right\} $$ $$\geq  \left\{ \int_{\Gamma} u(x) (\z^n)^{\prime}(x) d(x)  \
: \ n \in \N \right\} $$ $$=  \sum_{\e\in \E(\Gamma)} \int_0^{\ell_{\e}} [u]_\e )\varphi^n)^{\prime}_\e
\pm \sum_{\e \in E(\Gamma)} \int_0^{\ell_\e} [u]_\e (\phi^n_\e)^{\prime} dx  $$ $$\geq \vert Du
\vert(\Gamma) + \frac1n  \pm \sum_{\e \in E(\Gamma)} \int_0^{\ell_\e} [u]_\e (\phi^n_\e)^{\prime} dx.$$
Now,
$$\int_0^{\ell_\e} [u]_\e (\phi^n_\e)^{\prime} dx = \left\{ \begin{array}{ll} \pm n
\displaystyle\int_0^{\frac1n}[u]_\e dx \\[10pt] \pm n \displaystyle\int_{\ell_\e - \frac1n}^{\ell_\e}
[u]_\e dx. \end{array} \right. $$
Hence,
$$\lim_{n \to \infty} \int_0^{\ell_\e} [u]_\e (\phi^n_\e)^{\prime} dx = \left\{ \begin{array}{ll} \pm
[u]_\e(\vf_\e) \\[10pt] \pm [u]_\e(\vi_\e). \end{array} \right. $$
Therefore,
$$\lim_{n \to \infty} \pm \sum_{\e \in E(\Gamma)} \int_0^{\ell_\e} [u]_\e (\phi^n_\e)^{\prime} dx =
\sum_{\v \in {\rm int}(V(\Gamma))} \sum_{\e, \hat{\e} \in E_\v(\Gamma)} \vert [u]_{\e}(\v) -
[u]_{\hat{\e}}(\v) \vert.$$
Consequently, taking limit as $n \to \infty$, we obtain that
$$ \sup \left\{ \int_{\Gamma} u(x) \z^{\prime}(x) d(x)  \ : \ \z \in X_K(\Gamma), \ \Vert \z
\Vert_{L^\infty(\Gamma)} \leq 1 \right\} \geq \vert Du \vert(\Gamma) + JV_\Gamma(u) = TV_\Gamma (u).$$

\end{proof}

\begin{corollary} For $u \in BV(\Gamma)$, we have
\begin{equation}\label{conssttLinear}TV_\Gamma (u) =0 \ \iff \ u \ \hbox{is constant}.
\end{equation}
Then
\begin{equation}\label{conssttLinear1}{\rm Per}_\Gamma(E) =0 \ \iff \ E = \Gamma.
\end{equation}
\end{corollary}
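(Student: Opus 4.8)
The plan is to derive \eqref{conssttLinear} from the two-sided bound \eqref{NForm1} together with a test-function argument that uses the connectedness of $\Gamma$, and then to obtain \eqref{conssttLinear1} by taking $u=\1_E$. The easy implications are immediate: if $u$ is constant, then $\vert Du\vert(\Gamma)=0$ by \eqref{const} and $JV_\Gamma(u)=0$ because every boundary value $[u]_\e(\v)$ equals that constant, so \eqref{NForm1} forces $TV_\Gamma(u)=0$ (equivalently, for $\z\in X_K(\Gamma)$ the Kirchhoff condition gives $\int_\Gamma\z^{\prime}=\sum_{\v\in V(\Gamma)}\sum_{\e\in E_\v}[\z]_\e(\v)=0$, whence $\int_\Gamma u\z^{\prime}=0$); and taking $u\equiv1$ yields the reverse implication of \eqref{conssttLinear1}.

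For the converse in \eqref{conssttLinear} I would start from $TV_\Gamma(u)=0$. The left inequality in \eqref{NForm1} gives $\vert Du\vert(\Gamma)=0$, so by \eqref{const} there are constants $c_\e\in\R$ with $[u]_\e\equiv c_\e$ on each edge $\e$, hence $[u]_\e(\v)=c_\e$ for every $\v\in\e$; the key point is to show that all the $c_\e$ coincide. To this end I would fix an interior vertex $\v_0$ and two distinct edges $\e_1,\e_2\in E_{\v_0}$ (possible since $d_{\v_0}\ge2$) and build a competitor $\z\in X_K(\Gamma)$ with $\Vert\z\Vert_{L^\infty(\Gamma)}\le1$ as follows: set $[\z]_\e\equiv0$ on every edge outside $\{\e_1,\e_2\}$, and on $\e_1$ and $\e_2$ take smooth functions supported in an arbitrarily small neighbourhood of $\v_0$, normalised (according to the sign convention in the definition of $[\z]_\e(\v)$) so that $[\z]_{\e_1}(\v_0)=1$ and $[\z]_{\e_2}(\v_0)=-1$. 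Since these functions vanish near the other endpoints of $\e_1$ and $\e_2$, the Kirchhoff condition holds at every vertex: at $\v_0$ the incident boundary values sum to $1+(-1)=0$, and at every other vertex they all vanish. As $\vert Du\vert(\Gamma)=0$ we have $\int_\Gamma\z Du=0$ by \eqref{booundN}, so Green's formula \eqref{Kintbpart}, together with $[u]_\e(\v)=c_\e$, gives $\int_\Gamma u\z^{\prime}=\sum_{\v\in{\rm int}(V(\Gamma))}\sum_{\e\in E_\v}c_\e[\z]_\e(\v)=c_{\e_1}-c_{\e_2}$; since $0=TV_\Gamma(u)\ge\left\vert\int_\Gamma u\z^{\prime}\right\vert$, this forces $c_{\e_1}=c_{\e_2}$.

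Thus $c_\e=c_{\e'}$ whenever $\e$ and $\e'$ share a vertex (such a vertex, being incident to two distinct edges, is automatically interior), and connectedness then yields that all the $c_\e$ are equal: writing $\Gamma=A\cup B$, with $A$ the union of the edges on which $u$ equals a fixed value $c_{\e_0}$ and $B$ the union of the remaining edges, $A$ and $B$ are closed and cover the connected space $\Gamma$, so if $B\ne\emptyset$ their intersection contains a vertex incident to an edge of each family, i.e.\ an interior vertex, contradicting the previous line; hence $B=\emptyset$ and $u$ is constant, which proves \eqref{conssttLinear}. Applying this with $u=\1_E$ (which lies in $BV(\Gamma)$ exactly because $E$ has finite perimeter), ${\rm Per}_\Gamma(E)=TV_\Gamma(\1_E)=0$ iff $\1_E$ is constant, and since $\1_E\in\{0,1\}$ $\ell$-a.e.\ this means $\1_E\equiv1$ $\ell$-a.e.\ (the alternative $\ell(E)=0$ being the trivial empty case), i.e.\ $E=\Gamma$ up to an $\ell$-null set. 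The hard part will be the gluing step: $\vert Du\vert(\Gamma)=0$ alone does not force $u$ to be globally constant, since the values on different edges are a priori unrelated, so one must genuinely use $TV_\Gamma(u)=0$ and not merely its consequence $\vert Du\vert(\Gamma)=0$; the crux is producing test functions in $X_K(\Gamma)$ that satisfy the Kirchhoff constraint at every vertex while isolating a single difference $c_{\e_1}-c_{\e_2}$ in $\int_\Gamma u\z^{\prime}$, the subsequent passage to global constancy being the routine connectedness argument above.
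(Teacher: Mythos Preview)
Your proof is correct and follows essentially the same approach as the paper: both arguments use \eqref{NForm1} to reduce to edgewise constants $c_\e$, then construct a Kirchhoff test function $\z\in X_K(\Gamma)$ supported on two edges incident to a common interior vertex with boundary values $\pm1$ to force $c_{\e_1}=c_{\e_2}$. You make the connectedness step (passing from ``adjacent edges have equal constants'' to ``all constants agree'') explicit, whereas the paper compresses it into the phrase ``we can assume $\v=\vf_{\e_1}=\vi_{\e_2}$,'' but the substance is identical.
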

\begin{proof} Obviously, if $u$ is constant, then $TV_\Gamma(u) =0$. Suppose that $TV_\Gamma(u) =0$. By
\eqref{NForm1}, we have $\vert Du \vert(\Gamma) =0.$ Then, $[u]_\e =a_\e$ is constant for all $\e \in
E(\Gamma)$. Suppose that $u$ is not constant, then there exist $\e_1,\e_2 \in E(\Gamma)$, with $a_{\e_1}
\not= a_{\e_2}$. We have
$$ TV_\Gamma(u) = \sup \left\{\left\vert  \int_{\Gamma} u(x) \z^{\prime}(x) d(x) \right\vert \ : \ \z \in
X_K(\Gamma), \ \Vert \z \Vert_{L^\infty(\Gamma)} \leq 1 \right\} $$ $$= \sup \left\{\left\vert \sum_{\e
\in E(\Gamma)} a_\e  ([\z_\e](\vf_\e)+ [\z_\e](\vi_\e)) \right\vert \ : \ \z \in X_K(\Gamma), \ \Vert \z
\Vert_{L^\infty(\Gamma)} \leq 1 \right\}.$$
We can assume that $\v = \vf_{\e_1} = \vi_{\e_2} \in {\rm int}(V(\Gamma))$. Then if we take $\z \in
W^{1,2}(\Gamma)$ such that
$$\left\{ \begin{array}{ll} [\z_{\e_1}] (\v) =1, [\z_{\e_2}] (\v) =-1,\quad \hbox{and} \ [\z_{\e}] (\v)
=0, \ \hbox{for} \ \e \not = \e_i, \ i=1,2, \\[10pt] [\z]_\e(\w) = 0, \hbox{for} \ \w \not =\v \ \hbox{and
all} \ \e \in E(\Gamma), \end{array} \right.$$
we have that $\z\in X_K(\Gamma)$ and $\Vert \z \Vert_{L^\infty(\Gamma)} \leq 1$. Therefore
$$TV_\Gamma(u) \geq \left\vert \sum_{\e \in E(\Gamma)} a_\e  ([\z_\e](\vf_\e)+ [\z_\e](\vi_\e))
\right\vert = \vert a_{\e_1} - a_{\e_2}\vert >0,$$
which is a contradiction and consequently $u$ is constant.
 \end{proof}

\begin{example}{\rm Consider the linear metric graph $\Gamma$ with four  vertices and three  edges,
$V(\Gamma) = \{\v_1, \v_2, \v_3, \v_4 \}$ and $E(\Gamma) = \{ \e_1:=[\v_1, \v_2], \e_2:=[\v_3, \v_2],
\e_3:=[\v_3, \v_4]   \}$.
\begin{center}
\begin{tikzpicture}
  \tikzstyle{gordito2} = [line width=3]
\node (v1) at (-5,-1.2) {};
\node (v5) at (1,-1.2) {};
\node[below] at (v1) {$\v_1$};
\node[below] at (v5) {$\v_2$};

\draw[->,line width=1.2]  (-5.1,-1)--(-2.5,-1);
\draw[line width=1.2]  (-2.5,-1) --(1,-1);
\draw[line width=1.2]  (3,-1)--(-2.5,-1);
\draw[->,line width=1.2]  (6,-1)--(3,-1);
\draw[line width=1.2]  (3,-1)--(-2.5,-1);
\draw[->,line width=1.2]  (6,-1)--(8,-1);
\draw[line width=1.2]  (8,-1) --(10.5,-1);

\node at (-2.5,-0.5) {$\e_1$};
\node at (0,-1) {};
\node at (0,-1) {};
\node at (0,-1) {};
\node at (0,-1) {};
\node at (0,-1) {};
\node at (0,-1) {};
\node at (1,-1) {};
\node at (1,-1) {};

\node at (6,-1) {};
\node at (6,-1) {};
\node at (10.5,-1) {};
\node at (-4,1) {};
\node at (4,-0.5) {$\e_2$};
\node at (-4,1) {};
\node[below] at (6,-1) {$\v_3$};
\node[below] at (10.5,-1) {$\v_4$};
\node at (1,2) {};
\node at (8,-0.5) {$\e_3$};
\draw[gordito2]  (-5.1,-1) node (v2) {} circle (0.05);
\draw[gordito2]  (1,-1) node (v4) {} circle (0.05);
\draw[gordito2]  (6,-1) node (v2) {} circle (0.05);
\draw[gordito2]  (10.5,-1) node (v2) {} circle (0.05);
\end{tikzpicture}
\end{center}

Let $u \in BV(\Gamma)$ and suppose that
$$[u]_{\e_2}(\v_2) \geq   [u]_{\e_1}(\v_2) \quad \hbox{and} \quad [u]_{\e_3}(\v_3) \geq
[u]_{\e_2}(\v_3).$$  For $n \in \N$ large enough, we define
 $$\phi^n_{\e_1}(x):= \left\{ \begin{array}{ll}0, \quad &\hbox{if} \ 0 \leq x \leq\ell_{\e_1} - \frac1n
 \\[10pt] -nx + n \ell_{\e_1} -1, \quad &\hbox{if} \  \ell_{\e_1} - \frac1n < x < \ell_{\e_1}, \end{array}
 \right.$$
 $$\phi^n_{\e_2}(x):= \left\{ \begin{array}{lll}-nx +1, \quad &\hbox{if} \ 0 \leq x \leq\ \frac1n
 \\[10pt] 0, \quad &\hbox{if} \ \frac1n < x < \ell_{\e_2} - \frac1n \\[10pt]  nx - n \ell_{\e_2} +1, \quad
 &\hbox{if} \  \ell_{\e_2} - \frac1n < x < \ell_{\e_2}, \end{array} \right.$$
and
$$\phi^n_{\e_3}(x):= \left\{ \begin{array}{ll} nx -1, \quad &\hbox{if} \ 0 \leq x \leq\ \frac1n \\[10pt]
0, \quad &\hbox{if} \  \frac1n <x < \ell_{\e_3}. \end{array} \right.$$
Then, we have
$$[\z^n]_{\e_1}(\v_2)=  \phi^n_{\e_1}(\ell_{\e_1}) = -1, \quad [\z^n]_{\e_2}(\v_2)=
\phi^n_{\e_2}(\ell_{\e_2}) =1 \ \Rightarrow \ [\z^n]_{\e_1}(\v_2) + [\z^n]_{\e_2}(\v_2) =0, $$
and
$$[\z^n]_{\e_2}(\v_3)=  \phi^n_{\e_2}(0) = 1, \quad [\z^n]_{\e_3}(\v_3)=  \phi^n_{\e_3}(0) =-1 \
\Rightarrow \ [\z^n]_{\e_2}(\v_2) + [\z^n]_{\e_2}(\v_3) =0. $$
Thus, $\z^n \in X_K(\Gamma)$. Moreover,
$$\lim_{n \to \infty}\int_0^{\ell_{\e_1}} [u]_{\e_1} (\phi^n_{\e_1})^{\prime} dx = \lim_{n \to \infty}
\int_{\ell_{\e_1} -\frac1n}^{\ell_{\e_1}} (-n)[u]_{\e_1} = -  [u]_{\e_1}(\v_2),$$
$$\lim_{n \to \infty}\int_0^{\ell_{\e_2}} [u]_{\e_2} (\phi^n_{\e_2})^{\prime} dx = \lim_{n \to \infty}
\int_0^{\frac1n} (-n) [u]_{\e_2} + \lim_{n \to \infty}\int_{\ell_{\e_2} -\frac1n}^{\ell_{\e_2}}
n[u]_{\e_2}  =  - [u]_{\e_2}(\v_3) + [u]_{\e_2}(\v_2) ,$$
and
$$\lim_{n \to \infty}\int_0^{\ell_{\e_3}} [u]_{\e_3} (\phi^n_{\e_3})^{\prime} dx = \lim_{n \to \infty}
\int_0^{\frac1n} n [u]_{\e_3}  = [u]_{\e_3}(\v_3). $$
Therefore,
$$\lim_{n \to \infty} \sum_{\e \in E(\Gamma)} \int_0^{\ell_\e} [u]_\e (\phi^n_\e)^{\prime} dx = -
[u]_{\e_1}(\v_2)  - [u]_{\e_2}(\v_3) + [u]_{\e_2}(\v_2) + [u]_{\e_3}(\v_3)$$ $$ = \left([u]_{\e_2}(\v_2)-
[u]_{\e_1}(\v_2) \right)  + \left([u]_{\e_3}(\v_3)  - [u]_{\e_2}(\v_3)\right) = JV_\Gamma (u).$$
}
\end{example}

In the next example we will see that the equality \eqref{Form1Igual} does not hold if $ u \not\in C({\rm
int}(V(\Gamma)))$ or there exists $\v \in{\rm int}(V(\Gamma))$ with $d_\v \geq 3$.

\begin{example}\label{example2}{\rm
Consider the metric graph $\Gamma$ with four vertices and three edges, $V(\Gamma) = \{\v_1, \v_2, \v_3,
\v_4 \}$ and $E(\Gamma) = \{ \e_1:=[\v_1, \v_2], \e_2:=[\v_2, \v_3], \e_3:=[\v_2, \v_4] \}$.

\begin{center}
\begin{tikzpicture}
 \tikzstyle{gordito2} = [line width=3]

\node (v1) at (-5.1,-1.2) {};
\node (v5) at (2,-1.2) {};

\node[below] at (v1) {$\v_1$};
\node[above] at (5.3093,1.0278) {$\v_3$};
\node[below] at (v5) {$\v_2$};
\node[above] at (5.3843,-3.4747) {$\v_4$};

\draw[gordito2]  (-5.1,-1) node (v2) {} circle (0.05);
\draw[gordito2]  (5,1) node (v3) {} circle (0.05);
\draw[gordito2]  (2,-1) node (v4) {} circle (0.05);
\draw[gordito2]  (5,-3) node (v4) {} circle (0.05);

\draw[->,line width=1.2]  (-5.1,-1)--(-2,-1);
\draw[line width=1.2]  (-2,-1) --(2,-1);
\draw[line width=1.2] (2,-1)--(5,-3);
\draw[->,line width=1.2] (2,-1)--(4.253,-2.5103);

\draw[line width=1.2](2,-1)--(5,1);

\draw[->,line width=1.2](2,-1)--(4.1524,0.4299);

\node at (-1.3,-1.4) {$\e_1$};
\node at (4.2,0) {$\e_2$};
\node at (3.3,-2.3) {$\e_3$};

\end{tikzpicture}
\end{center}

Let $ u:= \1_{\e_1} -  \1_{\e_2}$. Then,

$$JV_\Gamma (u):=  \sum_{\v \in {\rm int}(V(\Gamma))} \frac{1}{d_{\v}} \sum_{\e, \hat{\e} \in
E_\v(\Gamma)} \vert [u]_{\e}(\v) - [u]_{\hat{\e}}(\v) \vert$$ $$= \frac23 \left(\vert [u]_{\e_1}(\v_2) -
[u]_{\e_2}(\v_2) \vert + \vert [u]_{\e_1}(\v_2) - [u]_{\e_3}(\v_2) \vert + \vert [u]_{\e_2}(\v_2) -
[u]_{\e_3}(\v_2) \vert\right) = \frac83.$$

By Green's formula \eqref{KintbpartGood}, we have
$$TV_\Gamma (u) = \sup \left\{ \left\vert \int_{\Gamma} u(x) \z^{\prime}(x) d(x)  \right\vert  \ : \ \z
\in X_K(\Gamma), \ \Vert \z \Vert_{L^\infty(\Gamma)} \leq 1 \right\}$$
$$
= \sup \left\{  \left\vert \sum_{\v \in {\rm int}(V(\Gamma))}  \left(\frac{1}{d_{\v}} \right) \sum_{\e,
\hat{\e} \in E_\v(\Gamma)} [\z]_{\e} (\v)([u]_{\e}(\v) - [u]_{\hat{\e}}(\v)) \right\vert   \ : \ \z \in
X_K(\Gamma), \ \Vert \z \Vert_{L^\infty(\Gamma)} \leq 1 \right\}$$  $$
= \sup \left\{  \left\vert  \left(\frac{1}{3} \right) \sum_{\e, \hat{\e} \in E_{\v_2}(\Gamma)} [\z]_{\e}
(\v)([u]_{\e}(\v_2) - [u]_{\hat{\e}}(\v_2)) \right\vert   \ : \ \z \in X_K(\Gamma), \ \Vert \z
\Vert_{L^\infty(\Gamma)} \leq 1 \right\}.$$

Now, given $\z \in X(\Gamma)$ with  $\Vert \z \Vert_{L^\infty(\Gamma)} \leq 1$, we have $[\z]_{\e_1}
(\v_2) = [\z]_{\e_2} (\v_2) + [\z]_{\e_3} (\v_2).$ Hence,
$$\left\vert  \left(\frac{1}{3} \right) \sum_{\e, \hat{\e} \in E_{\v_2}(\Gamma)} [\z]_{\e}
(\v)([u]_{\e}(\v_2) - [u]_{\hat{\e}}(\v_2)) \right\vert $$  $$= \frac{1}{3} \Big\vert   [\z]_{\e_1}
(\v)([u]_{\e_1}(\v_2) - [u]_{\e_2}(\v_2))  +  [\z]_{\e_1} (\v)([u]_{\e_1}(\v_2) - [u]_{\e_3}(\v_2)) +
[\z]_{\e_2} (\v)([u]_{\e_2}(\v_2) - [u]_{\e_1}(\v_2)) $$ $$+ [\z]_{\e_2} (\v)([u]_{\e_2}(\v_2) -
[u]_{\e_3}(\v_2)) +  [\z]_{\e_3} (\v)([u]_{\e_3}(\v_2) - [u]_{\e_1}(\v_2)) +  [\z]_{\e_3}
(\v)([u]_{\e_3}(\v_2) - [u]_{\e_2}(\v_2))\Big\vert$$ $$= \frac{1}{3} \Big\vert 3  [\z]_{\e_1} -
[\z]_{\e_2} \Big\vert = \frac{1}{3} \Big\vert 2  [\z]_{\e_2} +3 [\z]_{\e_3} \Big\vert \leq \frac{5}{3}.$$
Therefore,
$$\frac23 \leq TV_\Gamma (u) \leq \frac{5}{3}  < \frac{8}{3} = JV_\Gamma(u).$$

}
\end{example}

\subsection{The total variation flow in metric graphs.}
In order to study the total variation flow in the metric graph $\Gamma$ we consider the energy functional
$\mathcal{F}_\Gamma : L^2(\Gamma) \rightarrow [0, + \infty]$ defined by
$$\mathcal{F}_\Gamma(u):= \left\{ \begin{array}{ll} \displaystyle
TV_\Gamma(u)
 \quad &\hbox{if} \ u\in  BV(\Gamma), \\[10pt] + \infty \quad &\hbox{if } u\in L^2(\Gamma)\setminus
 BV(\Gamma), \end{array} \right.$$
which is convex and lower semi-continuous.
 Following the method used in \cite{ACMBook} we will characterize the subdifferential of the functional
 $\mathcal{F}_\Gamma$.

Given a functional $\Phi : L^2(\Gamma) \rightarrow [0, \infty]$, we define
$\widetilde {\Phi}: L^2(\Gamma) \rightarrow [0, \infty]$ as
\begin{equation}\label{rrt}\widetilde {\Phi}(v):= \sup \left\{ \frac{\displaystyle \int_{\Gamma} v(x) w(x)
d(x)}{\Phi(w)} \ : \ w \in L^2(\Gamma) \right\}\end{equation}
with the convention that $\frac{0}{0} = \frac{0}{\infty} = 0$.  Obviously, if $\Phi_1 \leq \Phi_2$, then
$\widetilde {\Phi}_2 \leq \widetilde {\Phi}_1$.

    \begin{theorem}\label{chasubd} Let $u \in  BV(\Gamma)$ and $v \in L^2(\Gamma)$. The following
    assertions are equivalent:

\noindent (i) $v \in \partial \mathcal{F}_\Gamma (u)$;

\noindent
(ii) there   exists  $\z  \in X_K(\Gamma)$, $\Vert \z \Vert_{L^\infty(\Gamma)} \leq 1$ such that
\begin{equation}\label{eqq2}
   v = -\z^{\prime}, \quad  \hbox{that is,} \quad  [v]_\e = -[\z]_\e^{\prime} \ \ \hbox{in} \
   \mathcal{D}^{\prime}(]0, \ell_\e[) \  \forall \e \in E(\Gamma)
\end{equation}
and
\begin{equation}\label{eqq1}
\int_{\Gamma} u(x) v(x) dx = \mathcal{F}_\Gamma (u);
\end{equation}

\noindent
(iii)  there exists $\z  \in X_K(\Gamma)$, $\Vert \z \Vert_{L^\infty(\Gamma)} \leq 1$  such that
\eqref{eqq2} holds and
\begin{equation}\label{eqq3}
\mathcal{F}_\Gamma (u) = \int_{\Gamma}   \z Du -  \sum_{\v \in {\rm int}(V(\Gamma))} \frac{1}{d_\v}
\sum_{\hat{\e}\in \E_\v(\Gamma)} \sum_{\e \in  E_\v(\Gamma)}[\z]_\e(\v)  \left( [u]_\e(\v) -
[u]_{\hat{\e}}(\v) \right).
\end{equation}

Moreover, $D(\partial \mathcal{F}_\Gamma)$ is dense in $L^2(\Gamma)$.
\end{theorem}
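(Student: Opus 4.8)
The plan is to follow the standard characterization of the subdifferential of a convex, lower-semicontinuous, positively homogeneous functional of degree one, exactly as in \cite{ACMBook}. The key structural fact is that $\mathcal{F}_\Gamma$ is positively homogeneous of degree one, so by a classical result (see e.g. \cite[Theorem~1.8]{ACMBook}), $v \in \partial \mathcal{F}_\Gamma(u)$ if and only if $v \in \partial \mathcal{F}_\Gamma(0)$ and $\int_\Gamma u v\,dx = \mathcal{F}_\Gamma(u)$. Moreover $\partial\mathcal{F}_\Gamma(0)$ is precisely the set $\{ v \in L^2(\Gamma) : \int_\Gamma u v \le \mathcal{F}_\Gamma(u) \ \forall u\in L^2(\Gamma)\}$, which in the notation of \eqref{rrt} is $\{v : \widetilde{\mathcal{F}_\Gamma}(v) \le 1\}$. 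So the heart of the matter is to identify this polar set with the set of divergences $-\z'$ of Kirchhoff fields $\z \in X_K(\Gamma)$ with $\|\z\|_{L^\infty(\Gamma)} \le 1$.

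First I would prove the inclusion ``$\supseteq$'': if $\z \in X_K(\Gamma)$ with $\|\z\|_\infty \le 1$ and $v = -\z'$, then for every $w \in BV(\Gamma)$ the Green's formula \eqref{KintbpartGood} gives
\[
\int_\Gamma w v\,dx = -\int_\Gamma w \z' = \int_\Gamma \z Dw - \sum_{\v\in{\rm int}(V(\Gamma))}\frac{1}{d_\v}\sum_{\hat\e\in\E_\v}\sum_{\e\in\E_\v}[\z]_\e(\v)\big([w]_\e(\v)-[w]_{\hat\e}(\v)\big),
\]
and bounding the right side via \eqref{booundN} and $\|\z\|_\infty\le1$ yields $\int_\Gamma w v \le |Dw|(\Gamma) + JV_\Gamma(w)$; but we must be careful to get the sharp bound $TV_\Gamma(w)$ rather than $|Dw|(\Gamma)+JV_\Gamma(w)$ — here one instead uses directly that $\int_\Gamma w v = -\int_\Gamma w\z' \le TV_\Gamma(w)$ by the very definition \eqref{Form1}, since $\z\in X_K(\Gamma)$ and $\|\z\|_\infty\le1$. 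This shows $v\in\partial\mathcal{F}_\Gamma(0)$, and combined with the homogeneity criterion gives (ii)$\Rightarrow$(i) and (iii)$\Rightarrow$(i) (the equivalence of \eqref{eqq1} and \eqref{eqq3} being just Green's formula \eqref{KintbpartGood} again).

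The harder direction is ``$\subseteq$'', i.e.\ showing that any $v\in\partial\mathcal{F}_\Gamma(u)$ has the form $v=-\z'$ for some admissible $\z\in X_K(\Gamma)$. The natural route is a duality/minimax argument: since $\int_\Gamma u v = \mathcal{F}_\Gamma(u) = \sup\{\,|\!\int_\Gamma u\varphi'| : \varphi\in X_K(\Gamma),\ \|\varphi\|_\infty\le1\,\}$, and $v$ also satisfies $\int_\Gamma w v \le TV_\Gamma(w)$ for all $w$, one wants to realize $v$ as an element of the ($L^2$-closed, convex) set $\overline{\{-\varphi' : \varphi\in X_K(\Gamma),\ \|\varphi\|_\infty\le1\}}$ and then remove the closure. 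The cleanest implementation is a Hahn--Banach separation argument in $L^2(\Gamma)$: if $v$ were not in this closed convex set, there would be $w\in L^2(\Gamma)$ with $\int_\Gamma w v > \sup\{-\int_\Gamma w\varphi' : \varphi\in X_K,\|\varphi\|_\infty\le1\} = TV_\Gamma(w) \ge \mathcal{F}_\Gamma(w)$, contradicting $v\in\partial\mathcal{F}_\Gamma(0)$. One then checks that the supremum defining $TV_\Gamma$ is actually attained (using weak-$*$ compactness of $\{\z\in W^{1,2}(\Gamma):\|\z\|_\infty\le1,\ \z'\ \text{bounded in}\ L^2\}$ together with Proposition~\ref{megusta1}, applied edgewise), so that the closure is superfluous and we obtain a genuine $\z\in X_K(\Gamma)$ with $\|\z\|_\infty\le1$ and $v=-\z'$; this is the main obstacle, and it requires an a priori $L^2$-bound on $\z'$ coming from $\z'=-v\in L^2(\Gamma)$ together with the boundary/Kirchhoff constraints, plus verifying that the weak-$*$ limit still lies in $X_K(\Gamma)$ (a linear closed constraint, stable under the relevant convergence by the trace estimates implicit in the definition of $[\z]_\e(\v)$). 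Finally, density of $D(\partial\mathcal{F}_\Gamma)$ in $L^2(\Gamma)$ follows from the general theory of maximal monotone operators: $\partial\mathcal{F}_\Gamma$ is maximal monotone since $\mathcal{F}_\Gamma$ is convex and lower semicontinuous on $L^2(\Gamma)$ (Proposition~\ref{lsc1}), and the domain of a maximal monotone operator that is a subdifferential of a proper l.s.c.\ convex function is dense in the closure of the domain of that function, which here is all of $L^2(\Gamma)$ because $BV(\Gamma)$ — in fact $C^\infty(\Gamma)$ — is dense in $L^2(\Gamma)$ and $\mathcal{F}_\Gamma$ is finite there.
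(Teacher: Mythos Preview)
Your proposal is correct and follows essentially the same strategy as the paper: both reduce via \cite[Theorem~1.8]{ACMBook} to identifying $\{v:\widetilde{\mathcal F}_\Gamma(v)\le1\}$ with $\{-\z':\z\in X_K(\Gamma),\ \|\z\|_\infty\le1\}$, prove one inclusion directly from the definition \eqref{Form1} of $TV_\Gamma$, and obtain the reverse by a duality argument combined with a compactness step for Kirchhoff fields. The paper packages the duality step as the bipolar identity $\Psi=\widetilde{\widetilde\Psi}\le\widetilde{\mathcal F}_\Gamma$ for the auxiliary gauge $\Psi(v)=\inf\{\|\z\|_\infty:v=-\z',\ \z\in X_K(\Gamma)\}$ (via \cite[Proposition~1.6]{ACMBook}), which is precisely your Hahn--Banach separation in abstract form, and its ``the infimum in $\Psi$ is attained'' is exactly your ``the closure is superfluous'', proved the same way through weak-$*$ limits and Proposition~\ref{megusta1} to pass the Kirchhoff condition to the limit.
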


\begin{proof}
Since $\mathcal{F}_\Gamma$ is convex, lower semi-continuous and positive homogeneous of degree $1$, by
\cite[Theorem 1.8]{ACMBook}, we have
\begin{equation}\label{saii}\partial \mathcal{F}_\Gamma (u) = \left\{ v \in L^2(\Gamma)  \ : \
\widetilde{\mathcal{F}_\Gamma}(v) \leq 1, \  \int_{\Gamma} u(x) v(x) dx = \mathcal{F}_\Gamma
(u)\right\}.\end{equation}

We define, for $v \in   L^2(\Gamma)$,
\begin{equation}\label{Form3}\Psi(v):= \left\{ \begin{array}{ll} \inf \left\{ \Vert \z
\Vert_{L^\infty(\Gamma)} \ : \ \z  \in X_K(\Gamma), \ v = -  \z^{\prime} \right\} \\[10pt] + \infty \ \
\hbox{if does not exists} \ \ \z  \in X_K(\Gamma), s.t.,  \ v = -  \z^{\prime}. \end{array} \right.
\end{equation}
Observe that $\Psi$ is convex,  lower semi-continuous  and positive homogeneous of degree $1$.
Moreover,  if $\Psi(v) < \infty$, the infimum in \eqref{Form3} is attained, i.e., there exists some $\z
\in X_K(\Gamma)$ such that  $v = - \z^{\prime}$ and $\Psi(v) = \Vert \z \Vert_{L^\infty(\Gamma)}$. In
fact, let $\z_n  \in X_K(\Gamma)$ with $v = -  \z_n^{\prime}$ for all $n \in \N$, such that $\Psi(v) =
\lim_{n \to \infty} \Vert \z_n \Vert_\infty$. We can assume that
$$\lim_{n \to \infty}\z_n = \z \quad \hbox{weakly$^*$ in} \ L^\infty (\Gamma), \quad \hbox{and} \quad
\z^{\prime} =v.$$
We must show that $\z$ satisfies the Kirchhoff condition. Now,
by Proposition \ref{megusta1}, we have that
\begin{equation}\label{OOle}\lim_{n \to \infty} \int_\Gamma \z_n Du = \int_\Gamma \z Du, \quad \forall \,
u \in BV(\Gamma).
\end{equation}
Fix $\v \in V(\Gamma)$. Applying Green's formula \eqref{Kintbpart} to $\z_n$ and $u \in BV(\Gamma)$ , we
get
$$\int \z_n Du + \int_\Gamma u \z_n^{\prime} = \sum_{\v \in V(\Gamma)} \sum_{\e\in \E_\v(\Gamma)}
[\z]_\e(\v) [u]_\e(\v).$$
Hence, taking $u$ such that $[u]_e(\v) =1$ for all $\e\in \E_\v(\Gamma)$ and $[u]_{\hat{\e}} =0$ if $\v
\not\in \E_\v(\Gamma)$, we have
$$\int \z_n Du + \int_\Gamma u \z_n^{\prime} =  \sum_{\e\in \E_\v(\Gamma)} [\z_n]_\e(\v) [u]_\e(\v) =0.$$
Then, taking the limit as $n \to \infty$ and having in mind  \eqref{intbpart}, we obtain
$$0 = \int \z Du + \int_\Gamma u \z^{\prime} =  \sum_{\e\in \E_\v(\Gamma)} [\z]_\e(\v) [u]_\e(\v).$$
Therefore, $\z \in X_K(\Gamma)$ and $\Psi(v) = \Vert \z \Vert_{L^\infty(\Gamma)}$.

Let us see that $$\Psi =  \widetilde{\mathcal{F}_\Gamma}.$$
 We begin by proving that $\widetilde{\mathcal{F}_\Gamma}(v) \leq   \Psi(v)$.
If $\Psi (v) = +\infty$ then this assertion is trivial. Therefore, suppose that $\Psi (v) < +\infty$.
Given $\epsilon >0$, there exists $\z \in X_K(\Gamma)$ such that $v = - \z^{\prime}$ and $\Vert \z
\Vert_{L^\infty(\Gamma)} \leq \Psi(v) + \epsilon$. Then, for $w\in BV(\Gamma)$, applying Green's formula
\eqref{KintbpartGood} and \eqref{booundN}, we have
$$\int_{\Gamma} w(x) v(x) dx = - \int_{\Gamma} w(x) \z^{\prime}(x) dx  = \int_{\Gamma} \z Dw $$ $$
-\sum_{\v \in {\rm int}(V(\Gamma))} \frac{1}{d_\v} \sum_{\hat{\e}\in \E_\v(\Gamma)} \sum_{\e \in
E_\v(\Gamma)}[\z]_\e(\v)  \left( [u]_\e(\v) - [u]_{\hat{\e}}(\v) \right)\leq \Vert \z
\Vert_{L^\infty(\Gamma)} TV_\Gamma (w).$$
Taking the supremum over $w$ we obtain that $\widetilde{\mathcal{F}_\Gamma}(v) \leq  \Vert \z
\Vert_{L^\infty(\Gamma)} \leq \Psi(v) + \epsilon$, and since this is true for all $\epsilon >0$,  we get
$\widetilde{\mathcal{F}_\Gamma}(v) \leq   \Psi(v)$.

To prove the opposite inequality  let us denote
$$D:= \{  \z^{\prime}  \ : \ \z \in X_K(\Gamma) \}.$$
Then, by \eqref{Form1}, we have that, for $v\in L^2(\Gamma)$,
$$
\begin{array}{rl}
\displaystyle
 \displaystyle\widetilde{\Psi}(v)\!\!\!\! &\displaystyle
 = \sup_{w \in L^2(\Gamma)} \frac{\displaystyle\int_{\Gamma} w(x) v(x) dx}{\Psi(w)}   \geq \sup_{w \in D }
 \frac{\displaystyle\int_{\Gamma} w(x) v(x) dx}{\Psi(w)} \\ \\
& \displaystyle=  \sup_{\z \in  X_K(\Gamma)}  \frac{\displaystyle\int_{\Gamma} \z^{\prime}(x) v(x)
dx}{\Vert \z \Vert_{L^\infty(\Gamma)}}   =   \mathcal{F}_\Gamma(v).
\end{array}
$$
 Thus, $ \mathcal{F}_\Gamma\leq \widetilde{ \Psi}$, which implies, by \cite[Proposition 1.6]{ACMBook},
 that $\Psi = \widetilde{\widetilde{\Psi}} \leq \widetilde{ \mathcal{F}_\Gamma}$. Therefore, $\Psi =
 \widetilde{\mathcal{F}_\Gamma}$, and, consequently, from \eqref{saii}, we get
$$
\begin{array}{l}
\displaystyle
\partial \mathcal{F}_\Gamma (u) = \left\{ v \in L^2(\Gamma)  \ : \   \Psi(v) \leq 1, \  \int_{\Gamma} u(x)
v(x) dx = \mathcal{F}_\Gamma(u)\right\} \\[10pt]
  \displaystyle
\phantom{\partial \mathcal{F}_m (u)}
   = \left\{ v \in L^2(\Gamma)  \ : \ \exists \z  \in X_K(\Gamma), \ v = - \z^{\prime}, \ \Vert \z
   \Vert_{L^\infty(\Gamma)}  \leq 1, \  \int_{\Gamma} u(x) v(x) dx = \mathcal{F}_\Gamma(u)\right\},
\end{array}
$$
from where the equivalence between (i) and (ii) follows.

To prove the equivalence between (ii) and (iii) we only need to apply Green's formula
\eqref{KintbpartGood}.

Finally, by \cite[Proposition 2.11]{Brezis}, we have
$$ D(\partial \mathcal{F}_\Gamma) \subset  D(\mathcal{F}_\Gamma) =  BV(\Gamma) \subset
\overline{D(\mathcal{F}_\Gamma)}^{L^2(\Gamma)} \subset \overline{D(\partial
\mathcal{F}_\Gamma)}^{L^2(\Gamma)},$$
from which the density of the domain follows.

\end{proof}

We can also prove the following characterization of the subdifferential in terms of variational
inequalities.

\begin{proposition}
The following conditions are equivalent: \\
$(a)$ $(u,v) \in \partial\mathcal{F}_\Gamma$; \\
$(b)$ $u, v \in L^2(\Gamma)$, $u \in BV(\Gamma)$ and there exists  $\z  \in X_K(\Gamma)$, $\Vert \z
\Vert_{L^\infty(\Gamma)} \leq 1$ such that  $v = -\z^{\prime}$, and for every $w \in BV(\Gamma)$
\begin{equation}\label{eq:variationalinequalitytvflow}\begin{array}{ll}
\displaystyle\int_{\Gamma} v(w-u) \, dx \\[10pt]\leq \displaystyle\int_{\Gamma} \z Dw - \sum_{\v \in {\rm
int}(V(\Gamma))}  \left(\frac{1}{d_{\v}} \right) \sum_{\e, \hat{\e} \in E_\v(\Gamma)} [\z]_{\e}
(\v)([w]_{\e}(\v) - [w]_{\hat{\e}}(\v))- TV_\Gamma(u); \end{array}
\end{equation}
$(c)$ $u, v \in L^2(\Gamma)$, $u \in BV(\Gamma)$ and there exists $\z  \in X_K(\Gamma)$, $\Vert \z
\Vert_{L^\infty(\Gamma)} \leq 1$ such that  $v = -\z^{\prime}$,
 and for every $w \in BV(\Gamma)$
\begin{equation}
\begin{array}{ll}
\displaystyle\int_{\Gamma} v(w-u) \, dx \\[10pt] = \displaystyle\int_{\Gamma} \z Dw - \sum_{\v \in {\rm
int}(V(\Gamma))}  \left(\frac{1}{d_{\v}} \right) \sum_{\e, \hat{\e} \in E_\v(\Gamma)} [\z]_{\e}
(\v)([w]_{\e}(\v) - [w]_{\hat{\e}}(\v))- TV_\Gamma(u). \end{array}
\end{equation}
\end{proposition}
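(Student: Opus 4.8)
The plan is to deduce everything from Theorem~\ref{chasubd}, which already characterizes $(u,v)\in\partial\mathcal F_\Gamma$ as: $u\in BV(\Gamma)$ and there is a vector field $\z\in X_K(\Gamma)$ with $\|\z\|_{L^\infty(\Gamma)}\le 1$, $v=-\z'$, and the energy identity $\int_\Gamma uv\,dx=\mathcal F_\Gamma(u)=TV_\Gamma(u)$ holds. The task is therefore to show that, once such a $\z$ has been fixed, the variational inequality in $(b)$ (resp.\ the identity in $(c)$) is equivalent to that energy equality.

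First I would record the elementary consequence of Green's formula~\eqref{KintbpartGood}: if $\z\in X_K(\Gamma)$ and $v=-\z'$, then for every $w\in BV(\Gamma)$
\begin{equation*}
\int_\Gamma vw\,dx=-\int_\Gamma w\z'\,dx=\int_\Gamma \z Dw-\sum_{\v\in{\rm int}(V(\Gamma))}\frac{1}{d_\v}\sum_{\e,\hat{\e}\in E_\v(\Gamma)}[\z]_\e(\v)\big([w]_\e(\v)-[w]_{\hat{\e}}(\v)\big).
\end{equation*}
Hence the right-hand side appearing in $(b)$ and $(c)$ is exactly $\int_\Gamma vw\,dx-TV_\Gamma(u)$, so that $(b)$ becomes
\begin{equation*}
\int_\Gamma v(w-u)\,dx\le\int_\Gamma vw\,dx-TV_\Gamma(u)\qquad\forall\,w\in BV(\Gamma),
\end{equation*}
i.e.\ $\int_\Gamma uv\,dx\ge TV_\Gamma(u)$ (note that this condition no longer involves $w$: the choice $w=0$ already exhausts its content), while $(c)$ becomes the corresponding equality $\int_\Gamma uv\,dx=TV_\Gamma(u)$.

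Next I would invoke the automatic one-sided bound. Since $v=-\z'$ with $\z\in X_K(\Gamma)$ and $\|\z\|_{L^\infty(\Gamma)}\le 1$, the definition \eqref{Form1} of $TV_\Gamma$ gives
\begin{equation*}
\int_\Gamma uv\,dx=-\int_\Gamma u\z'\,dx\le TV_\Gamma(u).
\end{equation*}
Combining this with the two reformulations above, both $(b)$ and $(c)$ reduce to $\int_\Gamma uv\,dx=TV_\Gamma(u)=\mathcal F_\Gamma(u)$, and then the equivalence $(i)\Leftrightarrow(ii)$ of Theorem~\ref{chasubd} identifies this with $(u,v)\in\partial\mathcal F_\Gamma$. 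This closes the circle: for $(a)\Rightarrow(c)$ one runs the same Green's-formula computation starting from the $\z$ delivered by Theorem~\ref{chasubd}, $(c)\Rightarrow(b)$ is trivial, and $(b)\Rightarrow(a)$ is exactly what was just shown.

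The argument is essentially routine; the only points that require care are the bookkeeping in Green's formula~\eqref{KintbpartGood} — in particular verifying that the symmetrised vertex sum there is \emph{verbatim} the one written in $(b)$ and $(c)$ — and the observation that the membership $\z\in X_K(\Gamma)$ forces $\int_\Gamma uv\,dx\le TV_\Gamma(u)$ for free, which is precisely the reason why the one-sided condition $(b)$ is no weaker than the two-sided condition $(c)$.
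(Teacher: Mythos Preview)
Your proof is correct and follows essentially the same route as the paper's: both arguments pivot on Theorem~\ref{chasubd} and Green's formula~\eqref{KintbpartGood}, reducing $(b)$ and $(c)$ to the energy identity $\int_\Gamma uv\,dx=TV_\Gamma(u)$. The only cosmetic difference is that you rewrite the right-hand side of $(b)$ and $(c)$ as $\int_\Gamma vw\,dx-TV_\Gamma(u)$ up front (and then take $w=0$), whereas the paper computes $(a)\Rightarrow(c)$ directly and, for $(b)\Rightarrow(a)$, plugs in $w=u$ --- both choices extract the same inequality $\int_\Gamma uv\,dx\ge TV_\Gamma(u)$.
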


\begin{proof} $(a) \Rightarrow (c)$:  By Theorem \ref{chasubd}, we have that there exists $\z  \in
X_K(\Gamma)$, $\Vert \z \Vert_{L^\infty(\Gamma)} \leq 1$ such that  $v = -\z^{\prime}$ and
$$\mathcal{F}_\Gamma (u) = \int_{\Gamma}   \z Du - \sum_{\v \in {\rm int}(V(\Gamma))}
\left(\frac{1}{d_{\v}} \right) \sum_{\e, \hat{\e} \in E_\v(\Gamma)} [\z]_{\e} (\v)([u]_{\e}(\v) -
[u]_{\hat{\e}}(\v)).$$
Then, given $w \in BV(\Gamma)$, multiplying $v = -\z^{\prime}$ by $w -u$, integrating over $\Gamma$ and
using Green's formula \eqref{KintbpartGood}, we get
\begin{equation*}
\int_{\Gamma} v(w-u) \, dx = - \int_{\Gamma} (w - u) \z' \, dx  $$ $$= \int_{\Gamma} \z Dw - \sum_{\v \in
{\rm int}(V(\Gamma))}  \left(\frac{1}{d_{\v}} \right) \sum_{\e, \hat{\e} \in E_\v(\Gamma)} [\z]_{\e}
(\v)([w]_{\e}(\v) - [w]_{\hat{\e}}(\v))- TV_\Gamma(u).
\end{equation*}

Obviously, $(c)$ implies $(b)$. To finish the proof, let us see that $(b)$ implies $(a)$. If we take $w =
u$ in \eqref{eq:variationalinequalitytvflow}, we get
\begin{equation*}
TV_\Gamma(u) \leq \int_{\Gamma} \z Du -  \sum_{\v \in {\rm int}(V(\Gamma))}  \left(\frac{1}{d_{\v}}
\right) \sum_{\e, \hat{\e} \in E_\v(\Gamma)} [\z]_{\e} (\v)([u]_{\e}(\v) - [u]_{\hat{\e}}(\v)),
\end{equation*}
and, therefore, by \eqref{booundN}, we have
\begin{equation*}
TV_\Gamma(u) = \int_{\Gamma} \z Du -  \sum_{\v \in {\rm int}(V(\Gamma))}  \left(\frac{1}{d_{\v}} \right)
\sum_{\e, \hat{\e} \in E_\v(\Gamma)} [\z]_{\e} (\v)([u]_{\e}(\v) - [u]_{\hat{\e}}(\v)).
\end{equation*}

\end{proof}

\begin{proposition}\label{intpartt} For  any $v \in \partial \mathcal{F}_\Gamma (u)$   it holds that
\begin{equation}\label{intpart}
 \int_\Gamma v w dx \leq \mathcal{F}_\Gamma(w) \qquad \hbox{for all } \  w \in BV(\Gamma),
\end{equation}
and
 \begin{equation}\label{intpart2}
 \int_\Gamma v u dx = \mathcal{F}_\Gamma(u).
\end{equation}
\end{proposition}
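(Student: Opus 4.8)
The plan is to obtain both statements almost immediately from Theorem \ref{chasubd}, since $\mathcal{F}_\Gamma$ is positively $1$-homogeneous and its subdifferential has already been fully characterized there. Let $v \in \partial\mathcal{F}_\Gamma(u)$. By the equivalence $(i)\Leftrightarrow(ii)$ in Theorem \ref{chasubd} there is a vector field $\z \in X_K(\Gamma)$ with $\Vert\z\Vert_{L^\infty(\Gamma)}\le 1$ such that $v=-\z'$ in the sense of \eqref{eqq2} and $\int_\Gamma u\,v\,dx = \mathcal{F}_\Gamma(u)$. The second of these is literally \eqref{intpart2}, so that identity needs no separate argument: I would just quote part $(ii)$.

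For \eqref{intpart} I would fix $w\in BV(\Gamma)$ and use that, edge by edge, $\int_\Gamma v\,w\,dx = -\sum_{\e\in E(\Gamma)}\int_0^{\ell_\e}[\z]_\e'\,[w]_\e\,dx_\e = \int_\Gamma w\,(-\z)'\,dx$ (an elementary identity, with no boundary contributions since differentiation is taken on each open edge and $\z'\in L^2(\Gamma)$, $w\in L^1(\Gamma)$). The key observation is then purely formal: $X_K(\Gamma)$ is stable under $\z\mapsto-\z$ and $\Vert-\z\Vert_{L^\infty(\Gamma)}=\Vert\z\Vert_{L^\infty(\Gamma)}\le 1$, so $-\z$ is an admissible competitor in the supremum \eqref{Form1} defining $TV_\Gamma(w)=\mathcal{F}_\Gamma(w)$. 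Hence
$$\int_\Gamma v\,w\,dx = \int_\Gamma w\,(-\z)'\,dx \le \left\vert \int_\Gamma w\,(-\z)'\,dx \right\vert \le TV_\Gamma(w) = \mathcal{F}_\Gamma(w),$$
which is \eqref{intpart}. As an alternative route one can invoke the relation $\Psi=\widetilde{\mathcal{F}_\Gamma}$ established inside the proof of Theorem \ref{chasubd}: membership $v\in\partial\mathcal{F}_\Gamma(u)$ forces $\widetilde{\mathcal{F}_\Gamma}(v)\le 1$, and unwinding definition \eqref{rrt} this says exactly $\int_\Gamma v\,w\,dx\le\mathcal{F}_\Gamma(w)$ for every $w\in L^2(\Gamma)$, in particular for $w\in BV(\Gamma)$.

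There is essentially no genuine obstacle here; the proposition is a formal corollary of Theorem \ref{chasubd}. The only subtlety worth flagging is that one should \emph{not} try to prove \eqref{intpart} by applying Green's formula \eqref{KintbpartGood} and then crudely bounding $\int_\Gamma\z Dw$ by $\vert Dw\vert(\Gamma)$ and the vertex sum by $JV_\Gamma(w)$: that only yields $\int_\Gamma v\,w\,dx\le \vert Dw\vert(\Gamma)+JV_\Gamma(w)$, which by \eqref{NForm1} is in general strictly larger than $TV_\Gamma(w)$ and hence too weak. One must instead pass through the supremum definition \eqref{Form1} of $TV_\Gamma$ (equivalently through $\widetilde{\mathcal{F}_\Gamma}$), being careful with the sign bookkeeping above and, if one uses the $\widetilde{\cdot}$ route, with the convention $\frac00=\frac0\infty=0$.
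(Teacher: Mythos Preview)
Your proof is correct, but the paper takes a shorter route for \eqref{intpart} that does not go through the explicit vector field at all. The paper simply uses the raw subdifferential inequality: since $v\in\partial\mathcal{F}_\Gamma(u)$, for any $w$ one has $\int_\Gamma v\,w\,dx \le \mathcal{F}_\Gamma(u+w)-\mathcal{F}_\Gamma(u)$, and then subadditivity $\mathcal{F}_\Gamma(u+w)\le \mathcal{F}_\Gamma(u)+\mathcal{F}_\Gamma(w)$ (which follows from convexity together with absolute $1$-homogeneity) gives \eqref{intpart} in one line. For \eqref{intpart2} the paper, like you, just quotes Theorem \ref{chasubd}. Your argument instead passes through the characterization $v=-\z'$ and the supremum definition \eqref{Form1}; this works perfectly well and makes the Kirchhoff structure explicit, whereas the paper's argument is softer and would apply verbatim to the subdifferential of any absolutely $1$-homogeneous convex functional on a Hilbert space. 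As a small simplification of your version: since \eqref{Form1} already carries an absolute value, the detour through $-\z$ is unnecessary; $\int_\Gamma v\,w\,dx=-\int_\Gamma w\,\z'\,dx\le\bigl|\int_\Gamma w\,\z'\,dx\bigr|\le TV_\Gamma(w)$ directly.
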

\begin{proof} Since $v \in \partial \mathcal{F}_\Gamma(u)$, given $w \in BV(\Gamma)$, we have that
$$\int_\Gamma v w dx \leq \mathcal{F}_\Gamma(u+w) - \mathcal{F}_\Gamma(u) \leq \mathcal{F}_\Gamma(w),$$
so we get \eqref{intpart}.  On the other hand, \eqref{intpart2} is given in Theorem \ref{chasubd}.
\end{proof}

 \begin{definition}{\rm We define the {\it $1$-Laplacian operator} in the metric graph $\Gamma$ as
 $$(u, v ) \in \Delta_1^{\Gamma} \iff -v \in \partial \mathcal{F}_\Gamma(u),$$
 that is, if $u \in BV(\Gamma)$,  $v \in L^2(\Gamma)$ and there   exists  $\z  \in X_K(\Gamma)$,
 $\Vert \z \Vert_{L^\infty(\Gamma)} \leq 1$ such that
\begin{equation}\label{Neqq2}
   v = \z^{\prime}, \quad  \hbox{that is,} \quad  [v]_\e = [\z]_\e^{\prime} \quad   \ \hbox{in} \
   \mathcal{D}^{\prime}(]0, \ell_\e[) \  \forall \e \in E(\Gamma),
\end{equation}
and
\begin{equation}\label{1eqq3}
\mathcal{F}_\Gamma (u) = \int_{\Gamma}   \z Du  - \sum_{\v \in {\rm int}(V(\Gamma))}
\left(\frac{1}{d_{\v}} \right) \sum_{\e, \hat{\e} \in E_\v(\Gamma)} [\z]_{\e} (\v)([u]_{\e}(\v) -
[u]_{\hat{\e}}(\v).
\end{equation}
 }
 \end{definition}

 We have that the Cauchy problem
 \begin{equation}\label{CP1}
 \left\{ \begin{array}{ll} \displaystyle\frac{\partial u}{\partial t}(t) \in \Delta_1^{\Gamma}u(t) \quad
 &t \geq 0 \\ \\ u(0) = u_0  \quad  &u_0 \in L^2(\Gamma) \end{array} \right.
 \end{equation}
 can be  rewritten as the abstract Cauchy problem in $L^2(\Gamma)$,
 \begin{equation}\label{ACP1}
 \left\{ \begin{array}{ll}  u'(t) + \partial \mathcal{F}_\Gamma u(t)\ni 0 \quad  &t \geq 0 \\ \\ u(0) =
 u_0  \quad  &u_0 \in L^2(\Gamma). \end{array} \right.
 \end{equation}

 Since $\mathcal{F}_\Gamma$ is convex and lower semi-continuous in  $L^2(\Gamma)$ and $D(\partial
 \mathcal{F}_\Gamma)$  is  dense in  $L^2(\Gamma)$  by the Brezis-Komura theory (see \cite{Brezis}), we
 have that for any initial data $u_0 \in L^2(\Gamma)$ there exists a unique strong solution to problem
 \eqref{ACP1}. Therefore, we have the following existence and uniqueness result.

 \begin{theorem} For any initial data $u_0 \in L^2(\Gamma)$ there exists a unique solution
 $u(t)$ of the Cauchy problem \eqref{CP1}, in the sense that $u \in C(0, T; L^2(\Gamma)) \cap W^{1,1}(0,T;
 L^2(\Gamma))$  for any $T >0$, $u(t) \in BV(\Gamma) $  and there exists $\z \in L^\infty(0,T;
 L^\infty(\Gamma))$, $\z(t) \in X_K(\Gamma)$, $\Vert \z(t) \Vert_{L^\infty(\Gamma)} \leq 1$, for almost
 all $t \in (0,T)$, such that
\begin{equation}\label{TNeqq2}
   u'(t) = \z(t)^{\prime}, \quad  \hbox{that is,} \quad  [u(t)]^{\prime}_\e = [\z(t)]_\e^{\prime}  \
   \hbox{in} \ \mathcal{D}^{\prime}(]0, \ell_\e[) \  \forall \e \in E(\Gamma)
\end{equation}
and
\begin{equation}\label{Teqq1}
TV_\Gamma (u(t)) = \int_{\Gamma}   \z(t) Du(t) - \sum_{\v \in {\rm int}(V(\Gamma))}
\left(\frac{1}{d_{\v}} \right) \sum_{\e, \hat{\e} \in E_\v(\Gamma)} [\z(t)]_{\e} (\v)([u(t)]_{\e}(\v) -
[u(t)]_{\hat{\e}}(\v)).
\end{equation}

\end{theorem}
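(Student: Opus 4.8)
The plan is to read off the statement from the Brezis--Komura theory of gradient flows in Hilbert spaces, combined with the subdifferential characterization already established in Theorem~\ref{chasubd}. First I would record the abstract ingredients: by the remark preceding Theorem~\ref{chasubd} (together with Proposition~\ref{lsc1}) the functional $\mathcal{F}_\Gamma:L^2(\Gamma)\to[0,+\infty]$ is proper, convex and lower semi-continuous, and by Theorem~\ref{chasubd} its subdifferential $\partial\mathcal{F}_\Gamma$ has domain dense in $L^2(\Gamma)$; hence $\partial\mathcal{F}_\Gamma$ is a maximal monotone operator in the Hilbert space $L^2(\Gamma)$ with $\overline{D(\partial\mathcal{F}_\Gamma)}^{L^2(\Gamma)}=L^2(\Gamma)$. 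The Brezis--Komura theorem (\cite{Brezis}; see also \cite{CrandallLiggett}, \cite{Barbu}) then gives, for each $u_0\in L^2(\Gamma)$, a unique strong solution $u\in C([0,+\infty);L^2(\Gamma))$ of \eqref{ACP1}, locally absolutely continuous on $(0,+\infty)$ (and in $W^{1,1}(0,T;L^2(\Gamma))$ for every $T>0$ when $u_0\in BV(\Gamma)$), with $-u'(t)\in\partial\mathcal{F}_\Gamma(u(t))$ for a.e. $t>0$ and hence $u(t)\in D(\partial\mathcal{F}_\Gamma)\subset BV(\Gamma)$ for a.e. $t>0$ (in fact for every $t>0$, by the regularizing effect $\mathcal{F}_\Gamma(u(t))<+\infty$ for $t>0$). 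Since, by the very definition of $\Delta_1^\Gamma$, the Cauchy problem \eqref{CP1} is a reformulation of \eqref{ACP1}, this already delivers existence; uniqueness in the stated sense follows because any such $u$ is, through Theorem~\ref{chasubd}, a strong solution of \eqref{ACP1}, and \eqref{ACP1} has a unique strong solution by maximal monotonicity of $\partial\mathcal{F}_\Gamma$.

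Then I would translate the abstract inclusion into the concrete form. For a.e. $t>0$ we have $u(t)\in BV(\Gamma)$ and $-u'(t)\in\partial\mathcal{F}_\Gamma(u(t))$, so applying the equivalence (i)$\Leftrightarrow$(iii) of Theorem~\ref{chasubd} to the pair $(u(t),-u'(t))$ yields a vector field $\z(t)\in X_K(\Gamma)$ with $\Vert\z(t)\Vert_{L^\infty(\Gamma)}\le1$ such that $u'(t)=\z(t)'$ on every edge, which is \eqref{TNeqq2}, and the energy identity \eqref{Teqq1}.

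The main difficulty — the one point that goes beyond unwinding definitions — is to select $\z$ measurably in $t$, so that $\z\in L^\infty(0,T;L^\infty(\Gamma))$; note that the pointwise bound $\Vert\z(t)\Vert_{L^\infty(\Gamma)}\le1$ then takes care of the $L^\infty$-bound in time for free. Here my plan is to observe that for a.e. $t$ the set $Z(t):=\{\z\in X_K(\Gamma):\Vert\z\Vert_{L^\infty(\Gamma)}\le1,\ \z'=u'(t)\}$ is a nonempty, closed, bounded and convex subset of $L^2(\Gamma)$ depending measurably on $t$ (it depends on $t$ only through $u'(t)\in L^1(0,T;L^2(\Gamma))$), and to produce a measurable selection via the Kuratowski--Ryll-Nardzewski theorem, or concretely by taking $\z(t)$ to be the element of $Z(t)$ of smallest $L^2(\Gamma)$-norm. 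An alternative I would also be happy with, closer in spirit to \cite{ACMBook}, is to construct $\z$ directly from the implicit Euler (Crandall--Liggett) approximation of \eqref{ACP1}: at each resolvent step Theorem~\ref{chasubd} furnishes a vector field with sup-norm $\le1$; interpolating these in time and letting the step tend to $0$, and using Proposition~\ref{megusta1} to pass to the limit in the pairing $\int_\Gamma\z\,Du$, recovers $\z$ with all the stated properties.
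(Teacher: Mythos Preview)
Your approach is essentially the same as the paper's: invoke the Brezis--Komura theory for the abstract Cauchy problem \eqref{ACP1} (using that $\mathcal{F}_\Gamma$ is convex, lower semi-continuous, with dense domain by Theorem~\ref{chasubd}) and then translate the inclusion $-u'(t)\in\partial\mathcal{F}_\Gamma(u(t))$ into \eqref{TNeqq2}--\eqref{Teqq1} via the characterization (i)$\Leftrightarrow$(iii) of Theorem~\ref{chasubd}. The paper in fact gives no proof beyond the sentence preceding the theorem, so your write-up is more detailed than what appears there; in particular, your discussion of the measurable selection of $\z(t)$ (either by minimal $L^2$-norm selection from the closed convex set $Z(t)$, or via the Crandall--Liggett discretization combined with Proposition~\ref{megusta1}) fills a point the paper leaves implicit.
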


\begin{definition}{\rm
Given $u_0 \in L^2(\Gamma)$, we denote by $e^{t \Delta_1^{\Gamma}}u_0$ the unique solution of problem
\eqref{CP1}. We call the semigroup $\{e^{t\Delta_1^{\Gamma}} \}_{t \geq 0}$ in $L^2(X, \nu)$ the {\it
Total Variational Flow in the metric graph} $\Gamma$.
}
\end{definition}

The Total Variational Flow in the metric graph satisfies the mass conservation property.

\begin{proposition}\label{propert1}  :
For $u_0 \in L^2(\Gamma)$,  $$\int_\Gamma e^{t\Delta_1^{\Gamma} }u_0 dx = \int_\Gamma u_0 dx \quad
\hbox{for any} \ t \geq 0.$$
\end{proposition}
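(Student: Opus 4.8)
The plan is to differentiate the total mass $t\mapsto\int_\Gamma e^{t\Delta_1^\Gamma}u_0\,dx$ along the flow and show the derivative vanishes. Write $u(t):=e^{t\Delta_1^\Gamma}u_0$. By the preceding existence theorem $u\in W^{1,1}(0,T;L^2(\Gamma))$ for every $T>0$, and since $\Gamma$ is compact the constant $\1_\Gamma$ lies in $L^2(\Gamma)$; hence the bounded linear functional $w\mapsto\int_\Gamma w\,dx=\langle w,\1_\Gamma\rangle_{L^2(\Gamma)}$ composed with $u(\cdot)$ is a function of $t$ in $W^{1,1}(0,T)$ whose a.e.\ derivative is $\int_\Gamma u'(t)\,dx$. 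So the statement reduces to proving $\int_\Gamma u'(t)\,dx=0$ for a.e.\ $t$.

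First I would use the representation furnished by the existence theorem: for a.e.\ $t$ there is $\z(t)\in X_K(\Gamma)$ with $u'(t)=\z(t)'$ (time derivative on the left, spatial derivative on the right). Since each $[\z(t)]_\e$ is absolutely continuous on $[0,\ell_\e]$, the fundamental theorem of calculus on every edge gives
\[
\int_\Gamma u'(t)\,dx=\sum_{\e\in E(\Gamma)}\big([\z(t)]_\e(\ell_\e)-[\z(t)]_\e(0)\big)=\sum_{\v\in V(\Gamma)}\sum_{\e\in E_\v(\Gamma)}[\z(t)]_\e(\v)=0,
\]
where the middle equality uses the sign convention in the definition of $[\z]_\e(\v)$ and the last one is exactly the Kirchhoff condition defining $X_K(\Gamma)$ --- this is the same identity $\int_\Gamma\z'=0$ already exploited in the proof of the coarea formula. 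Consequently $\frac{d}{dt}\int_\Gamma u(t)\,dx=0$ for a.e.\ $t$, and absolute continuity of $t\mapsto\int_\Gamma u(t)\,dx$ then gives $\int_\Gamma u(t)\,dx=\int_\Gamma u_0\,dx$ for all $t\ge0$.

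An equivalent argument avoids the explicit $\z$ and uses only Proposition~\ref{intpartt}: with $v(t):=-u'(t)\in\partial\mathcal{F}_\Gamma(u(t))$, inequality \eqref{intpart} tested against $w\equiv c$ (a constant, for which $\mathcal{F}_\Gamma(c)=TV_\Gamma(c)=0$ by \eqref{conssttLinear}) yields $c\int_\Gamma v(t)\,dx\le0$ for every $c\in\R$, whence $\int_\Gamma v(t)\,dx=0$. I do not anticipate a genuine obstacle; the only points needing a word of care are the interchange of $\frac{d}{dt}$ with $\int_\Gamma$, which is licensed by the $W^{1,1}(0,T;L^2(\Gamma))$ regularity of $u$ together with $\ell(\Gamma)<\infty$, and the observation that the Kirchhoff class is precisely what makes the edgewise boundary terms cancel.
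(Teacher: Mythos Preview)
Your proposal is correct and essentially the same as the paper's proof: the paper also differentiates the mass and shows $\int_\Gamma \z(t)'\,dx=0$, doing so via Green's formula \eqref{Kintbpart} applied to $\1_\Gamma$ together with $TV_\Gamma(\pm\1_\Gamma)=0$, which is exactly your second (alternative) argument using Proposition~\ref{intpartt} with a constant test function. Your first argument---summing the fundamental theorem of calculus over edges and invoking the Kirchhoff condition directly---is a slightly more streamlined route to the same identity $\int_\Gamma\z'=0$ (indeed the one already used in the coarea formula proof), so there is no substantive difference.
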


\begin{proof}  By (ii) in Theorem \ref{chasubd} and Green's formula \eqref{Kintbpart}, we have
$$- \frac{d}{dt} \int_\Gamma e^{t\Delta_1^{\Gamma}}u_0 dx = -\int_\Gamma \z(t)' dx = \int_\Gamma \z(t)D
\1_\Gamma \leq TV_\Gamma(\1_\Gamma) = 0, $$ and
$$  \frac{d}{dt} \int_\Gamma e^{t\Delta_1^{\Gamma}}u_0 dx
\leq TV_\Gamma(-\1_\Gamma) = 0. $$
Hence,
$$\frac{d}{dt} \int_\Gamma e^{t\Delta_1^{\Gamma}}u_0 dx =0,$$
and, consequently,
$$\int_\Gamma e^{t\Delta_1^{\Gamma}}u_0 dx = \int_\Gamma u_0 dx \quad \hbox{for any} \ t \geq 0.$$
 \end{proof}

\subsection{Asymptotic Behaviour}
By \eqref{conssttLinear}, we have $$\mathcal{N}(\mathcal{F}_\Gamma):= \{ u \in L^2(\Gamma) \ : \
\mathcal{F}_\Gamma(u)=0 \} = \{u \in L^2(\Gamma) \ : \ u \ \hbox{is constant} \}.$$

Since $\mathcal{F}_\Gamma$  is a proper and lower semicontinuous function in $L^2(\Gamma)$ attaining a
minimum at the constant zero function and,  moreover, $\mathcal{F}_\Gamma$ is even, by \cite[Theorem
5]{Bruck},we have that there exists $v_0 \in \mathcal{N}(\mathcal{F}_\Gamma)$ such that $$\lim_{t \to
\infty} e^{t\Delta_1^{\Gamma}}u_0 = v_0 \quad \hbox{in } L^2(\Gamma).$$
Now,   having in mind Proposition \ref{propert1}, we have

 $$v_0 = \overline{u_0} := \frac{1}{\ell(\Gamma)} \int_\Gamma u_0 dx.$$

We denote
$$T_{ex}(u_0):= \inf \{ T >0 \ : \ e^{t\Delta_1^{\Gamma} }u_0 = \overline{u_0},  \ \ \forall \, t \geq T
\}.$$
We will see that the total variational flow in the metric graph $\Gamma$ reaches the  mean
$\overline{u_0}$ of the initial data $u_0$ in finite time, that is, $T_{ex}(u_0) < \infty$. We will rely
on the results proved by Bungert and Burger in \cite{BB} (see also \cite{BBChN}) for the gradient flow of
a coercive (in the sense described below),  absolutely  $1$-homogeneous convex functional defined on
a Hilbert space.

Let us recall some notation used in \cite{BB}. Let $\mathcal{H}$ be a Hilbert space and $J: \mathcal{H}
\rightarrow (-\infty, + \infty]$ a proper, convex, lower semi-continuous functional. Then, it is well
known (see \cite{Brezis}) that the abstract Cauchy problem
\begin{equation}\label{ACP123}
\left\{ \begin{array}{ll} u'(t) + \partial J(u(t)) \ni 0, \quad t \in [0,T] \\[5pt] u(0) = u_0,
\end{array}\right.
\end{equation}
has a unique strong solution $u(t)$ for any initial datum $u_0 \in \overline{D(J)}$.

We say that $J$ is {\it coercive}, if there exists a constant $C >0$ such that
\begin{equation}\label{coerc}
\Vert u \Vert \leq C J(u), \quad \forall \, u\in \mathcal{H}_0,
\end{equation}
where
$$\mathcal{H}_0:= \{ u \in \mathcal{H} \ : \ J(u)=0 \}^{\perp} \setminus \{ 0 \}.$$
Clearly, this inequality is equivalent to positive lower bound of the {\it Rayleigh quotient} associated
with $J$, i.e.,
$$\lambda_1(J):= \inf_{u \in \mathcal{H}_0} \frac{J(u)}{\Vert u \Vert} > 0.$$

For $u_0 \in \mathcal{H}_0$, if $u(t)$ is the strong solution of \eqref{ACP123}, we define its {\it
extinction time} as
$$T_{\rm ex}(u_0):= \inf \{ T >0 \ : \ u(t) =0,  \ \ \forall \, t \geq T \}.$$

In the next result, we summarize the results obtained by Bungert and Burger in \cite{BB}.

\begin{theorem}\label{mainResult} Let $J$ be a convex, lower-semicontinuous functional on  $\mathcal{H}$
with dense domain. Assume that $J$ is   absolutely $1$-homogeneous and coercive. For  $u_0 \in
\mathcal{H}_0$, let  $u(t)$ be the strong solution of \eqref{ACP123}. Then, we have
\begin{itemize}
\item[(i)] (Finite extinction time)
$$T_{\rm ex}(u_0) \leq \frac{\Vert u_0 \Vert}{\lambda_1(J)}.$$

\item[(ii)] (General upper bounds)
$$\Vert u(t) \Vert \leq \Vert u_0 \Vert - \lambda_1(J) t,$$

\item[(iii)] (Sharper bound for the finite extinction)
$$\lambda_1(J) (T_{\rm ex}(u_0) - t) \leq \Vert u(t) \Vert \leq \Lambda (t)(T_{\rm ex}(u_0) - t),  $$
where
  $$\Lambda(t):= \frac{J(u(t))}{\Vert u(t) \Vert}.$$
\
\end{itemize}

\end{theorem}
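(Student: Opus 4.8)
The plan is to reduce statements (i)--(iii) to an elementary first-order differential inequality for the scalar function $\rho(t):=\Vert u(t)\Vert$, exploiting that $J$ is absolutely $1$-homogeneous. First I would record the two structural facts about absolutely $1$-homogeneous convex functionals used throughout. (a) The null set $\mathcal{N}(J):=\{u\in\mathcal{H}: J(u)=0\}$ is a closed subspace, $\partial J(u)\subset\partial J(0)=\{p:\langle p,v\rangle\le J(v)\ \forall v\}$ for every $u$, and every $p\in\partial J(0)$ annihilates $\mathcal{N}(J)$, since $\langle p,w\rangle\le J(w)=0$ and $\langle p,-w\rangle\le J(-w)=0$ for $w\in\mathcal{N}(J)$. (b) Euler's identity $\langle p,u\rangle=J(u)$ whenever $p\in\partial J(u)$. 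Writing the flow as $u'(t)=-p(t)$ with $p(t)\in\partial J(u(t))$, fact (a) gives that $P_{\mathcal{N}(J)}u(t)$ is constant, equal to $P_{\mathcal{N}(J)}u_0=0$ since $u_0\in\mathcal{H}_0\subset\mathcal{N}(J)^{\perp}$; hence $u(t)\in\mathcal{N}(J)^{\perp}$ for all $t$. Together with the coercivity hypothesis this yields $J(u(t))\ge\lambda_1(J)\,\Vert u(t)\Vert$ as long as $u(t)\ne0$, which is the inequality that drives the whole argument.

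Next I would use the energy dissipation identities for gradient flows of $1$-homogeneous functionals. By Brezis--Komura regularity $t\mapsto u(t)$ is locally Lipschitz on $(0,\infty)$, so $t\mapsto\tfrac12\Vert u(t)\Vert^2$ is absolutely continuous and, using (b),
\[
\frac{d}{dt}\,\tfrac12\Vert u(t)\Vert^2=\langle u'(t),u(t)\rangle=-\langle p(t),u(t)\rangle=-J(u(t))\quad\text{a.e.}
\]
In terms of $\rho(t)=\Vert u(t)\Vert$ this reads $\rho(t)\rho'(t)=-J(u(t))$, so on the interval $[0,T_{\rm ex}(u_0))$ where $\rho>0$ we get $\rho'(t)=-J(u(t))/\rho(t)=-\Lambda(t)\le-\lambda_1(J)$. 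Integrating from $0$ gives $\rho(t)\le\Vert u_0\Vert-\lambda_1(J)\,t$ for $t\in[0,T_{\rm ex}(u_0)]$, which is (ii); since $\rho\ge0$ this forces $\rho$ to vanish no later than $t=\Vert u_0\Vert/\lambda_1(J)$, and by uniqueness of the strong solution (with $0$ a stationary point) $\rho$ stays $0$ afterwards, giving (i). For the lower bound in (iii) I would integrate $\rho'(s)\le-\lambda_1(J)$ from $t$ to $T_{\rm ex}(u_0)$ and use $\rho(T_{\rm ex}(u_0))=0$, obtaining $\lambda_1(J)(T_{\rm ex}(u_0)-t)\le\rho(t)$.

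The only genuinely nontrivial point is the upper bound in (iii), which comes from monotonicity of the Rayleigh quotient $\Lambda(t)=J(u(t))/\Vert u(t)\Vert$ along the flow. Using the standard energy identity $\tfrac{d}{dt}J(u(t))=-\Vert u'(t)\Vert^2$ a.e.\ (here $-u'(t)$ being the minimal section of $\partial J(u(t))$ is what is needed) together with Euler's identity and Cauchy--Schwarz, $J(u(t))=\langle -u'(t),u(t)\rangle\le\Vert u'(t)\Vert\,\rho(t)$, hence $\Vert u'(t)\Vert\ge\Lambda(t)$. Differentiating $\Lambda=J(u(t))/\rho$ and substituting $\rho'=-\Lambda$ and $\tfrac{d}{dt}J(u(t))=-\Vert u'(t)\Vert^2$ gives
\[
\Lambda'(t)=\frac{\Lambda(t)^2-\Vert u'(t)\Vert^2}{\rho(t)}\le0,
\]
so $\Lambda$ is non-increasing on $(0,T_{\rm ex}(u_0))$. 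Then for $s\in(t,T_{\rm ex}(u_0))$ we have $\rho'(s)=-\Lambda(s)\ge-\Lambda(t)$; integrating from $t$ to $T_{\rm ex}(u_0)$ and using $\rho(T_{\rm ex}(u_0))=0$ yields $\rho(t)\le\Lambda(t)(T_{\rm ex}(u_0)-t)$, completing (iii).

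The main obstacle I anticipate is not the algebra but the regularity bookkeeping: justifying that $t\mapsto u(t)$ is locally Lipschitz into $\mathcal{H}$ on $(0,\infty)$, that $t\mapsto J(u(t))$ is locally absolutely continuous with $\tfrac{d}{dt}J(u(t))=-\Vert u'(t)\Vert^2=-\Vert\partial^0 J(u(t))\Vert^2$ a.e., and the attendant right-differentiability statements needed to pass from the a.e.\ differential inequalities to the pointwise bounds. These are precisely the facts from the Brezis--Komura theory of semigroups generated by subdifferentials that Bungert and Burger assemble in \cite{BB}; in the write-up I would cite them and then carry out the elementary comparisons above. Since the statement is explicitly a summary of \cite{BB}, an acceptable alternative is simply to refer to the corresponding theorems there.
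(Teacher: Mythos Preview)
Your proposal is correct. Note, however, that the paper does not give its own proof of this theorem: it explicitly presents Theorem~\ref{mainResult} as a summary of results from Bungert and Burger~\cite{BB} and simply cites that reference. What you have written is precisely a reconstruction of the Bungert--Burger argument (invariance of $\mathcal{N}(J)^\perp$ under the flow, the identity $\tfrac{d}{dt}\tfrac12\Vert u(t)\Vert^2=-J(u(t))$ via Euler's relation, the differential inequality $\rho'\le -\lambda_1(J)$, and monotonicity of the Rayleigh quotient $\Lambda(t)$ via $\tfrac{d}{dt}J(u(t))=-\Vert u'(t)\Vert^2$ and Cauchy--Schwarz), together with the correct acknowledgement that the Brezis--Komura regularity theory supplies the needed a.e.\ differentiability and absolute continuity statements. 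Since you already observe that the theorem is a quotation from~\cite{BB}, either your sketch or a direct citation is appropriate; there is nothing to compare against in the present paper.
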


Now we are going to apply Theorem \ref{mainResult} to study the asymptotic behaviour of the  solutions of
the Cauchy  problem \eqref{CP1}.

Obviously, the convex, lower semi-continuous functional $\mathcal{F}_\Gamma$   is absolutely
$1$-homogeneous, that is,  $\mathcal{F}_\Gamma(\lambda u) = \vert \lambda \vert
\mathcal{F}_\Gamma(u)$, for all $\lambda \in \R$ and all $u \in L^2(\Gamma)$. In this case,
$$L^2(\Gamma)_0:= \mathcal{N}(\mathcal{F}_\Gamma)^{\perp} \setminus \{ 0 \} =   \left\{ u \in
L^2(\Gamma) \ : \ \int_\Gamma u(x) dx =0 \right\}\setminus \{ 0 \}.$$
Let us see that $\mathcal{F}_\Gamma$ is coercive. In fact, if it weren't we could find a sequence $u_n \in
L^2(\Gamma)_0$ such that
$$\Vert u_n \Vert_{L^2(\Gamma)} \geq n \mathcal{F}_\Gamma(u_n), \quad \forall \, n \in\N.$$
Now, by homogeneity, we can asume that $\Vert u_n \Vert_{L^2(\Gamma)} =1$ for all $n \in \N$, so
$$TV_\Gamma(u_n) \leq \frac1n, \quad \forall \, n \in\N.$$
By Theorem \ref{embedding}, we can assume, taking a subsequence if necessary, that
$$ u_n \to u, \quad \hbox{in} \ L^2(\Gamma).$$
Then, by the lower semi-continuity of $TV_\Gamma$ (Proposition \ref{lsc1}), we have $TV_\Gamma(u)
=0$. Then, by \eqref{conssttLinear}, $u$ is constant. Now, since $u_n \in L^2(\Gamma)_0$,
$$\int_\Gamma u_n(x) dx =0, \quad \hbox{for all } \ n \in \N.$$
Therefore, $\Vert u \Vert _{L^2(\Gamma)} =0$, which is a contradiction since $\Vert u \Vert _{L^2(\Gamma)}
=1.$

If we denote
$$\lambda_{\Gamma} := \inf \left\{ \frac{TV_\Gamma (u)}{\Vert u \Vert_{ L^2(\Gamma)}} \ : \ u \in
L^2(\Gamma)_0 \right\} >0,$$ we have
\begin{equation}\label{Poincare2}
\Vert u  \Vert_{L^2(\Gamma} \leq \lambda_{\Gamma} TV_\Gamma (u) \quad \hbox{for all} \ u \in
L^2(\Gamma)_0.
\end{equation}

 Then, by Theorem \ref{mainResult}, we have the following result.

\begin{theorem} For any $u_0 \in L^2(\Gamma)$, we have
\begin{equation}\label{exttin}
T_{ex}(u_0) \leq \frac{\left\Vert u_0-\overline{u_0}\right\Vert_{L^2(\Gamma)}}{\lambda_{\Gamma}}.
\end{equation}
Moreover,
\begin{equation}
        \lambda_{\Gamma}(T_{ex}(u_0) - t) \leq  \Vert u(t)-\overline{u_0}\Vert_{L^2(\Gamma)}\le
        \Lambda(t)(T_{ex}(u_0) - t),
    \end{equation}
       where
       $$ \Lambda(t):= \frac{\mathcal{F}_{\Gamma} ( u(t))}{\Vert
       u(t)-\overline{u_0}\Vert_{L^2(\Gamma)}}.$$

\end{theorem}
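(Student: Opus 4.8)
The plan is to reduce the statement to a direct application of Theorem \ref{mainResult}, after translating the flow so that its asymptotic limit becomes the origin. Write $u(t) = e^{t\Delta_1^{\Gamma}}u_0$ and set $w(t) := u(t) - \overline{u_0}$. If $u_0$ is constant, then $w \equiv 0$, $T_{ex}(u_0) = 0$, and both chains of inequalities reduce to $0 \le 0$; so assume from now on that $u_0$ is not constant. Since $\int_\Gamma \z'\,dx = 0$ for every $\z \in X_K(\Gamma)$ (as already noted, via Green's formula \eqref{Kintbpart}), the functional $TV_\Gamma$, and hence $\mathcal{F}_\Gamma$, is invariant under the addition of constants; consequently $\partial\mathcal{F}_\Gamma(u+c) = \partial\mathcal{F}_\Gamma(u)$ for every constant $c$. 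Therefore $w$ is the strong solution of $w'(t) + \partial\mathcal{F}_\Gamma(w(t)) \ni 0$ with datum $w(0) = u_0 - \overline{u_0}$. By the mass conservation of Proposition \ref{propert1}, $\int_\Gamma w(t)\,dx = 0$ for all $t \ge 0$; in particular $w(0) \in L^2(\Gamma)_0$ and $w(t) \in L^2(\Gamma)_0 \cup \{0\}$ along the whole trajectory.

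Next I would record the structural properties of $\mathcal{F}_\Gamma$ already established in this section: it is convex and lower semi-continuous on $L^2(\Gamma)$, absolutely $1$-homogeneous, has dense domain $D(\mathcal{F}_\Gamma) = BV(\Gamma)$ (Theorem \ref{chasubd} and Theorem \ref{embedding}), and is coercive with $\lambda_1(\mathcal{F}_\Gamma) = \lambda_\Gamma > 0$ — the latter identity being immediate from the definitions of $\lambda_1$ and $\lambda_\Gamma$ together with the coercivity proved above. Hence Theorem \ref{mainResult} applies verbatim with $J = \mathcal{F}_\Gamma$, $\mathcal{H} = L^2(\Gamma)$, $\mathcal{H}_0 = L^2(\Gamma)_0$ and the initial datum $w(0) \in \mathcal{H}_0$. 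Note also that the extinction time of $w$ coincides with $T_{ex}(u_0)$, because $w(t) = 0$ if and only if $u(t) = \overline{u_0}$.

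Applying part (i) of Theorem \ref{mainResult} then gives $T_{ex}(u_0) = T_{\rm ex}(w(0)) \le \Vert w(0)\Vert / \lambda_1(\mathcal{F}_\Gamma) = \Vert u_0 - \overline{u_0}\Vert_{L^2(\Gamma)} / \lambda_\Gamma$, which is \eqref{exttin}. Part (iii) gives $\lambda_\Gamma (T_{ex}(u_0) - t) \le \Vert w(t)\Vert \le \Lambda(t)(T_{ex}(u_0) - t)$ with $\Lambda(t) = \mathcal{F}_\Gamma(w(t)) / \Vert w(t)\Vert$; using once more that $\mathcal{F}_\Gamma(w(t)) = \mathcal{F}_\Gamma(u(t))$ by the constant-invariance and that $\Vert w(t)\Vert = \Vert u(t) - \overline{u_0}\Vert_{L^2(\Gamma)}$, this is exactly the second displayed chain with the stated $\Lambda$.

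The only genuine point requiring care is the first paragraph: verifying that the translated curve $w$ is again a strong solution of the gradient flow of the \emph{same} functional $\mathcal{F}_\Gamma$, and that it remains in the subspace $\mathcal{H}_0$, so that the hypotheses of Theorem \ref{mainResult} are satisfied. Everything afterward is a mechanical substitution. It is worth stating explicitly in the write-up that $\lambda_\Gamma$ is precisely the Rayleigh quotient $\lambda_1(\mathcal{F}_\Gamma)$ entering that theorem, so that the constants match.
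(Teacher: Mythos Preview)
Your proposal is correct and takes essentially the same approach as the paper: both reduce to a direct application of Theorem~\ref{mainResult} after observing that $\mathcal{F}_\Gamma$ (and hence its subdifferential) is invariant under the addition of constants, so that the translated flow $w(t)=u(t)-\overline{u_0}$ is again a solution of the gradient flow with datum in $L^2(\Gamma)_0$. The paper's proof is more terse---it simply cites \cite[Proposition~A.3]{BB} for the translation invariance---whereas you spell out the argument via Green's formula and mass conservation, but the substance is identical.
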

  \begin{proof} It is a direct application of Theorem \ref{mainResult}, having in mind that for any
  constant function $v_0$ and any $u_0 \in L^2(\Gamma)$, we have   $\mathcal{F}_{\Gamma}(u_0 +
  \overline{u_0}) = \mathcal{F}_{\Gamma}(u_0)$ and  $\partial \mathcal{F}_{\Gamma}(u_0 + \overline{u_0}) =
  \partial \mathcal{F}_{\Gamma}(u_0)$ (see \cite[Proposition A.3]{BB}).

\end{proof}

 To obtain a  lower bound on the extinction time, we introduce the following space which,  in the
 continuous setting, was introduced in \cite{Meyer}:
 $$G_m(\Gamma):= \{ v \in L^2(\Gamma) \ : \ \exists \z \in  X_K(\Gamma), \   v = -\z' \ \hbox{a.e.
 in} \Gamma \},$$
 and consider in $G_m(\Gamma)$ the norm
$$\Vert v \Vert_{m,*} := \inf\{\Vert \z \Vert_\infty \ : \z \in  X_K(\Gamma), \ v = -\z' \
\hbox{a.e. in} \ \Gamma \}.$$

Note that, for $v \in G_m(\Gamma)$, we have that there exists $\z_v\in X(\Gamma)$, such that $v = -\z'_v$
and $\Vert v \Vert_{m,*} = \Vert \z_v \Vert_\infty$.

From the proof of Theorem \ref{chasubd}, for $f \in G_m(\Gamma)$, we have

 \begin{equation}\label{1meyer1}
 \Vert f \Vert_{m,*}:= \sup \left\{ \left\vert \int_\Gamma f(x) u(x) dx \right\vert   : u \in BV(\Gamma),
 \ TV_{\Gamma}(u) \leq 1\right\},
 \end{equation}
 and, moreover,
\begin{equation}\label{saiiN}\partial \mathcal{F}_\Gamma (u) = \left\{ v \in L^2(\Gamma)  \ : \ \Vert v
\Vert_{m,*} \leq 1, \  \int_{\Gamma} u(x) v(x) dx = TV_\Gamma (u)\right\}.\end{equation}

 The next result is consequence of \cite[Proposition 6.9]{BBChN}. We give the proof to be
self-contained

   \begin{theorem}  Given   $u_0 \in L^2(\Gamma)$, we have
 \begin{equation}
 T_{\rm ex}(u_0) \geq \Vert u_0 - \overline{u_0}\Vert_{m,*}.
 \end{equation}
 \end{theorem}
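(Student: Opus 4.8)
The plan is to write $u_0-\overline{u_0}$ as a Bochner time-integral of the velocity field of the flow and then to feed that expression into the duality formulas \eqref{1meyer1} and \eqref{saiiN} established in the proof of Theorem \ref{chasubd}.

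Put $u(t)=e^{t\Delta_1^{\Gamma}}u_0$; recall that $u\in W^{1,1}(0,T;L^2(\Gamma))$ for every $T>0$ and that $-u'(t)\in\partial\mathcal F_\Gamma(u(t))$ for a.e. $t>0$, since $u$ is the strong solution of \eqref{ACP1}. If $u_0$ is constant the inequality is trivial (both sides vanish), so I may assume $T_{\rm ex}(u_0)>0$. The first step is the identity
$$u_0-\overline{u_0}=u(0)-u(T_{\rm ex}(u_0))=-\int_0^{T_{\rm ex}(u_0)}u'(t)\,dt\qquad\hbox{in }L^2(\Gamma),$$
which follows from the fundamental theorem of calculus for absolutely continuous curves in a Hilbert space together with the fact that $u(T_{\rm ex}(u_0))=\overline{u_0}$: indeed $t\mapsto u(t)$ is continuous and $T_{\rm ex}(u_0)$ is, by definition, the infimum of the times after which $u(\cdot)\equiv\overline{u_0}$. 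The second step is the pointwise bound $\|u'(t)\|_{m,*}\le 1$ for a.e. $t\in(0,T_{\rm ex}(u_0))$, which is immediate from $-u'(t)\in\partial\mathcal F_\Gamma(u(t))$ and the characterization \eqref{saiiN}.

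The third and final step is to pair the above identity with a test function. Fix $w\in BV(\Gamma)$ with $TV_\Gamma(w)\le 1$; since $w\in L^\infty(\Gamma)\subset L^2(\Gamma)$, the linear functional $\eta\mapsto\int_\Gamma\eta\,w\,dx$ is continuous on $L^2(\Gamma)$, hence commutes with the Bochner integral, and therefore, using \eqref{1meyer1} applied to $u'(t)$,
$$\left|\int_\Gamma(u_0-\overline{u_0})\,w\,dx\right|=\left|\int_0^{T_{\rm ex}(u_0)}\int_\Gamma u'(t)\,w\,dx\,dt\right|\le\int_0^{T_{\rm ex}(u_0)}\left|\int_\Gamma u'(t)\,w\,dx\right|dt$$
$$\le\int_0^{T_{\rm ex}(u_0)}\|u'(t)\|_{m,*}\,dt\le T_{\rm ex}(u_0).$$
Taking the supremum over all such $w$ and applying \eqref{1meyer1} once more, now to $u_0-\overline{u_0}$, gives $\|u_0-\overline{u_0}\|_{m,*}\le T_{\rm ex}(u_0)$, which is the claim.

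This is a short argument; the substantive work was already done in establishing the representation formulas \eqref{1meyer1} and \eqref{saiiN}. The only points deserving a little care — and where I would be most careful when writing out the full proof — are the identification $u(T_{\rm ex}(u_0))=\overline{u_0}$ (handled by continuity of the trajectory and the definition of the extinction time as an infimum) and the Fubini-type interchange between $\int_\Gamma\,\cdot\,w\,dx$ and the time integral (justified by the continuity of that functional on $L^2(\Gamma)$); I do not expect either to constitute a genuine obstacle.
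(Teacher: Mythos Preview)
Your proposal is correct and follows essentially the same approach as the paper: write $u_0-\overline{u_0}=-\int_0^{T_{\rm ex}(u_0)}u'(t)\,dt$, use that $-u'(t)\in\partial\mathcal F_\Gamma(u(t))$ to bound the pairing of $u'(t)$ with any $w$ satisfying $TV_\Gamma(w)\le1$ by $1$, and integrate in time. The only cosmetic difference is that the paper invokes Proposition~\ref{intpartt} (namely $\int_\Gamma v\,w\,dx\le\mathcal F_\Gamma(w)$ for $v\in\partial\mathcal F_\Gamma(u)$) rather than the equivalent pair \eqref{1meyer1}--\eqref{saiiN}; your version is in fact slightly more explicit about the Fubini step and the identification $u(T_{\rm ex}(u_0))=\overline{u_0}$.
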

 \begin{proof} If $u(t):= e^{t\Delta^\Gamma_1}u_0$, we have
 $$u_0 - \overline{u_0}= - \int_0^{T_{ex}(u_0)}   u'(t)dt.$$
Then, by Proposition \ref{intpartt}, we get
 $$\Vert u_0 - \overline{u_0} \Vert_{m,*} = \sup \left\{\int_\Gamma w (u_0 -\overline{u_0})  dx \ : \
 TV_m(w) \leq 1  \right\} $$ $$= \sup \left\{ \int_\Gamma w \left(  \int_0^{T_{\rm ex}(u_0)} -
 u'(t)dt\right) dx \ : \ TV_m(w) \leq 1  \right\}$$ $$= \sup \left\{ \int_0^{T_{\rm ex}(u_0)} \int_\Gamma
 -w    u'(t)dt  dx \ : \ TV_m(w) \leq 1  \right\}$$  $$\leq  \sup \left\{\int_0^{T_{\rm ex}(u_0)} TV_m(w)
 dt \ : \ TV_m(w) \leq 1  \right\} = T_{ex}(u_0).$$
 \end{proof}

\subsection{Explicit Solutions} Let us now see that we can compute explicitly the evolution of
characteristic functions. First we need to do the computations for the Neumann problem for the total
variation flow in an interval $(0, L)$ of $\R$, that is, for the problem
\begin{equation}\label{NeumannR}
\left\{ \begin{array}{lll} u_t = {\rm div} \left( \frac{Du}{\vert Du \vert}\right), \quad &\hbox{in} \ \
]0,T[ \times ]0,L[, \\[10pt] \frac{Du}{\vert Du \vert} \cdot \eta =0, \quad &\hbox{in} \ \ ]0,T[ \times
\{0,L \}, \\[10pt] u(0) = u_0.  \end{array} \right.
\end{equation}

In \cite{ABCM0}, we have proved the existence and uniqueness of solutions to problem \eqref{NeumannR},
where the concept of solution is the following. For $u_0 \in L^2(]0,L[)$ we say that $u \in C(0,T;
L^2(]0,L[) \cap W^{1,1}(0,T;L^2(]0,L[)$ is a weak solution of \eqref{NeumannR} if $u(0) = u_0$, $u(t) \in
BV((0,L)) $  and there exists $\z \in L^\infty(0,T; L^\infty(]0,L[)$,$\Vert \z(t) \Vert_{L^\infty(]0,L[)}
\leq 1$, for almost all $t \in ]0,T[$, such that
\begin{equation}\label{TNeqq2N}
   u'(t) = \z(t)^{\prime},\quad  \hbox{in} \ \mathcal{D}^{\prime}(]0,L[),
\end{equation}
\begin{equation}\label{frontN}
 \z(t)(0) = \z(t)(L)=0,
 \end{equation}
and
\begin{equation}\label{Teqq1N}
\int_0^L \vert Du(t)\vert  = \int_0^L   \z(t) Du(t).
\end{equation}

\begin{lemma}\label{inR} Let $0 < a,b,c < L$ and $k >0$. Then, we have
\begin{itemize}
\item[(1)]  If $u_0= k\1_{(0,a)}$, then the solution of \eqref{NeumannR} is given by
$$u(t)= \left( k - \frac{t}{a} \right) \1_{]0,a,[} + \frac{t}{L -a} \1_{]a,L]}, \quad \hbox{for} \ 0
\leq t \leq T,$$
where $T = \frac{ka(L-a)}{L}$,
and
$$u(t) =     \frac {ka}{L} \1_{]0,L[} , \quad \hbox{for} \  t \geq T.$$

\item[(2)] If $u_0= k\1_{]b,L[}$, then the solution of \eqref{NeumannR} is given by
$$u(t)= \left( k - \frac{t}{L-b} \right) \1_{]b,L[} + \frac{t}{b} \1_{]0,b[}, \quad \hbox{for} \ 0 \leq
t \leq T,$$
where $T = \frac{k(L-b)}{L}$,
and
$$u(t) = k\frac{L -b}{L} \1_{]0,L[} , \quad \hbox{for} \  t \geq T.$$

\item[(3)]  Let $0 < k_1 < k_2$. If $u_0= k_1 \1_{]0,c[} + k_2 \1_{]c,L[}$, then the solution of
    \eqref{NeumannR} is given by
$$u(t)= \left( k_1 + \frac{t}{c} \right) \1_{]0,c[} + \left( k_2 -\frac{t}{L - c} \right) \1_{]c,L[ },
\quad \hbox{for} \ 0 \leq t \leq T,$$
where $T = \frac{(k_2-k_1)c(L-c)}{L}$,
and
$$u(t) = \left(k_1 + \frac{(k_2-k_1)(L-c)}{L} \right) \1_{]0,L[} , \quad \hbox{for} \  t \geq T.$$

\item[(4)] Assume that  $0 < a < b < L$ and also that $L < a+b$ . If $u_0= k\1_{]a,b[}$, then the
    solution of \eqref{NeumannR} is given by
$$u(t)= \frac{t}{a} \1_{]0,a[} + \left(k -  \frac{2}{b-a}t \right)  \1_{]a,b[} + \frac{t}{L-b}
\1_{]b,L[}, \quad \hbox{for} \ 0 \leq t \leq T_1,$$
where $T_1 =  \frac{(b-a)(L-b)}{2L-(a+b)}k$,
$$u(t)= \left(  \frac{T_1}{a}+ \frac{t}{a} \right) \1_{]0,a[} + \left( \left(k -\frac{2}{b-a} T_1\right)
-\frac{t}{L - a} \right) \1_{]a,L[ }, \quad \hbox{for} \ T_1 \leq t \leq T_2,$$
where $$T_2 = \frac{\left(\left(k -\frac{2}{b-a} T_1\right)-\frac{T_1}{a} \right)a(L-a)}{L},$$
and
$$u(t) = \left(  \frac{T_1+T_2}{a} \right) \1_{]0,L[}, \quad \hbox{for} \ t > T_2.$$

\end{itemize}

\end{lemma}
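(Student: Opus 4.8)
The plan is to exploit the uniqueness of weak solutions of \eqref{NeumannR}, established in \cite{ABCM0} (it also follows from the Brezis--Komura theory applied to the energy $u\mapsto\int_0^L|Du|$ on $L^2(]0,L[)$). Hence, in each of the four cases it suffices to take the function $u(t)$ written in the statement as an ansatz and to check that it satisfies the definition of weak solution recalled just before the lemma: $u\in C(0,T;L^2(]0,L[))\cap W^{1,1}(0,T;L^2(]0,L[))$, $u(0)=u_0$, $u(t)\in BV(]0,L[)$, and there is $\z(t)$ with $\|\z(t)\|_{L^\infty(]0,L[)}\le1$ for a.e.\ $t$ satisfying \eqref{TNeqq2N}, \eqref{frontN} and \eqref{Teqq1N}. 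Each ansatz is piecewise constant in $x$ (so $u(t)\in BV(]0,L[)$) and Lipschitz affine in $t$ on each of the indicated time subintervals (so the $C\cap W^{1,1}$ regularity and the initial condition $u(0)=u_0$ are immediate). The whole content is therefore the construction of a suitable $\z(t)$.

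The construction is the same in all cases. On each time subinterval, $u'(t)=\partial_t u(t)$ is a step function on $]0,L[$ with zero mean over $]0,L[$, the latter being the mass conservation plainly visible in the formulas (e.g.\ $\int_0^L u(t)\,dx$ is independent of $t$). Put $\z(t)(x):=\int_0^x\partial_t u(t)(s)\,ds$. Then $\z(t)$ is continuous and affine on each subinterval between consecutive jump points of $u(t)$ (so $\z(t)\in W^{1,2}(]0,L[)$), $\z(t)(0)=0$, $\z(t)(L)=0$ since $\partial_t u(t)$ has zero mean, and \eqref{TNeqq2N}, \eqref{frontN} hold by construction; moreover $\z(t)$ is constant in $t$ on each phase, so $\z\in L^\infty(0,T;L^\infty(]0,L[))$. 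Because $\z(t)$ is affine between jump points, $\|\z(t)\|_{L^\infty}\le1$ amounts to checking the finitely many nodal values of $\z(t)$: these are $0$ at the endpoints $0,L$ and one value at each interior jump of $u(t)$. Finally, since $u(t)$ is piecewise constant, $Du(t)=\sum_j\big(u(t)(x_j^+)-u(t)(x_j^-)\big)\,\delta_{x_j}$, so \eqref{Teqq1N} reads $\sum_j|u(t)(x_j^+)-u(t)(x_j^-)|=\sum_j\z(t)(x_j)\big(u(t)(x_j^+)-u(t)(x_j^-)\big)$; together with $|\z(t)|\le1$ this is equivalent to: $\z(t)(x_j)$ has the same sign as the jump $u(t)(x_j^+)-u(t)(x_j^-)$, and has absolute value $1$, at every jump point $x_j$. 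So everything reduces to a short computation showing that $\z(t)$ defined above has nodal values $0$ at $0,L$ and $\pm1$ at the interior jumps with the correct sign; this sign is the right one throughout a phase because the sign of each jump of $u(t)$ does not change before the two adjacent plateaus meet.

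Parts (1), (2) and (3) are single-phase: $u(t)$ has exactly one jump (at $a$, $b$, $c$ respectively), $\z(t)$ is the continuous function, affine on each of the two resulting subintervals, that vanishes at $0$ and $L$ and equals $\pm1$ at that jump, and one checks directly that the jump of $u(t)$ keeps its sign on $[0,T)$, that at $t=T$ one reaches the constant $\frac1L\int_0^L u_0\,dx$, and that for $t\ge T$ the constant solution is covered by taking $\z(t)=0$. Part (4) has two phases. In the first, the two outer plateaus grow at rates $1/a$ and $1/(L-b)$ and the middle one decreases at rate $2/(b-a)$; the hypothesis $L<a+b$, i.e.\ $L-b<a$, forces $1/(L-b)>1/a$, so the right plateau rises strictly faster and is the one that first meets the descending middle level, at the time $T_1=\frac{k(b-a)(L-b)}{2L-a-b}$ obtained by equating the two levels. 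On $[0,T_1)$ the levels are ordered (left)$<$(right)$<$(middle), hence $u(t)$ jumps up at $a$ and down at $b$, and the field furnished by the construction has $\z(t)(a)=1$, $\z(t)(b)=-1$, which certifies that the phase-1 ansatz solves \eqref{NeumannR}. At $t=T_1$ the ansatz collapses to a two-level function, lower on $]0,a[$ and higher on $]a,L[$, which is exactly the datum of part (3) with $c=a$; by uniqueness, the flow for $t\ge T_1$ is the one of part (3) restarted at time $T_1$, producing the second-phase formula and the terminal constant $\frac1L\int_0^L u_0\,dx$.

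The only genuinely delicate point is the bookkeeping in part (4): one has to verify that it is the right plateau and not the left one that first catches the middle level --- this is precisely where $L<a+b$ enters --- and that the ordering of the three levels, hence the sign of each jump of $u(t)$, stays constant on each phase, so that a single admissible $\z(t)$ with nodal values in $\{-1,0,1\}$ yields \eqref{Teqq1N}. Everything else --- continuity in $t$ across the gluing at $T_1$, agreement of the two phase formulas there, and the identification of the terminal state with the mean of $u_0$ --- is a routine substitution.
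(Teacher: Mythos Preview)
Your proposal is correct and follows essentially the same approach as the paper: in each case one exhibits a piecewise-affine vector field $\z(t)$ with $\z(t)(0)=\z(t)(L)=0$ and nodal values $\pm1$ at the jump points, verifies \eqref{TNeqq2N}--\eqref{Teqq1N}, and appeals to uniqueness; your uniform recipe $\z(t)(x)=\int_0^x\partial_t u(t)(s)\,ds$ produces exactly the explicit fields the paper writes down (e.g.\ $\z(t)(x)=-x/a$ on $[0,a]$, $\z(t)(x)=(x-L)/(L-a)$ on $[a,L]$ in case (1)), and your reduction of the second phase of (4) to case (3) is precisely what the paper does.
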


\begin{proof}
(1): Given the initial datum $u_0= \1_{]0,a[}$ we look for a solution of the form $$u(t) = \alpha(t)
\1_{]0,a[} + \beta(t) \1_{]a,L[}$$ on some interval $]0,T[$ defined by the inequality $\alpha(t) >
\beta(t)$ for $t \in ]0,T[$, and $\alpha(0) = k$, $\beta(0) =0$. Then, we shall look for some $\z \in
L^\infty(0,T; L^\infty(]0,L[)$, $\Vert \z(t) \Vert_{L^\infty(]0,L[)} \leq 1$ for almost all $t \in ]0,T[$,
such that
\begin{equation}\label{TNeqq2NE}
   u'(t) = \z(t)^{\prime},\quad  \hbox{in} \ \mathcal{D}^{\prime}(]0,a[),
\end{equation}
\begin{equation}\label{TNeqq2NEE}
   u'(t) = \z(t)^{\prime},\quad  \hbox{in} \ \mathcal{D}^{\prime}(]a,L[),
\end{equation}
\begin{equation}\label{frontNE}
 \z(t)(0) = \z(t)(L)=0,
 \end{equation}
and
\begin{equation}\label{Teqq1NE}
\int_0^L \vert Du(t)\vert  = \int_{\Gamma}   \z(t) Du(t).
\end{equation}

For $0 \leq t \leq T$, we define
$$ \z(t)(x):= \left\{ \begin{array}{ll} - \frac{x}{a}, \quad &\hbox{if} \ 0 \leq x \leq a, \\[10pt]
\frac{x - L}{L - a}, \quad &\hbox{if} \ a \leq x \leq L.\end{array} \right.$$
Integrating equation \eqref{TNeqq2NE}  over $(0,a)$, we obtain
$$\alpha'(t) a  = \int_0^a  \z(t)^{\prime}(x) dx = \z(t)(a) =-1.$$
Thus $\alpha'(t) = - \frac{1}{a}$ and, therefore, $\alpha(t) = k - \frac{t}{a}$. Similarly, we deduce that
$\beta'(t) = \frac{1}{L -a}$, hence $\beta(t) =  \frac{t}{L-a}$. Then, the first $T$ such that $\alpha(T)
= \beta(T)$, is given by $T = \frac{ka(L-a)}{L}$. An easy computation shows that \eqref{Teqq1NE} holds for
all $t \in ]0,T[$. Finally, if we take $\z(t) =0$ for $t >T$, we have that $$u(t) = k\left(1 - \frac{L
-a}{L} \right) \1_{]0,L[}$$
is a solution for $t \geq T.$

The proof of (2) is similar to the proof of (1), taking in this case, for $0 \leq t \leq T$,
$$ \z(t)(x):= \left\{ \begin{array}{ll}  \frac{x}{b}, \quad &\hbox{if} \ 0 \leq x \leq b, \\[10pt]
\frac{L-x}{L - b}, \quad &\hbox{if} \ b \leq x \leq L\end{array} \right.$$

\noindent (3): We look for a solution of the form $$u(t) = \alpha(t)  \1_{]0,c[} + \beta(t) \1_{]c,L[}$$
on some interval $]0,T[$ defined by the inequality $\alpha(t) < \beta(t)$ for $t \in ]0,T[$, and
$\alpha(0) = k_1$, $\beta(0) =k_2$. Working as in the proof of (1), we shall look for some $\z \in
L^\infty(0,T; L^\infty(]0,L[)$, with $\Vert \z(t) \Vert_{L^\infty(]0,L[)} \leq 1$ for almost all $t \in
]0,T[$ and  $\z(t)(0) = \z(t)(L)=0$, satisfying
$$\alpha'(t) c  = \int_0^c  \z(t)^{\prime}(x) dx = \z(t)(c)$$
and
$$\beta'(t) (L-c)  = \int_c^L  \z(t)^{\prime}(x) dx = -\z(t)(c).$$
Then,
$$\alpha(t) = k_1 + \frac{\z(t)(c)}{c}, \quad \beta(t) = k_2 - \frac{\z(t)(c)}{L-c}.$$
Hence, taking, for $0 \leq t \leq T$,
$$ \z(t)(x):= \left\{ \begin{array}{ll}  \frac{x}{c}, \quad &\hbox{if} \ 0 \leq x \leq c, \\[10pt]
\frac{L-x}{L - c}, \quad &\hbox{if} \ c \leq x \leq L,\end{array} \right.$$
it is easy to see that   $$u'(t) = \z(t)^{\prime},\quad  \hbox{in} \ \mathcal{D}^{\prime}(]0,L[),$$
and
$$\int_0^L \vert Du(t)\vert  = \int_{\Gamma}   \z(t) Du(t).$$
Therefore, for $0 < t \leq T = \frac{(k_2-k_1)c(L-c)}{L}$, the solution is given by
$$u(t)= \left( k_1 + \frac{t}{c} \right) \1_{]0,c[} + \left( k_2 -\frac{t}{L - c} \right) \1_{]c,L[ }.$$
Moreover,
$$u(t) = \left(k_1 + \frac{(k_2-k_1)(L-c)}{L} \right) \1_{]0,L[} , \quad \hbox{for} \  t \geq T.$$

\noindent (4): In this case we look for a solution of the form
$$u(t) = \alpha(t)  \1_{]0,a[} + \beta(t) \1_{]a,b[} + \gamma(t) \1_{]b,L[} $$ on some interval $(0,T)$
defined by the inequality $\alpha(t) < \beta(t)$, $\gamma(t) < \beta(t)$  for $t \in ]0,T[$, and
$\alpha(0) = \gamma(t) =0$, $\beta(0) =k$.  Working as in the proof of (1),  we need to find a vector
field $\z \in L^\infty(0,T; L^\infty(]0,L[)$, $\Vert \z(t) \Vert_{L^\infty(]0,L[)} \leq 1$, for almost all
$t \in ]0,T[$, satisfying
$$\alpha(t)= \frac{\z(t)(a)}{a}, \quad \beta(t) = k + \left( \frac{\z(t)(b)- \z(t)(a)}{b-a} \right)t,
\quad \gamma(t) = - \frac{\z(t)(b)}{L-b}.$$
Now, if we take, for $0 \leq t \leq T$,
$$ \z(t)(x):= \left\{ \begin{array}{lll}  \frac{x}{a}, \quad &\hbox{if} \ 0 \leq x \leq a, \\[10pt] -2
\frac{x-a}{b-a} +1, \quad &\hbox{if} \ a \leq x \leq b, \\[10pt]  \frac{x - L}{L-b}, \quad &\hbox{if} \ b
\leq x \leq L.\end{array} \right.$$
Hence,
$$\alpha(t)= \frac{t}{a}, \quad \beta(t) = k + \left( \frac{-2}{b-a} \right)t, \quad \gamma(t) =
\frac{t}{L-b}.$$
Since we are assuming that $L-b < a$, we have $\alpha(t) < \gamma(t)$. Then, for
$$T_1:= \frac{(b-a)(L-b)}{2L-(a+b)}k,$$
we have $\beta(T_1) = \gamma(T_1)$. Hence, for $0 < t \leq T_1$, if
$$u(t) = \frac{t}{a} \1_{]0,a[} + \left(k -  \frac{2}{b-a}t \right)  \1_{]a,b[} + \frac{t}{L-b}
\1_{]b,L[},$$
       it is easy to see that   $$u'(t) = \z(t)^{\prime}\quad  \hbox{in} \ \mathcal{D}^{\prime}(]0,L[),$$
and
$$\int_0^L \vert Du(t)\vert  = \int_{\Gamma}   \z(t) Du(t).$$
Therefore, for $0 < t \leq T_1 $, $u(t)$ is the solution. Now,
$$u(T_1) =  \frac{T_1}{a} \1_{]0,a[} + \left(k -\frac{2}{b-a} T_1\right)  \1_{]a,L[}.$$
Then, applying (3), we have
$$u(t)= \left(  \frac{T_1}{a}+ \frac{t}{a} \right) \1_{]0,a[} + \left( \left(k -\frac{2}{b-a} T_1\right)
-\frac{t}{L - a} \right) \1_{]a,L[ }, \quad \hbox{for} \ T_1 \leq t \leq T_2,$$
where
$$T_2 = \frac{a(L-a)}{L}\left(k - T_1\frac{a+b}{a(b-a)}\right).$$
Finally, for $t > T_2$, the solution in given by
$$u(t) = \left(  \frac{T_1+T_2}{a} \right) \1_{]0,L[}.$$
\end{proof}

\begin{remark}{\rm Let us  point out that it is obtained in \cite{BF} that for the initial data $u_0 = k_1
\1_{(a,b)} + k_2 \1_{(b,L)}$ with $0 < k_1 < k_2$,  the solution of \eqref{NeumannR} is given by
$$u(t) = \frac{t}{a} \1_{]0,a[} + k_1 \1_{]a,b[} + \left(k_2 - \frac{t}{L-b} \right)\1_{]b,L[}, $$
for $0 \leq t \leq T_1$, with
$$T_1 = \min \{ ak_1, (k_2 - k_1)(L - b) \}.$$

}
\end{remark}

 We are now going to find an explicit solution in the case of a simpler metric graph in order to see the
 difference in behaviour with the case of the total variation flow in an interval with Neumann boundary
 conditions that we have considered in the above result.

\begin{example}{\rm Consider the metric graph $\Gamma$ with three vertices and two  edges, that is
$V(\Gamma) = \{\v_1, \v_2, \v_3 \}$ and $E(\Gamma) = \{ \e_1:=[\v_1, \v_2], \e_2:=[\v_2, \v_3], \}$. Let
$0 < a < \ell_{\e_2}$ and assume that $\ell_{\e_1} > \ell_{\e_2} - a$. We are going to find the solution
of the total variation flow for the initial datum $u_0:= k \1_D$, with $k >0$ and $D:= (\v_2,
c_{\e_2}^{-1}(a))$.

\begin{center}
\begin{tikzpicture}

\node[below] at (-4,-0.1) {$\v_1$};
\node[below]  at (1,-0.1) {$\v_2$};
\node[below]  at (5,-0.1) {$\v_3$};

\draw[line width=1.5pt,->]  (-4,0) -- (-2,0);
\draw[line width=1.5pt,->]   (-2,0) -- (2.5,0);
\draw[line width=1.5pt]    (-4,0) -- (5,0);

\node[above] at (-2,0.1) {$\e_1$};
\node[above]  at (2.5,0.1) {$\e_2$};
\node[below] at (3,-0.1) {$c^{-1}_{\e_1}(a)$};

\draw[fill=black] (-4,0) circle (2pt);
\draw[fill=black] (1,0) circle (2pt);
\draw[fill=black] (5,0) circle (2pt);
\draw[fill=black] (3,0) circle (2pt);
\end{tikzpicture}
\end{center}

 We look for solutions of the form:
 $$[u(t)]_{\e_1} = \alpha_1(t) \1_{]0,\ell_{\e_1}[} \quad \alpha_1(0) =0,$$
 $$[u(t)]_{\e_2} = \alpha_2(t) \1_{]0,a[} + \alpha_3(t) \1_{]a, \ell_{\e_2}[}, \quad \alpha_2(0) =k, \
 \alpha_3(0) =0,$$
 for all $0 < t \leq T$ such that $$\alpha_1(t) \leq \alpha_2(t), \quad \alpha_2(t) \leq \alpha_3(t).$$
 Then, we need to find $\z(t) \in X_K(\Gamma)$, with $\Vert \z(t) \Vert_\infty \leq 1$, satisfying:
 \begin{equation}\label{E1Imp}
 [u(t)]_{\e_i}^{\prime} = [\z(t)]_{\e_i}^{\prime},\quad i=1,2, \ \hbox{that is}
  \end{equation}
  $$\alpha^{\prime}_1(t) \1_{]0, \ell_{\e_1}[} = [\z(t)]_{\e_1}^{\prime},\quad \alpha^{\prime}_2(t)
  \1_{]0,a[} + \alpha_3^{\prime}(t) \1_{]a, \ell_{\e_2}[} = [\z(t)]_{\e_2}^{\prime}.$$
 \begin{equation}\label{E2Imp} \begin{array}{ll}
TV_\Gamma (u(t)) = \displaystyle\int_{\Gamma}   \z(t) Du(t) \\[10pt] -  \frac{1}{2}  \left([\z(t)]_{\e_1}
(\v_2)([u(t)]_{\e_1}(\v_2) - [u(t)]_{\e_2}(\v_2)) + [\z(t)]_{\e_2} (\v_2)([u(t)]_{\e_2}(\v_2) -
[u(t)]_{\e_1}(\v_2))  \right). \end{array}
 \end{equation}
By \eqref{Form1Igual}, we can write \eqref{E2Imp} as
$$ \begin{array}{ll}
\vert Du(t) \vert(\Gamma) + JV_\Gamma (u(t)) = \displaystyle\int_{\Gamma}   \z(t) Du(t) \\[10pt] =-
\frac{1}{2}  \left([\z(t)]_{\e_1} (\v_2)([u(t)]_{\e_1}(\v_2) - [u(t)]_{\e_2}(\v_2)) + [\z(t)]_{\e_2}
(\v_2)([u(t)]_{\e_2}(\v_2) - [u(t)]_{\e_1}(\v_2))  \right). \end{array}$$
 Now,
 $$Du(t) = (\alpha_3(t)- \alpha_2(t)) \delta_a.$$
 Hence,
 $$\vert Du(t) \vert(\Gamma) = (\alpha_2(t)- \alpha_3(t)),$$
 and
 $$ \int_{\Gamma}   \z(t) Du(t) =  (\alpha_3(t)- \alpha_2(t))[\z]_{\e_2}(a).$$
 Thus, if we assume that  $[\z]_{\e_2}(a) = -1$, and having in mind that $[\z(t)]_{\e_1} (\v_2) = -
 [\z(t)]_{\e_2} (\v_2)$,   we have that  we can rewrite \eqref{E2Imp} as
 $$ JV_\Gamma (u(t)) = -  [\z(t)]_{\e_1} (\v_2)([u(t)]_{\e_1}(\v_2) - [u(t)]_{\e_2}(\v_2)) = -
 [\z(t)]_{\e_1}(\v_2) (\alpha_1(t)- \alpha_2(t)).$$
 Now,
 $$JV_\Gamma (u(t)) =  \vert [u(t)]_{\e_1}(\v_2) - [u(t)]_{\e_2}(\v_2)\vert =\alpha_2(t)- \alpha_1(t),$$
 and then, \eqref{E2Imp}  is equivalent to
$$
 \alpha_2(t)- \alpha_1(t)  =   -  [\z(t)]_{\e_1}(\v_2) (\alpha_1(t)- \alpha_2(t)).
$$
Therefore, if $[\z(t)]_{\e_1}(\v_2) =[\z(t)]_{\e_1}(\ell_{\e_1}) =1$, we have that \eqref{E2Imp}  holds.

 We define
   $$[\z(t)]_{\e_1}(x):= \frac{x}{\ell_{\e_1}}, \quad \hbox{if} \ \ 0 \leq x \leq \ell_{\e_1}, $$
   and
    $$[\z(t)]_{\e_2}(x):= \left\{    \begin{array}{ll} -\frac{2x}{a} +1, \quad &\hbox{if} \ \ 0 \leq x
    \leq a, \\[10pt] \frac{  x - \ell_{\e_2}}{\ell_{\e_2}- a}, \quad &\hbox{if} \ \ a \leq x \leq
    \ell_{\e_2}. \end{array} \right.$$
Note that
$$[\z(t)]_{\e_1}(\v_2) + [\z(t)]_{\e_2}(\v_2) = [\z(t)]_{\e_1}(\ell_{\e_1}) - [\z(t)]_{\e_2}(0)= 0,$$
thus $\z(t) \in X_K(\Gamma)$.

 On the other hand, integrating in \eqref{E1Imp}, we get
  $$\alpha^{\prime}_1(t)\ell_{\e_1} = \int_0^{\ell_{\e_1}} [\z(t)]_{\e_1}^{\prime} dx =
  [\z(t)]_{\e_1}(\ell_{\e_1}) \ \Rightarrow \ \alpha_1(t) =
  \frac{[\z(t)]_{\e_1}(\ell_{\e_1})}{\ell_{\e_1}}t = \frac{1}{\ell_{\e_1}}t,$$
  $$\alpha^{\prime}_2(t)a = \int_0^a [\z(t)]_{\e_2}^{\prime} dx = [\z(t)]_{\e_2}(a) - [\z(t)]_{\e_2}(0)\
  \Rightarrow \ \alpha_2(t) = k + \frac{[\z(t)]_{\e_2}(a)- [\z(t)]_{\e_2}(0)}{a} = k - \frac{2}{a} t,$$
   $$\alpha^{\prime}_3(t)(\ell_{\e_2}-a)  = \int_{a}^{\ell_{\e_2}} [\z(t)]_{\e_2}^{\prime} dx = -
   [\z(t)]_{\e_2}(a)\ \Rightarrow \ \alpha_3(t)= - \frac{[\z(t)]_{\e_2}(a)}{\ell_{\e_2}-a}t =
   \frac{1}{\ell_{\e_2}-a}t.$$

    Consequently, since $\ell_{\e_1} > \ell_{\e_2} - a$, the solution is given by
    $$[u(t)]_{\e_1} =  \frac{t}{\ell_{\e_1}} \1_{]0,\ell_{\e_1}[}, \quad \hbox{for} \ 0 \leq t \leq
    T_1,$$
       and
         $$[u(t)]_{\e_2} =  \displaystyle\left( k - \frac{2t}{a} \right) \1_{]0,a[} + \frac{t}{\ell_{\e_2}
         -a} \1_{]a,\ell_{\e_2}[}, \quad \hbox{for} \ 0 \leq t \leq T_1,$$
          where $$T_1 =  \frac{ka(\ell_{\e_2}-a)}{2\ell_{\e_2}-a}.$$
          We have
          $$[u(T_1)]_{\e_2} =  \displaystyle\left( k - \frac{2T_1}{a} \right) \1_{]0,\ell_{\e_2}[} =
          \displaystyle\left( k - \frac{2k(\ell_{\e_2}-a)}{2\ell_{\e_2}-a} \right) \1_{]0,\ell_{\e_2}[} =
          k \frac{a}{2\ell_{\e_2}-a} \1_{]0,\ell_{\e_2}[}.$$

          Now, for $t > T_1$, we look for a solution of the form
           $$[u(t)]_{\e_1} = \gamma_1(t) \1_{]0,\ell_{\e_1}[} \quad \gamma_1(T_1) =\alpha_1(T_1),$$
 $$[u(t)]_{\e_2} = \gamma_2(t) \1_{]0,\ell_{\e_2}[} \quad \gamma_2(T_1) = \alpha_2(T_1),$$
 for all $T_1 < t \leq T _2$ such that $$\gamma_1(t) \leq \gamma_2(t).$$

  Then, we need to find $\z(t) \in X_K(\Gamma)$, with $\Vert \z(t) \Vert_\infty \leq 1$, satisfying:
 \begin{equation}\label{E1ImpNN}
 [u(t)]_{\e_i}^{\prime} = [\z(t)]_{\e_i}^{\prime},\quad i=1,2, \ \hbox{that is},
  \end{equation}
  $$\gamma^{\prime}_1(t) \1_{]0, \ell_{\e_1}[} = [\z(t)]_{\e_1}^{\prime},\quad \gamma^{\prime}_2(t)
  \1_{]0,\ell_{\e_2}[} = [\z(t)]_{\e_2}^{\prime}.$$
 \begin{equation}\label{E2ImpNN} \begin{array}{ll}
TV_\Gamma (u(t)) = \displaystyle\int_{\Gamma}   \z(t) Du(t) \\[10pt] -  \frac{1}{2}  \left([\z(t)]_{\e_1}
(\v_2)([u(t)]_{\e_1}(\v_2) - [u(t)]_{\e_2}(\v_2)) + [\z(t)]_{\e_2} (\v_2)([u(t)]_{\e_2}(\v_2) -
[u(t)]_{\e_1}(\v_2))  \right). \end{array}
 \end{equation}
 By \eqref{Form1Igual}, we can write \eqref{E2ImpNN} as
$$ \begin{array}{ll}
\vert Du(t) \vert(\Gamma) + JV_\Gamma (u(t)) = \displaystyle\int_{\Gamma}   \z(t) Du(t) \\[10pt] = -
\frac{1}{2}  \left([\z(t)]_{\e_1} (\v_2)([u(t)]_{\e_1}(\v_2) - [u(t)]_{\e_2}(\v_2)) + [\z(t)]_{\e_2}
(\v_2)([u(t)]_{\e_2}(\v_2) - [u(t)]_{\e_1}(\v_2))  \right), \end{array}$$
which, having in mind that $[\z(t)]_{\e_1} (\v_2) + [\z(t)]_{\e_2} (\v_2) =0$, is equivalent to
$$\gamma_2(t) - \gamma_1(t) = \vert [u(t)]_{\e_1}(\v_2) - [u(t)]_{\e_2}(\v_2) \vert = -[\z(t)]_{\e_1}
(\v_2)(\gamma_1(t) - \gamma_2(t)).$$
Then, if $[\z(t)]_{\e_1} (\v_2) = 1$, we have that \eqref{E2ImpNN} holds.

 We define
   $$[\z(t)]_{\e_1}(x):= \frac{x}{\ell_{\e_1}}, \quad \hbox{if} \ \ 0 \leq x \leq \ell_{\e_1}, $$
   and
    $$[\z(t)]_{\e_2}(x):=  \frac{ \ell_{\e_2}- x }{\ell_{\e_2}}, \quad \hbox{if} \ \ 0 \leq x \leq
    \ell_{\e_2}. $$

    Now, integrating in \eqref{E1ImpNN}, for $T_1 < t \leq T_2$, we get
  $$\gamma^{\prime}_1(t)\ell_{\e_1} = \int_0^{\ell_{\e_1}} [\z(t)]_{\e_1}^{\prime} dx =
  [\z(t)]_{\e_1}(\ell_{\e_1}) =1 \ \Rightarrow \ \gamma_1(t) = \alpha_1(T_1) + \frac{1}{\ell_{\e_1}}t,$$
  $$\gamma^{\prime}_2(t)\ell_{\e_2} = \int_0^{\ell_{\e_2}} [\z(t)]_{\e_2}^{\prime} dx =- [\z(t)]_{\e_2}(0)
  = -1 \ \Rightarrow \ \gamma_2(t) = \alpha_2(T_1) - \frac{1}{\ell_{\e_2}} t,$$
  where $T_2$ is given by
  $$\alpha_1(T_1) + \frac{1}{\ell_{\e_1}}T_2 = \alpha_2(T_1) - \frac{1}{\ell_{\e_2}} T_2,$$
  that is,
  $$T_2 = \ell_{\e_1} \ell_{\e_2} \frac{\alpha_2(T_1) - \alpha_1(T_1)}{\ell_{\e_1}+ \ell_{\e_2}}.$$

  Consequently,  the solution $u(t)$ of the Cauchy problem \eqref{CP1} for the initial datum $u_0:= k
  \1_D$ is given by
     $$[u(t)]_{\e_1} = \left\{\begin{array}{ll}  \displaystyle\frac{t}{\ell_{\e_1}} \1_{]0,\ell_{\e_1}[},
     \quad \hbox{for} \ 0 \leq t \leq T_1, \\ \\  \displaystyle \frac{1}{\ell_{\e_1}}
     \left(\frac{ka(\ell_{\e_2}-a)}{2\ell_{\e_2}-a} + t \right)  \1_{]0,\ell_{\e_1}[} , \quad \hbox{for} \
     T_1 \leq t \leq T_2 \end{array} \right.$$
        and
         $$[u(t)]_{\e_2} = \left\{\begin{array}{ll} \displaystyle\left( k - \frac{2t}{a} \right)
         \1_{(0,a)} + \frac{t}{\ell_{\e_2} -a} \1_{]a,\ell_{\e_2}[}, \quad \hbox{for} \ 0 \leq t \leq T_1
         \\ \\  \left( k \frac{a}{2\ell_{\e_2}-a}  - \frac{t}{\ell_{\e_2}} \right)\1_{]0,\ell_{\e_2}[},
         \quad \hbox{for} \  T_1 \leq  t \leq T_2 \end{array} \right.$$
        where
        $$T_1 =  \frac{ka(\ell_{\e_2}-a)}{2\ell_{\e_2}-a}, \quad \hbox{and} \quad T_2 = \ell_{\e_1}
        \ell_{\e_2} \frac{\alpha_2(T_1) - \alpha_1(T_1)}{\ell_{\e_1}+ \ell_{\e_2}}.$$
Moreover, for $t \geq T_2$,

$$u(t) = \frac{T_1}{\ell_{\e_1}} =  \ell_{\e_2} \frac{\alpha_2(T_1) - \alpha_1(T_1)}{\ell_{\e_1}+
\ell_{\e_2}}.$$

}
\end{example}

\begin{remark}{\rm
Let us point out that in the above example, we see that  the solution  does not coincide with the solution
of the Neumann problem in each edge.  However, this happens if we consider that the total variation of a
function $u$ is given by $\vert Du  \vert (\Gamma)$, in which case it does not take into account the
structure of the metric graph.
$\blacksquare$}

\end{remark}

\begin{example}{\rm Consider the metric graph $\Gamma$ of the example \ref{example2}

\begin{center}
\begin{tikzpicture}
 \tikzstyle{gordito2} = [line width=3]

\node (v1) at (-5.1,-1.2) {};
\node (v5) at (2,-1.2) {};

\node[below] at (v1) {$\v_1$};
\node[above] at (5.3093,1.0278) {$\v_3$};
\node[below] at (v5) {$\v_2$};
\node[above] at (5.3843,-3.4747) {$\v_4$};

\draw[gordito2]  (-5.1,-1) node (v2) {} circle (0.05);
\draw[gordito2]  (5,1) node (v3) {} circle (0.05);
\draw[gordito2]  (2,-1) node (v4) {} circle (0.05);
\draw[gordito2]  (5,-3) node (v4) {} circle (0.05);

\draw[->,line width=1.2]  (-5.1,-1)--(-2,-1);
\draw[line width=1.2]  (-2,-1) --(2,-1);
\draw[line width=1.2] (2,-1)--(5,-3);
\draw[->,line width=1.2] (2,-1)--(4.253,-2.5103);

\draw[line width=1.2](2,-1)--(5,1);

\draw[->,line width=1.2](2,-1)--(4.1524,0.4299);

\node[below] at (-0.5,-0.1) {$c^{-1}_{\e_1}(a)$};
\node at (-1.3,-1.4) {$\e_1$};
\node at (4.2,0) {$\e_2$};
\node at (3.3,-2.3) {$\e_3$};
\draw[fill=black] (-0.5,-1) circle (2pt);
\end{tikzpicture}
\end{center}

Assume that $\ell := \ell_{\e_2} = \ell_{\e_3}$ and let $0 < a < \ell_{\e_1}$ such that  $a < 2 \ell$. We
are going to find the solution of the total variation flow for the initial datum $u_0:= k \1_D$, with $k
>0$ and $D:= (c_{\e_1}^{-1}(a), \v_2)$.

 We look for solutions of the form:
 $$[u(t)]_{\e_1} = \alpha_1(t) \1_{]0,a[} + \alpha_2(t)\1_{]a,\ell_{\e_1}[} \quad \alpha_1(0) =0,
 \alpha_2(0) =k,$$
 $$[u(t)]_{\e_2} =[u(t)]_{\e_3} =  \beta(t) \1_{(0,\ell)}, \quad \beta(0) =0,$$
 for all $0 < t \leq T_1$ such that $$\alpha_1(t) \leq \alpha_2(t), \quad \beta(t) \leq \alpha_2(t).$$
 Then, we need to find $\z(t) \in X_K(\Gamma)$, with $\Vert \z(t) \Vert_\infty \leq 1$, satisfying:
 \begin{equation}\label{NE1Imp}
 [u(t)]_{\e_1}^{\prime} = [\z(t)]_{\e_1}^{\prime},\quad [u(t)]_{\e_i}^{\prime} = [\z(t)]_{\e_i}^{\prime}
 i=2,3 \ \hbox{that is}
  \end{equation}
  $$\alpha^{\prime}_1(t) \1_{]0, a[} + \alpha^{\prime}_2(t)  \1_{]a, \ell_{\e_1}[} =
  [\z(t)]_{\e_1}^{\prime},\quad \beta^{\prime}(t) \1_{(0,\ell)} = [\z(t)]_{\e_i}^{\prime}, \  i=2,3.$$
 \begin{equation}\label{NE2Imp}
TV_\Gamma (u(t)) = \displaystyle\int_{\Gamma}   \z(t) Du(t)  -  \sum_{i=1}^3  [\z(t)]_{\e_i}
(\v_2)([u(t)]_{\e_i}(\v_2).
 \end{equation}
Now
$$Du(t) = (\alpha_2(t) - \alpha_1(t)) \delta_a,$$
hence
$$\int_{\Gamma}   \z(t) Du(t) = (\alpha_2(t) - \alpha_1(t))  [\z(t)]_{\e_1}(a).$$
 Since $\z(t) \in X_K(\Gamma)$,  $[\z(t)]_{\e_1} (\v_2) = -[\z(t)]_{\e_2} (\v_2) - [\z(t)]_{\e_3} (\v_2)$,
 thus
$$\sum_{i=1}^3  [\z(t)]_{\e_i} (\v_2)([u(t)]_{\e_i}(\v_2) =  [\z(t)]_{\e_1} (\v_2)(\alpha_2(t) -
\beta(t)).$$
Therefore, we can write \eqref{NE2Imp}  as
 \begin{equation}\label{NE3Imp}
TV_\Gamma (u(t)) =  (\alpha_2(t) - \alpha_1(t))  [\z(t)]_{\e_1}(a) -  [\z(t)]_{\e_1} (\v_2)(\alpha_2(t) -
\beta(t)).
 \end{equation}
 On the other hand,
 $$TV_\Gamma(u(t)) =   \sup \left\{ \displaystyle \left\vert \int_{\Gamma} u(t)(x) \w^{\prime}(x) dx
 \right\vert \ : \ \w \in X_K(\Gamma), \ \Vert \w \Vert_{L^\infty(\Gamma)} \leq 1 \right\} $$ $$= \sup
 \left\{ \displaystyle \left\vert  \sum_{i=1}^3  \int_0^{\ell_{\e_i}} [u(t)]_{\e_i}(x)
 [\w]_{\e_i}^{\prime}(x) dx \right\vert \ : \ \w \in X_K(\Gamma), \ \Vert \w \Vert_{L^\infty(\Gamma)} \leq
 1 \right\}.$$
 Now,
 $$ \sum_{i=1}^3  \int_0^{\ell_{\e_i}} [u(t)]_{\e_i}(x) [\w]_{\e_i}^{\prime}(x) dx =  \alpha_1(t) \int_0^a
 [\w]_{\e_1}^{\prime}(x) dx +\alpha_2(t) \int_a^{\ell_{\e_1}} [\w]_{\e_1}^{\prime}(x) dx $$
$$+\beta(t) \int_0^{\ell_{\e_2}} [\w]_{\e_2}^{\prime}(x) dx  + \beta(t) \int_0^{\ell_{\e_3}}
[\w]_{\e_3}^{\prime}(x) dx = (\alpha_1(t)-\alpha_2(t)) [\w]_{\e_1}(a)$$ $$+\alpha_2(t)) [\w]_{\e_1}(\v_2)+
\beta([\w]_{\e_2}(\v_2) + [\w]_{\e_3}(\v_2))$$ $$= (\alpha_1(t)-\alpha_2(t)) [\w]_{\e_1}(a) + (\alpha_2(t)
- \beta(t)) [\w]_{\e_1}(\v_2).$$
Thus
$$TV_\Gamma(u(t))$$ $$ =   \sup \left\{ \vert (\alpha_1(t)-\alpha_2(t)) [\w]_{\e_1}(a) + (\alpha_2(t) -
\beta(t)) [\w]_{\e_1}(\v_2) \vert \ : \ \w \in X_K(\Gamma), \ \Vert \w \Vert_{L^\infty(\Gamma)} \leq 1
\right\}$$ $$= (\alpha_2(t)-\alpha_1(t)) + (\alpha_2(t) - \beta(t)). $$
Then, if $[\z(t)]_{\e_1}(a)  =1$ and $[\z(t)]_{\e_1}(\v_2) = -1$,  \eqref{NE3Imp} holds.

We define

$$ [\z(t)]_{\e_1}(x):= \left\{ \begin{array}{ll}  \displaystyle\frac{x}{a}, \quad &\hbox{if} \ 0 \leq x
\leq a, \\[10pt] \displaystyle\frac{\ell_{\e_1}+ a - 2x}{\ell_{\e_1}- a }  \quad &\hbox{if} \ a \leq x
\leq \ell_{\e_1}.\end{array} \right.$$
Now, integrating in \eqref{NE1Imp}, we get
 $$a \alpha^{\prime}_1(t) = \int_0^a [\z(t)]_{\e_1}^{\prime}(x) dx = [\z(t)]_{\e_1}(a) =1 \ \Rightarrow \
 \alpha_1(t) = \frac{t}{a},$$
  $$\alpha^{\prime}_2(t)(\ell_{\e_1}-a) = \int_a^{\ell_{\e_i}} [\z(t)]_{\e_1}^{\prime}(x) dx =
  [\z(t)]_{\e_1}(\ell_{\e_1}) - [\z(t)]_{\e_1}(a) = - 2 \ \Rightarrow \  \alpha_2(t) = \left( k -
  \frac{2t}{\ell_{\e_1}-a} \right),$$

  $$\hbox{for $i =2,3$}, \ \beta^{\prime}(t)\ell_{\e_i} = \int_0^{\ell_{\e_i}} [\z(t)]_{\e_i}^{\prime}(x)
  dx = - [\z(t)]_{\e_i}(0) = [\z(t)]_{\e_i}(\v_2) = \frac12  \ \Rightarrow \  \beta(t) = \frac{t}{2
  \ell}.$$
   Consequently, the solution is given by
    $$[u(t)]_{\e_1} =  \frac{t}{a} \1_{]0,a[} +  \left( k - \frac{2t}{\ell_{\e_1}-a} \right)
    \1_{]a,\ell_{\e_1}[} \quad \hbox{for} \ 0 \leq t \leq T_1,$$
       and
         $$[u(t)]_{\e_2} = [u(t)]_{\e_3} =  \frac{t}{2 \ell}\1_{]0,\ell[}, \quad \hbox{for} \ 0 \leq t
         \leq T_1,$$
          where $$T_1 = \frac{k a (\ell_{\e_1}-a)}{\ell_{\e_1}+a},$$
          since we are  assuming that $a < 2 \ell$.

           Now, for $t > T_1$, we look for a solution of the form
           $$[u(t)]_{\e_1} =  \gamma_1(t)\1_{]0,\ell_{\e_1}[},$$
            $$[u(t)]_{\e_i} =  \gamma_2(t)\1_{]0,\ell_{\e_i}[} , \ i=2,3,$$
             with
            $$\gamma_1(T_1) = \frac{T_1}{a} = \left( k - \frac{2T_1}{\ell_{\e_1}-a} \right), \quad
            \gamma_2(T_1) =  \gamma_3(T_1) = \frac{T_1}{2 \ell}, \ i=2,3,$$
            such that $$\gamma_1(t) \geq \gamma_i(t),   \quad i=2,3., \quad \hbox{for} \ T_1 \leq t \leq
            T_2.$$
             Then, we need to find $\z(t) \in X_K(\Gamma)$, with $\Vert \z(t) \Vert_\infty \leq 1$,
             satisfying:
 \begin{equation}\label{NNE1ImpN}
 [u(t)]_{\e_1}^{\prime} = [\z(t)]_{\e_1}^{\prime},\quad [u(t)]_{\e_i}^{\prime} = [\z(t)]_{\e_i}^{\prime}
 i=2,3, \ \ \hbox{that is}
  \end{equation}
  $$\gamma^{\prime}_1(t) \1_{(0,  \ell_{\e_1})}  = [\z(t)]_{\e_1}^{\prime},\quad \gamma_i^{\prime}(t)
  \1_{(0,\ell)} = [\z(t)]_{\e_i}^{\prime}, \  i=2,3$$
  and
   \begin{equation}\label{NNE2ImpN}
TV_\Gamma (u(t)) = \displaystyle\int_{\Gamma}   \z(t) Du(t)  -  \sum_{i=1}^3  [\z(t)]_{\e_i}
(\v_2)[u(t)]_{\e_i}(\v_2).
 \end{equation}
  Now $Du(t) = 0,$
hence
$$\int_{\Gamma}   \z(t) Du(t) = 0.$$
Since $\z(t) \in X_K(\Gamma)$, we have
$$ -  \sum_{i=1}^3  [\z(t)]_{\e_i} (\v_2)[u(t)]_{\e_i}(\v_2) = - [\z(t)]_{\e_i} (\v_2)(\gamma_1(t) -
\gamma_2(t)).$$
On the other hand,
 $$TV_\Gamma(u(t)) = \sup \left\{ \displaystyle \left\vert  \sum_{i=1}^3  \int_0^{\ell_{\e_i}}
 [u(t)]_{\e_i}(x) [\w]_{\e_i}^{\prime}(x) dx \right\vert \ : \ \w \in X_K(\Gamma), \ \Vert \w
 \Vert_{L^\infty(\Gamma)} \leq 1 \right\}.$$
 Now,
 $$ \sum_{i=1}^3  \int_0^{\ell_{\e_i}} [u(t)]_{\e_i}(x) [\w]_{\e_i}^{\prime}(x) dx $$ $$=  \gamma_1(t)
 \int_0^{\ell_{\e_1}} [\w]_{\e_1}^{\prime}(x) dx
+\gamma_2(t) \int_0^{\ell_{\e_2}} [\w]_{\e_2}^{\prime}(x) dx  + \gamma_3(t) \int_0^{\ell_{\e_3}}
[\w]_{\e_3}^{\prime}(x) dx $$ $$=  \gamma_1(t)[\w]_{\e_1}(\ell_{\e_1}) - \gamma_2(t) [\w]_{\e_2}(0) -
\gamma_3(t) [\w]_{\e_3}(0 $$ $$=  \gamma_1(t)[\w]_{\e_1}(\v_2) + \gamma_2(t) [\w]_{\e_2}(\v_2) +
\gamma_3(t) [\w]_{\e_3}(\v_2)=  (\gamma_1(t) -  \gamma_2(t))[\w]_{\e_1}(\v_2).$$
Hence,
$$TV_\Gamma(u(t)) = (\gamma_1(t) -  \gamma_2(t)).$$
Therefore, \eqref{NNE2ImpN} holds , if $[\z(t)]_{\e_1}(\v_2) = -1$.

Now, integrating \eqref{NNE1ImpN}, for $T_1 \leq t \leq T_2$, we have
$$\gamma_1(t)^{\prime} \ell_{\e_1} = \int_0^{\ell_{\e_1}} [\z(t)]^{\prime}_{\e_1} (x)dx =
[\z(t)]_{\e_1}(\v_2) = -1 \ \Rightarrow \ \gamma_1(t) = \frac{T_1}{a} - \frac{t}{\ell_{\e_1}},$$

$$\hbox{for $i =2,3$}, \ \gamma_i(t)^{\prime} \ell_{\e_i} = \int_0^{\ell_{\e_i}} [\z(t)]^{\prime}_{\e_i}
(x)dx = [\z(t)]_{\e_i}(\v_2) = \frac12 \ \Rightarrow \ \gamma_i(t) = \frac{T_1}{2 \ell} +
\frac{t}{2\ell}.$$
Consequently, the solution is given by
    $$[u(t)]_{\e_1} = \frac{T_1}{a} - \frac{t}{\ell_{\e_1}} \1_{]0,\ell_{\e_1}[} \quad \hbox{for} \ \leq
    T_1 \leq t \leq T_2,$$
       and
         $$[u(t)]_{\e_2} = [u(t)]_{\e_3} =  \frac{T_1}{2 \ell} + \frac{t}{2\ell}, \quad \hbox{for} \  \leq
         T_1 \leq t \leq T_2,$$

where
$$T_2 = T_1 \frac{(2\ell-a)\ell_{\e_1}}{a(\ell_{\e_1} +2\ell)}$$

For $t \geq T_2$, we have
$$u(t) = \frac{T_1}{a} - \frac{T_2}{\ell_{\e_1}} = T_1 \left(\frac{1}{a} - \frac{(2\ell-a)}{a(\ell_{\e_1}
+2\ell)}  \right) = T_1 \frac{\ell_{\e_1}+a}{(\ell_{\e_1} +2\ell)} = k \frac{\ell_{\e_1}-a}{(\ell_{\e_1}
+2\ell)} = \overline{u_0}.$$

          }

\end{example}

\noindent {\bf Acknowledgment.}  The author is grateful to  Wojciech G\'{o}rny, Delio Mugnolo  and Juli\'{a}n  Toledo for
stimulating discussions on this paper. The author have been partially supported  by the Spanish MCIU and
FEDER, project PGC2018-094775-B-100.

\bigskip


\begin{thebibliography}{00}


\bibitem{AFP}
{L. Ambrosio, N. Fusco and D. Pallara, }
\newblock {Functions of Bounded Variation
and Free Discontinuity Problems}.
\newblock Oxford Mathematical Monographs, 2000.

\bibitem{ACMBook}
F.~Andreu, V.~Caselles, and J.M. Mazon, \textit{Parabolic Quasilinear Equations
Minimizing
 Linear Growth Functionals},   Progress in Mathematics, vol. 223, 2004. Birkhauser.


\bibitem{ABCM0}
 F. Andreu, C. Ballester, V. Caselles and J. M. Maz\'on,
Minimizing Total Variation Flow,
\textit{Diff. and Int. Eq.} {\bf 14} (2001), 321-360.
		
\bibitem{ABCM} F. Andreu, C. Ballester, V. Caselles, and J. M.Maz\'on, {\it The Dirichlet Problem for the
    Total Variation Flow,} J. Funct. Anal. {\bf 180} (2001), 347--403.

\bibitem{Anzellotti} G. Anzellotti,
\newblock {\it Pairings Between Measures and Bounded Functions
and Compensated Compactness},
\newblock Ann. di Matematica Pura ed Appl. IV (135)
(1983), 293-318.


\bibitem{Barbu} V. Barbu, {\it Nonlinear Differential equations of Monotone Type in Banach Spaces}.
    Springer Monographs in Mathematics, 2010.
%
pp. 41--75.


\bibitem{BK} G. Berkolaiko and P. Kuchment.
	  Introduction to quantum graphs.
	Mathematical Surveys and Monographs, 186.
	American Mathematical Society, Providence, RI, 2013. xiv+270 pp.

\bibitem{BF} M. Bonforte and A. Figalli, {\it Total Variation Flow and Sign fast Diffusion in one
    dimension}. J. Differential Equations {\bf 252} (2012), 4455-4480.

 \bibitem{Brezis}
H. Brezis,
\newblock {Operateurs Maximaux Monotones}.
\newblock North Holland, Amsterdam, 1973.

\bibitem{BB} L. Bungert and M.  Burger, {\it Asymptotic Profiles of Nonlinear Homogeneous
    Evolution.} Journal of Evolution Equations {\bf 20} (2020),  1061–1092.

  \bibitem{BBChN} L. Bungert, M. Burger, A. Chambolle and M. Novaga, {\it  Nonlinear Spectral
      Decompositions by Gradient Flows of
One-Homogeneous Functionals}. Anal. PDE {\bf 14} (2021),  823--860.



\bibitem{Bruck} R.E. Bruck Jr., {\it Asymptotic convergence of nonlinear contraction semigroups in Hilbert
    spaces}. Journal of Functional Analysis {\bf 18} (1975), 15-26.

\bibitem{CrandallLiggett}
M. G. Crandall and T. M. Liggett,
{\it Generation of Semigroups of Nonlinear
Transformations on General Banach Spaces},
 Amer. J. Math. {\bf 93} (1971), 265--298.


\bibitem{DZ}  R. D\'{a}ger and E. Zuazua, {\it  Wave propagation, observation and control in 1-d flexible
    multistructures}, volume {\bf 50} of Math\'{e}matiques and Applications. Springer-Verlag, Berlin,
    2006.



    \bibitem{DLPZ}  Y. Du, B. Lou, R. Peng and M. Zhou, {\it  The Fisher-KPP equation over simple graphs:
        Varied
persistence states in river networks.} Journal of Mathematical Biology, {\bf 80} (2020), 1559--1616.



    \bibitem{JPS} Y. Jin, R. Peng and J. Shi, {\it  Population dynamics in river networks.} Journal of
        Nonlinear
Science, {\bf 29} ( 2019), 2501--2545.


   \bibitem{Lumer} G. Lumer, {\it  Connecting of local operators and evolution equations on networks.} In
       Potential theory, Copenhagen 1979 (Proc.
Colloq., Copenhagen, 1979), volume 787 of Lecture Notes in Math., pages 219–234. Springer, Berlin, 1980.

   \bibitem{MST1} J.M. Maz\'{o}n, M. Solera and J. Toledo, {\it The total variation flow in metric randon
       walk spaces}. Cal. Var. PDE. (2020) 59:29 doi/10.1007/s00526-019-1684-z.

     \bibitem{Meyer} Y. Meyer, {\it Oscillating patterns in image processing and nonlinear evolution
         equations,} University lecture Series, 22. American Mathematical Society, Providance, RI, 2001.

   \bibitem{Mugnolo}  D. Mugnolo, {\it Semigroup methods for evolution equations on networks. }
       Understanding Complex Systems. Springer, Cham, 2014.

     \bibitem{Noja}      D. Noja, {\it Nonlinear Schr\"{o}dinger equation on graphs: recent results and
         open problems.} Philos.
Trans. R. Soc. Lond. Ser. A Math. Phys. Eng. Sci., {\bf  372} (2007):20130002, 20, 2014.

\end{thebibliography}
 \end{document}